\newfont{\cyr}{wncyr10 scaled 1100}
\newfont{\cyrr}{wncyr9 scaled 1000}
\theoremstyle{plain}
\newtheorem{theorem}{Theorem}[section]
\newtheorem{proposition}[theorem]{Proposition}
\newtheorem{lemma}[theorem]{Lemma}
\newtheorem{corollary}[theorem]{Corollary}
\newtheorem{introthm}{Theorem}
\newtheorem{introconj}[introthm]{Conjecture}
\newtheorem{introcor}[introthm]{Corollary}
\theoremstyle{remark}
\theoremstyle{definition}
\newtheorem{definition}[theorem]{Definition}
\newtheorem*{ass}{Assumption}
\newtheorem*{assSU}{Assumption}
\theoremstyle{remark}
\newtheorem{remark}[theorem]{Remark}
\newcommand{\sk}{\vspace{0.1in}}
\newcommand{\Q}{\mathbf Q}
\newcommand{\Z}{\mathbf Z}
\newcommand{\bQ}{\mathbf Q}
\newcommand{\bC}{\mathbf C}
\DeclareMathOperator{\Hom}{Hom}
\DeclareMathOperator{\Gal}{Gal}
\DeclareMathOperator{\GL}{\rm GL}
\DeclareMathOperator{\M}{M}
\newcommand{\val}{\mathrm{val}}
\newcommand{\ord}{\mathrm{ord}}
\newcommand{\new}{\mathrm{new}}
\definecolor{Indigo}{rgb}{0.2,0.1,0.7}
\definecolor{Violet}{rgb}{0.5,0.1,0.7}
\definecolor{White}{rgb}{1,1,1}
\definecolor{Green}{rgb}{0.1,0.9,0.2}
\newcommand{\mat}[4]{\left(\begin{array}{cc}#1&#2\\#3&#4\end{array}\right)}
\newcommand{\smallmat}[4]{\bigl(\begin{smallmatrix}#1&#2\\#3&#4\end{smallmatrix}\bigr)}
\newcommand{\pwseries}[1]{[[#1]]}
\newcommand{\cO}{{\mathcal O}}
\newcommand{\F}{\mathbf f}
\def\SU{(SU)}
\begin{document}

\title[Anticyclotomic Iwasawa invariants in Hida families]{Variation of anticyclotomic Iwasawa invariants\\ in Hida families}
\author[F.~Castella]{Francesc Castella}
\author[C.-H.~Kim]{Chan-Ho Kim}
\author[M.~Longo]{Matteo Longo}
\address{Department of Mathematics, Princeton University, Fine Hall, Princeton, NJ 08544, USA}
\email{fcabello@math.princeton.edu}
\address{School of Mathematics, Korea Institute for Advanced Study (KIAS), 85 Hoegiro, Dongdaemun-Gu, Seoul 02455, Republic of Korea}
\email{chanho.math@gmail.com}
\address{Dipartimento di Matematica, Universit\`a di Padova, Via Trieste 63, 35121 Padova, Italy}
\email{mlongo@math.unipd.it}

\subjclass[2010]{11R23 (Primary); 11F33 (Secondary)}
\maketitle

\begin{abstract}
Building on the construction of big Heegner points in the quaternionic setting \cite{LV-MM},
and their relation to special values of Rankin--Selberg $L$-functions \cite{cas-longo}, we obtain 
anticyclotomic analogues of the results of Emerton--Pollack--Weston \cite{EPW} on the variation of Iwasawa
invariants in Hida families. In particular, combined with the known cases of the 
anticyclotomic Iwasawa main conjecture in weight $2$, our results yield a proof of the main conjecture
for $p$-ordinary newforms of higher weights and trivial nebentypus.
\end{abstract}

\maketitle
\tableofcontents

\section*{Introduction}

In a remarkable paper \cite{EPW}, Emerton--Pollack--Weston obtained striking results on
the behaviour of the cyclotomic Iwasawa invariants attached to $p$-ordinary modular forms
as they vary in Hida families. In particular, combined with Greenberg's conjecture on the vanishing of
the $\mu$-invariant, 
their main result 
\emph{reduces} the proof of the main conjecture to the weight two case.
In this paper, we develop analogous results for newforms based-changed
to imaginary quadratic fields in the definite anticyclotomic setting.
In particular, combined with Vatsal's result on the vanishing of the anticyclotomic $\mu$-invariant \cite{Vat1},
and the known cases of the anticyclotomic main conjecture in weight $2$
(thanks to the works of Bertolini--Darmon \cite{bdIMC},
Pollack--Weston \cite{pollack-weston}, and Skinner--Urban \cite{SU}),
our results yield a proof of Iwasawa's main conjecture for $p$-ordinary modular forms
of higher weights $k\geqslant 2$
and trivial nebentypus in the anticyclotomic setting.
\sk

Let us begin by recalling the setup of \cite{EPW}, but adapted to the
context at hand. Let
\[
\bar\rho:G_\Q:={\rm Gal}(\overline{\Q}/\Q)\longrightarrow{\rm GL}_2(\mathbf{F})
\]
be a continuous Galois representation defined over a finite field $\mathbf{F}$ of characteristic $p>3$,
and assume that $\bar{\rho}$ is odd and irreducible.
After the proof of Serre's conjecture \cite{kw}, we know that $\bar\rho$ is modular,
meaning that $\bar\rho$ is isomorphic to the mod $p$ Galois representation $\bar{\rho}_{f_0}$ 
associated to an elliptic newform $f_0$. 
Throughout this paper, it will be assumed that $\bar\rho\simeq\bar{\rho}_{f_0}$ for some newform $f_0$ of weight $2$ and trivial nebentypus.
\sk

Let $N(\bar\rho)$ be the tame conductor of $\bar\rho$, and let $K/\Q$ be an imaginary quadratic field
of discriminant prime $-D_K<0$ to $pN(\bar\rho)$. The field $K$ then determines a decomposition
\[
N(\bar{\rho})=N(\bar\rho)^+\cdot N(\bar\rho)^-
\]
with $N(\bar\rho)^+$ (resp. $N(\bar\rho)^-$)
only divisible by primes which are split (resp. inert) in $K$. We similarly define
the decomposition $M=M^+\cdot M^-$ for any positive integer $M$ prime to $D_K$.
\sk

As in \cite{pollack-weston},
we consider the following conditions on a pair $(\bar\rho,N^-)$, where $N^-$ is a fixed
square-free product of an odd number of primes inert in $K$:

\begin{ass}[CR]\label{CR}\hfill
\begin{enumerate}
\item{} $\bar{\rho}$ is irreducible; 
\item{} $N(\bar\rho)^-\mid N^-$;
\item{} $\bar{\rho}$ is ramified at every prime $\ell\mid N^-$ such that $\ell\equiv\pm{1}\pmod{p}$.
\end{enumerate}
\end{ass}

Let $\mathcal{H}(\bar{\rho})$ be the set of all $p$-ordinary and $p$-stabilized
newforms with mod $p$ Galois representation isomorphic to $\bar\rho$, and let
$\Gamma:={\rm Gal}(K_\infty/K)$ denote the Galois group of the anticyclotomic $\Z_p$-extension of $K$.
Associated with each $f\in\mathcal{H}(\bar\rho)$ of tame level $N_f$ with $N_f^-=N^-$,
defined over say a finite extension $F/\Q_p$ with ring of integers $\cO$, there is a $p$-adic $L$-function
\[
L_p(f/K)\in\cO\pwseries{\Gamma}
\]
constructed by Bertolini--Darmon \cite{BDmumford-tate} in weight two, and
by Chida--Hsieh \cite{ChHs1} for higher weights. The $p$-adic $L$-function $L_p(f/K)$ is characterized,
as $\chi$ runs over the $p$-adic characters of $\Gamma$ corresponding to certain algebraic Hecke characters of $K$,
by an interpolation property of the form
\begin{equation}\label{eq:interp}
\chi(L_p(f/K))=C_p(f,\chi)
\cdot E_p(f,\chi)\cdot\frac{L(f,\chi,k/2)}{\Omega_{f,N^-}},\nonumber
\end{equation}
where $C_p(f,\chi)$ is an explicit nonzero constant, $E_p(f,\chi)$ is a $p$-adic multiplier,
and $\Omega_{f,N^-}$ is a complex period 
making the above ratio algebraic. (Of course, implicit in all the above is a fixed
choice of complex and $p$-adic embeddings $\mathbf{C}\overset{\iota_\infty}
\hookleftarrow\overline\Q\overset{\iota_p}\hookrightarrow\overline\Q_p$.)
\sk

The anticyclotomic Iwasawa main conjecture gives an arithmetic interpretation of $L_p(f/K)$.
More precisely, let
\[
\rho_f:G_{\bQ}\longrightarrow{\rm Aut}_F(V_f)\simeq{\rm GL}_2(F)
\]
be a self-dual twist of the $p$-adic Galois representation associated to $f$, fix an $\cO$-stable lattice $T_f\subseteq V_f$,
and set $A_f:=V_f/T_f$. Let $D_p\subseteq G_\bQ$ be the decomposition group corresponding to our fixed
embedding $\iota_p$, and let $\varepsilon_{\rm cyc}$ be the $p$-adic
cyclotomic character. Since $f$ is $p$-ordinary, there is a unique
one-dimensional $D_p$-invariant subspace
$F_p^+V_f\subseteq V_f$ where the inertia group at $p$ acts via $\varepsilon_{\rm cyc}^{k/2}\psi$,
with $\psi$ a finite order character.
Let $F_p^+A_f$ be the image of $F_p^+V_f$ in $A_f$ and set $F_p^-A_f:=A_f/F_p^+A_f$.
Following the terminology in \cite{pollack-weston}, the \emph{minimal Selmer group} of $f$ is defined by
\[
{\rm Sel}(K_\infty,f):=\ker\left\{H^1(K_\infty,A_f)
\longrightarrow
\prod_{w\nmid p}H^1(K_{\infty,w},A_f)\times\prod_{w\vert p}H^1(K_{\infty,w},F_p^-A_f)\right\},
\]
where $w$ runs over the places of $K_\infty$. By standard arguments (see \cite{greenberg-Iw}, for example),
one knows that the Pontryagin dual of ${\rm Sel}(K_\infty,f)$ is finitely generated
over the anticyclotomic Iwasawa algebra $\Lambda:=\cO[[\Gamma]]$. 
The \emph{anticyclotomic main conjecture} is then the following:

\begin{introconj}
\label{AIMC}
The Pontryagin dual ${\rm Sel}(K_\infty,f)^\vee$ is $\Lambda_{}$-torsion, and
\[
Ch_{\Lambda_{}}({\rm Sel}(K_\infty,f)^\vee)=(L_p(f/K)).
\]
\end{introconj}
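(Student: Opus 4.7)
The plan is to adapt the strategy of Emerton--Pollack--Weston \cite{EPW} to the anticyclotomic setting, using the big Heegner point construction of \cite{LV-MM} and its relation to $p$-adic $L$-values established in \cite{cas-longo}. I would first establish the conjecture for a single well-chosen form $f_{0}\in\mathfrak{H}(\bar\rho)$, and then propagate it to every other member of the Hida family by comparing algebraic and analytic Iwasawa invariants across congruences.

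The $\Lambda$-torsionness of $\mathrm{Sel}(K_\infty,f)^\vee$ is essentially standard under condition (CR): it follows from Greenberg-type control theorems once the characteristic element is related, at every arithmetic specialization of $\Gamma$, to a nonzero classical central $L$-value, which is guaranteed by the definite sign forced by the choice of $N^-$. For the base case, pick $f_0$ of weight $2$. On one side, Bertolini--Darmon-style Eisenstein congruences on the definite quaternion algebra associated to $N^-$ yield the divisibility $L_p(f_0/K)\in\mathrm{Ch}_{\Lambda}(\mathrm{Sel}(K_\infty,f_0)^\vee)$; on the other side, the Euler system built from CM points on the relevant Shimura set, together with Kolyvagin-type arguments as in \cite{pollack-weston}, yields the reverse divisibility. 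Condition (CR) is used crucially in both steps to ensure freeness of Hecke modules and the non-triviality of the derived Kolyvagin classes.

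For the propagation step, set $\mu^{\mathrm{alg}}(f),\lambda^{\mathrm{alg}}(f)$ to be the invariants of $\mathrm{Ch}_{\Lambda}(\mathrm{Sel}(K_\infty,f)^\vee)$ and $\mu^{\mathrm{an}}(f),\lambda^{\mathrm{an}}(f)$ those of $L_p(f/K)$. The big Heegner class in the Hida family, together with the associated two-variable $p$-adic $L$-function specializing to $L_p(f/K)$, provides the link between the two sides as $f$ varies. The first sub-step is to show $\mu^{?}(f)=0$ if and only if $\mu^{?}(f_0)=0$ for $?\in\{\mathrm{alg},\mathrm{an}\}$, using the residual Selmer group attached to $\bar\rho$ and the identity $\bar\rho_f\cong\bar\rho_{f_0}=\bar\rho$. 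Assuming both $\mu$'s vanish, the second sub-step is to prove parallel comparison formulas
\[
\lambda^{\mathrm{alg}}(f)-\lambda^{\mathrm{alg}}(f_0)=\sum_{\ell}\delta_\ell^{\mathrm{alg}},\qquad \lambda^{\mathrm{an}}(f)-\lambda^{\mathrm{an}}(f_0)=\sum_{\ell}\delta_\ell^{\mathrm{an}},
\]
where the sums run over primes at which the tame levels of $f$ and $f_0$ differ, and to verify that $\delta_\ell^{\mathrm{alg}}=\delta_\ell^{\mathrm{an}}$ prime-by-prime. Combined with the main conjecture at $f_0$, this yields it at $f$.

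The hard part will be the propagation: (a) proving that $\mu^{\mathrm{an}}(f_0)=0$, equivalently that the theta element survives modulo $p$, reduces to a non-vanishing statement for the residual big Heegner class and requires multiplicity-one in the Jacquet--Langlands transfer to the definite quaternion algebra, which in turn depends on (CR); and (b) matching the local correction terms $\delta_\ell^{\mathrm{alg}}$ and $\delta_\ell^{\mathrm{an}}$ at primes of level raising demands, on the algebraic side, a careful analysis of the local conditions via Tate duality and the geometry of the ordinary filtration, and on the analytic side, a precise reading of the interpolation formula \eqref{eq:interp} together with the compatibility of the big Heegner element with $\ell$-stabilization. Once these two ingredients are in place, the argument mirrors \cite{EPW} with the cyclotomic $\Z_p$-extension replaced by the anticyclotomic one and modular symbols replaced by definite quaternionic automorphic forms.
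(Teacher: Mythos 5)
You are treating the statement as a theorem, but in the paper it is precisely a \emph{conjecture}: the paper never proves it. What the paper proves is Theorem A (vanishing of $\mu$ and the $\lambda$-comparison formula across the Hida family) and Corollary A, a \emph{conditional propagation} statement: if the anticyclotomic main conjecture holds for a single $f_0\in\mathcal H(\bar\rho)$ of weight $\equiv 2\pmod{p-1}$ and trivial nebentypus, and if moreover $p$ splits in $K$, then it holds for all such forms in $\mathcal H(\bar\rho)$. Your ``propagation step'' is essentially the paper's actual content (comparison of $\mu$- and $\lambda$-invariants via the two-variable $p$-adic $L$-function built from big Heegner points, residual Selmer groups, and the matching $\delta_\ell=e_\ell$ of local terms, then Weierstrass preparation plus the Skinner--Urban divisibility). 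But your ``base case'' is where the argument genuinely breaks down.

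Concretely: the two tools you invoke for the base case prove the \emph{same} divisibility, not opposite ones. The Bertolini--Darmon congruence/Heegner-point argument \cite{bdIMC}, as refined by Pollack--Weston and Chida--Hsieh \cite{pollack-weston,ChHs2}, and any Kolyvagin-type Euler system built from CM points, all bound the Selmer group by the $L$-function, i.e.\ they give $Ch_{\Lambda}({\rm Sel}(K_\infty,f_0)^\vee)\supseteq (L_p(f_0/K))$ (and even this is currently known only for weights $k\le p-2$, because of the reliance on Ihara's lemma). The reverse inclusion $(L_p(f_0/K))\supseteq Ch_{\Lambda}({\rm Sel}(K_\infty,f_0)^\vee)$ is the deep input that cannot be obtained from Heegner points or Kolyvagin at all; in the paper it comes from Skinner--Urban's Eisenstein-congruence work on ${\rm U}(2,2)$ \cite{SU}, which requires $p$ split in $K$, weight $\equiv 2\pmod{p-1}$ and trivial nebentypus, and which only yields one divisibility of a three-variable main conjecture that must then be specialized to the anticyclotomic line. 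Even after assembling both divisibilities for $f_0$ one does not get the conjecture unconditionally with the generality stated (arbitrary $f\in\mathfrak H(\bar\rho)$, no splitting hypothesis on $p$); this is exactly why the paper states the result only as Corollary A, conditional on the main conjecture holding for one member of the family. So your outline, corrected, reproduces the paper's Theorem A and Corollary A, but it does not (and cannot, with the cited inputs) prove Conjecture 1 itself.
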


For newforms $f$ of weight $2$ corresponding to elliptic curves $E/\bQ$ with ordinary reduction at $p$,
and under rather stringent assumptions on $\bar\rho_f$ which were later relaxed
by Pollack--Weston \cite{pollack-weston}, one of the divisibilities predicted by Conjecture~\ref{AIMC}
was obtained by Bertolini--Darmon \cite{bdIMC} using Heegner points and
Kolyvagin's method of Euler systems. 
More recently, after the work of Chida--Hsieh \cite{ChHs2}
the divisibility
\begin{equation}\label{ES-div}
Ch_{\Lambda_{}}({\rm Sel}(K_\infty,f)^\vee)\supseteq(L_p(f/K))\nonumber
\end{equation}
is known for newforms $f$ of weight $k\leqslant p-2$ and trivial nebentypus,
provided the pair $(\bar{\rho}_f,N_f^-)$ satisfies a mild strengthening of Hypotheses~(CR). 
This restriction to small weights comes from the use of
Ihara's lemma \cite{DT-inv}, and it seems difficult to directly extend their arguments
in \cite{ChHs2} to higher weights. Instead, as we shall explain in the following paragraphs,
in this paper we will complete the proof of Conjecture~\ref{AIMC} to all weights $k\equiv 2\pmod{p-1}$ 
by a different approach, using Howard's big Heegner points in Hida families \cite{howard-invmath}, as
extended by Longo--Vigni \cite{LV-MM} to quaternionic Shimura curves.
\sk

Associated with every $f\in\mathcal{H}(\bar\rho)$ there are \emph{anticyclotomic Iwasawa invariants}
$\mu^{\rm an}(K_\infty,f)$, $\lambda^{\rm an}(K_\infty,f)$, $\mu^{\rm alg}(K_\infty,f)$, and $\lambda^{\rm alg}(K_\infty,f)$.
The analytic (resp. algebraic) $\lambda$-invariants are the
number of zeros of $L_p(f/K)$ (resp. of a generator of the characteristic ideal of ${\rm Sel}(K_\infty,f)^\vee$), while
the $\mu$-invariants are defined as the exponent of the highest power of $\varpi$ (with $\varpi\in\cO$ any uniformizer)
dividing the same objects.
Our main results on the variation of these invariants are summarized in the following. (Recall that we assume $\bar\rho\simeq\bar{\rho}_{f_0}$ for some newform $f_0$ of weight $2$ and trivial nebentypus.)

\begin{introthm}\label{thmA}
Assume in addition that: 
\begin{itemize}
\item{} $\bar\rho$ is irreducible;
\item{} $\bar\rho$ is $p$-ordinary, ``non-anomalous" and $p$-distinguished:
\[
\bar{\rho}\vert_{D_p}\simeq\left(\begin{smallmatrix}\bar{\varepsilon}&*\\0&\bar{\delta}\end{smallmatrix}\right)
\]
with $\bar{\varepsilon}, \bar{\delta}:D_p\rightarrow\mathbf{F}^\times$ characters such that $\bar{\delta}$
is unramified, $\bar{\delta}({\rm Frob}_p)\neq\pm 1$
and $\bar\delta\neq\bar{\varepsilon}$;
\item{} $N(\bar\rho)^-$ is the square-free product of an odd number of primes.
\end{itemize}
Let $\mathcal{H}^-(\bar\rho) := \mathcal{H}^{N(\bar\rho)^-}(\bar{\rho})$ consist of
all newforms $f\in\mathcal{H}(\bar{\rho})$ with $N_f^-=N(\bar\rho)^-$, and fix $*\in\{{\rm alg}, {\rm an}\}$. Then the following hold:
\begin{enumerate}
\item For all $f\in\mathcal{H}^-(\bar\rho)$, we have
\[
\mu^*(K_\infty,f)=0.
\]
\item Let $f_1, f_2\in\mathcal{H}^-(\bar\rho)$ lie on the branches
$\mathbb{T}(\mathfrak{a}_1)$, $\mathbb{T}(\mathfrak{a}_2)$ (defined in $\S$\ref{sec:branches}), respectively. Then
\begin{equation}\label{lambda}
\lambda^*(K_\infty,f_1)-\lambda^*(K_\infty,f_2)=\sum_{\ell\mid N^+_{f_1}N^+_{f_2}}
e_\ell(\mathfrak{a}_2)-e_\ell(\mathfrak{a}_1),\nonumber
\end{equation}
where the sum is over the split primes in $K$ which divide the tame level of $f_1$ or $f_2$,
and $e_\ell(\mathfrak{a}_j)$ is an explicit non-negative invariant of the branch $\mathbb{T}(\mathfrak{a}_j)$
and the prime $\ell$.
\end{enumerate}
\end{introthm}
Provided that $p$ splits in $K$, and under the same hypotheses on $\bar\rho$ as in Theorem~\ref{thmA},
the work of Skinner--Urban \cite{SU} establishes one of the divisibilities in a related
``three-variable'' Iwasawa main conjecture. Combining their work with our Theorem~\ref{thmA},
and making use of the aforementioned results of Bertolini--Darmon \cite{bdIMC} and
Pollack--Weston \cite{pollack-weston} in weight $2$, we obtain many new cases of Conjecture~\ref{AIMC}
(\emph{cf.} Corollary \ref{coro:5.4}):

\begin{introcor}\label{corA}
Suppose that $\bar\rho$ is as in Theorem$~\ref{thmA}$ and that $p$ splits in $K$.
Then the anticyclotomic Iwasawa main conjecture holds for every $f\in\mathcal{H}^-(\bar\rho)$
of weight $k\equiv 2\pmod{p-1}$ and trivial nebentypus.
\end{introcor}

Let us briefly explain the new ingredients in the proof of Theorem~\ref{thmA}.
As it will be clear to the reader, the results contained in Theorem~\ref{thmA}
are anticyclotomic analogues of the results
of Emerton--Pollack--Weston \cite{EPW} in the cyclotomic setting. In fact, on the algebraic side the arguments of
\emph{loc.cit.} carry over almost verbatim,
and our main innovations in this paper are in the development of anticyclotomic analogous of their results on the \emph{analytic} side.
Indeed, the analytic results of \cite{EPW} are based on the study of certain two-variable
$p$-adic $L$-functions \`a la Mazur--Kitagawa, whose construction relies on the theory of modular symbols
on classical modular curves. In contrast, 
we need to work on a family of Shimura curves associated with definite quaternion algebras,
for which cusps are not available. In the cyclotomic case, modular symbols are useful two ways: They
yield a concrete realization of the degree one compactly supported cohomology of open modular curves,
and provide a powerful tool for studying the arithmetic properties of critical values
of the $L$-functions attached to modular forms. Our basic observation is that in the present anticyclotomic setting,
Heegner points on definite Shimura curves 
provide a similarly convenient way of describing the \emph{central} critical values of the
Rankin $L$-series $L(f/K,\chi,s)$.
\sk

Also fundamental for the method of \cite{EPW} is the possibility to ``deform''
modular symbols in Hida families. In our anticyclotomic context, the construction of big
Heegner points in Hida families was obtained in the work  \cite{LV-MM} of the third-named author
in collaboration with Vigni, 
while the relation between these points and Rankin--Selberg $L$-values
was established in the work \cite{cas-longo} by the first- and third-named authors.
With these key results at hand, and working over appropriate quotients of the Hecke algebras considered in
\cite{EPW} via the Jaquet--Langlands correspondence, we are then able to develop analogues of the arguments of \emph{loc.cit.}
in our setting, making use of the ramification hypotheses on $\bar\rho$
to ensure a multiplicity one property of certain Hecke modules, 
similarly as in the works of Pollack--Weston \cite{pollack-weston} and the second-named author \cite{kim-summary}.
\sk

We conclude this Introduction with an overview of the contents of the paper.
In the next section, we briefly recall the Hida theory that we need, following the exposition in \cite[\S{1}]{EPW} for the most part.
In Section~\ref{sec:heegner}, we describe a key extension of the construction
of big Heegner points of \cite{LV-MM} to ``imprimitive'' branches of the Hida family.
In Section~\ref{sec:p-adicL}, we construct two-variable
$p$-adic $L$-functions attached to a Hida family and to each of its irreducible components (or branches), and prove Theorem~\ref{thm:3.6.2}
relating the two. This theorem is the key technical result of this paper, and the analytic part of Theorem~\ref{thmA}
follows easily from this. In Section~\ref{sec:Selmer}, we deduce the algebraic part of Theorem~\ref{thmA}
using the residual Selmer groups studied in \cite[\S{3.2}]{pollack-weston}. Finally, in Section~\ref{sec:applications}
we give the applications of our results to the anticyclotomic Iwasawa main conjecture.
\sk

\emph{Acknowledgements.} During the preparation of this paper, F.C. was partially supported 
by the European Research Council (ERC) under the European Union's
Horizon 2020 research and innovation programme (grant agreement No. 682152); C.K. was partially supported by an AMS--Simons Travel Grant;
M.L. was partially supported by PRIN 2010-11 ``Arithmetic Algebraic Geometry and Number Theory'' and
by PRAT 2013 ``Arithmetic of Varieties over Number Fields''.

\section{Hida theory}\label{sec:hida-theory}

Throughout this chapter, we fix a positive integer $N$ admitting a factorization
\[
N=N^+ N^-
\] 
with $(N^+,N^-)=1$ and $N^-$ equal to the square-free product of an \emph{odd} number of primes.
We also fix a prime $p\nmid 6N$.

\subsection{Hecke algebras}\label{subsec:hecke}

For each integer $k\geqslant 2$, denote by $\mathfrak{h}_{N,r,k}$ the $\Z_p$-algebra
generated by the Hecke operators $T_\ell$ for $\ell\nmid Np$, the operators $U_\ell$ for $\ell\mid Np$,
and the diamond operators $\langle a\rangle$ for $a\in(\Z/p^r\Z)^\times$,
acting on the space $S_k(\Gamma_{0,1}(N,p^r),\overline{\Q}_p)$ of cusp forms of weight $k$ on
$\Gamma_{0,1}(N,p^r):=\Gamma_0(N)\cap\Gamma_1(p^r)$.
For $k=2$, we abbreviate $\mathfrak h_{N,r}:=\mathfrak h_{N,r,2}$.
\sk

Let $e^{\rm ord}:=\lim_{n\to\infty}U_p^{n!}$ be Hida's ordinary projector, and define
\[
\mathfrak{h}_{N,r,k}^{\rm ord}:=e^{\rm ord}\mathfrak{h}_{N,r,k},\qquad
\mathfrak{h}_{N,r}^{\rm ord}:=e^{\rm ord}\mathfrak{h}_{N,r},\qquad
\mathfrak{h}_{N}^{\rm ord}:=\varprojlim_r\mathfrak{h}^{\rm ord}_{N,r},
\]
where the limit is over the projections induced by the natural restriction maps.
\sk

Let $\mathbb{T}^{N^-}_{N,r,k}$ be the quotient of $\mathfrak{h}_{N,r,k}^\ord$
acting faithfully on the subspace of $e^{\rm ord}S_k(\Gamma_{0,1}(N,p^r),\overline{\Q}_p)$ consisting of forms
which are new at all primes dividing $N^-$. Set $\mathbb T^{N^-}_{N,r}:=\mathbb T^{N^-}_{N,r,2}$ and define
\[
\mathbb{T}_{N}^{N^-}:=\varprojlim_r\mathbb{T}^{N^-}_{N,r}.
\]

Each of these Hecke algebras are equipped with natural $\Z_p\pwseries{\Z_p^\times}$-algebra structures via the diamond operators,
and by a well-known result of Hida, $\mathfrak{h}_N^{\rm ord}$ is finite and flat over $\Z_p\pwseries{1+p\Z_p}$.

\subsection{Galois representations on Hecke algebras}

For each positive integer $M\mid N$ we may consider
the new quotient $\mathbb T_M^\new$ of $\mathfrak h_M^\ord$, and the Galois representation
\begin{equation}\label{2.2.1}
\rho_M:G_\Q\longrightarrow\GL_2(\mathbb T_M^\new\otimes_{}\mathcal L)\nonumber
\end{equation}
described in \cite[Thm.~2.2.1]{EPW}, where $\mathcal L$ denotes the fraction field of $\Z_p\pwseries{1+p\Z_p}$.
\sk

Let $\mathbb T_N'$ be the $\Z_p\pwseries{1+p\Z_p}$-subalgebra of $\mathbb T_N^{N^-}$
generated by the image under the natural projection $\mathfrak{h}_{N}^{\rm ord}\rightarrow\mathbb{T}_N^{N^-}$
of the Hecke operators of level prime to $N$. As in \cite[Prop.~2.3.2]{EPW}, one can show that
the canonical map
\[
\mathbb T_N'\longrightarrow\prod_M\mathbb T_M^\new,
\]
where the product is over all integers $M\geqslant 1$ with $N^-\mid M\mid N$,
becomes an isomorphism after tensoring with $\mathcal L$.
Taking the product of the Galois representations $\rho_M$ we thus obtain 
\[
\rho:G_\Q\longrightarrow\GL_2(\mathbb T_N'\otimes\mathcal L).
\]

For any maximal ideal $\mathfrak m$ of $\mathbb T_N'$, let $(\mathbb T_N')_\mathfrak m$ denote the localization
of $\mathbb T_N'$ at $\mathfrak m$ and let
\[
\rho_\mathfrak m:G_\Q\longrightarrow \GL_2\left((\mathbb T_N ')_\mathfrak m\otimes\mathcal L\right)
\]
be the resulting Galois representation. If the residual representation
$\bar\rho_\mathfrak m$ 
is irreducible, then $\rho_{\mathfrak m}$ admits an integral model (still denoted in the same manner)
\[
\rho_\mathfrak m:G_\Q\longrightarrow\GL_2\left((\mathbb T_N')_\mathfrak m\right)
\]
which is unique up to isomorphism.

\subsection{Residual representations} \label{subsec:residual}

Let $\bar\rho:G_\Q\rightarrow\GL_2(\mathbf{F})$ be
an odd irreducible Galois representation defined over a finite field $\mathbf{F}$ of characteristic $p>3$.
As in the Introduction, we assume that $\bar\rho\simeq\bar{\rho}_{f_0}$ for some newform $f_0$ of weight $2$,
level $N$, and trivial nebentypus. Consider the following three conditions we may impose
on the pair $(\bar\rho,N^-)$:

\begin{assSU}[SU]\hfill
\begin{enumerate}
\item $\bar\rho$ is \emph{$p$-ordinary}: the restriction of $\bar\rho$ to a decomposition group $D_p\subseteq G_{\Q}$
at $p$ has a one-dimensional unramified quotient over $\mathbf{F}$;
\item $\bar\rho$ is \emph{$p$-distinguished}:
$\bar\rho_{}\vert_{D_p}\sim\left(\begin{smallmatrix}\bar{\varepsilon}&*\\0&\bar{\delta}\end{smallmatrix}\right)$
with $\bar{\varepsilon}\neq\bar{\delta}$;
\item $\bar\rho$ is ramified at every prime $\ell\mid N^-$.
\end{enumerate}
\end{assSU}

Fix once and for all a representation $\bar\rho$ satisfying Assumption~(SU),
together with a $p$-stabilization of $\bar\rho$ in the sense of \cite[Def.~2.2.10]{EPW}.
Let $\overline{V}$ be the two-dimensional $\mathbf{F}$-vector space which affords $\bar\rho$, and
for any finite set of primes $\Sigma$ that does not contain $p$ or any factor of $N^-$, define
\begin{equation}\label{Def:N(Sigma)}
N(\Sigma):=N(\bar\rho)\prod_{\ell\in\Sigma}\ell^{m_\ell},
\end{equation}
where $N(\bar{\rho})$ is the tame conductor of $\bar{\rho}$, and $m_\ell:={\rm dim}_{\mathbf{F}}\;\overline{V}_{I_\ell}$.

\begin{remark}
By Assumption~(SU) we have the divisibility $N^-\mid N(\bar\rho)$; we will further assume
that $(N^-,N(\bar\rho)/N^-)=1$.
\end{remark}

Combining \cite[Thm.~2.4.1]{EPW} and \cite[Prop.~2.4.2]{EPW} with the fact that $\bar\rho$ is
ramified at the primes dividing $N^-$, one can see that there exist unique maximal ideals
$\mathfrak n$ and $\mathfrak{m}$ of $\mathbb T_{N(\Sigma)}^{N^-}$ and $\mathbb T_{N(\Sigma)}'$, respectively,
such that
\begin{itemize}
\item 
$\mathfrak{n}\cap\mathbb T_{N(\Sigma)}'=\mathfrak{m}$;
\item $(\mathbb T_{N(\Sigma)}')_\mathfrak m\simeq(\mathbb T^{N^-}_{N(\Sigma)})_\mathfrak n$ by the natural map on localizations;
\item $\bar\rho_\mathfrak m\simeq\bar\rho$.
\end{itemize}
Define the ordinary Hecke algebra $\mathbb{T}_\Sigma$ attached to $\bar\rho$ and $\Sigma$ by
\[
\mathbb T_\Sigma:=(\mathbb T_{N(\Sigma)}')_{\mathfrak{m}}.
\]
Thus $\mathbb{T}_\Sigma$ is a local factor of $\mathbb{T}_{N(\Sigma)}'$, and we let
\[
\rho_\Sigma:G_\Q\longrightarrow \GL_2\left(\mathbb T_\Sigma\right)
\]
denote the Galois representation deduced from $\rho_\mathfrak{m}$.
\sk

Adopting the terminology of \cite[\S{2.4}]{EPW},
we shall refer to ${\rm Spec}(\mathbb{T}_\Sigma)$ as ``the Hida family'' $\mathcal{H}^-(\bar\rho)$ attached to $\bar\rho$
(and our chosen $p$-stabilization) that is minimally ramified outside $\Sigma$.

\begin{remark}
Note that by Assumption~\SU, all the $p$-stabilized newforms in $\mathcal{H}^-(\bar\rho)$
have tame level divisible by $N^-$.
\end{remark}

\subsection{Branches of the Hida family}\label{sec:branches}

If $\mathfrak a$ is a minimal prime of $\mathbb T_\Sigma$ (for a finite set of primes $\Sigma$ as above), we put
$\mathbb T (\mathfrak a):=\mathbb T_\Sigma /\mathfrak a$ and let
\[
\rho(\mathfrak a):G_\Q\longrightarrow\GL_2(\mathbb T(\mathfrak a))
\]
be the Galois representation induced by $\rho_\Sigma$. As in \cite[Prop.~2.5.2]{EPW}, one can show
that there is a unique divisor $N(\mathfrak a)$ of $N(\Sigma)$ and a unique minimal prime
$\mathfrak a'\subseteq\mathbb T_{N(\mathfrak a)}^\new$ above $\mathfrak{a}$ such that the diagram
\[
\xymatrix{
\mathbb T_\Sigma\ar[r] \ar[d]& \mathbb T'_{N(\Sigma)} \ar[r] & \prod_{N^-\mid M\mid N(\Sigma)}\mathbb T^\new_{M}\ar[d]\\
\mathbb{T}_{\Sigma}/\mathfrak{a} \ar[r]^-{=} & \mathbb T(\mathfrak a)\ar[r] & \mathbb T^\new_{N(\mathfrak a)}/\mathfrak a'}
\]
commutes. We call $N(\mathfrak a)$ the \emph{tame conductor} of $\mathfrak a$ and set
\[
\mathbb T(\mathfrak a)^\circ:=\mathbb T^\new_{N(\mathfrak a)}/\mathfrak a'.
\]

In particular, note that $N^-\mid N(\mathfrak a)$ by construction, and that the natural map
$\mathbb{T}(\mathfrak{a})\rightarrow\mathbb{T}(\mathfrak{a})^\circ$ is an embedding of local domains.

\subsection{Arithmetic specializations}

For any finite $\Z_p\pwseries{1+p\Z_p}$-algebra $\mathbb{T}$, we say that
a height one prime $\wp$ of $\mathbb{T}$ is an
\emph{arithmetic prime} of $\mathbb{T}$ if $\wp$ is the kernel of a $\Z_p$-algebra homomorphism
$\mathbb{T}\rightarrow\overline{\bQ}_p$ such that the composite map
\[
1+p\Z_p\longrightarrow\Z_p\pwseries{1+p\Z_p}^\times\longrightarrow\mathbb{T}^\times
\longrightarrow\overline{\Q}_p^\times
\]
is given by $\gamma\mapsto\gamma^{k-2}$ on some open subgroup of $1+p\Z_p$,
for some integer $k\geqslant 2$. We then say that $\wp$ has \emph{weight} $k$.
\sk

Let $\mathfrak a\subseteq\mathbb{T}_\Sigma$ be a minimal prime as above.
For each $n\geqslant 1$, let $\mathbf{a}_n\in\mathbb T(\mathfrak a)^\circ$ be the image of $T_n$
under the natural projection $\mathfrak{h}^{\rm ord}_{N(\Sigma)}\rightarrow\mathbb T(\mathfrak a)^\circ$, and form the $q$-expansion
\[
\F(\mathfrak a)=\sum_{n\geqslant 1}\mathbf{a}_nq^n\in\mathbb T(\mathfrak a)^\circ\pwseries{q}.
\]

By \cite[Thm.~1.2]{hida86b}, if $\wp$ is an arithmetic prime
of $\mathbb T(\mathfrak a)$ of weight $k$, then there is a unique height one prime
$\wp'$ of $\mathbb{T}(\mathfrak{a})^\circ$ such that
\[
\mathbf{f}_\wp(\mathfrak a):=\sum_{n\geqslant1}(\mathbf{a}_n\;{\rm mod}\;\wp')q^n\in\cO_\wp^\circ\pwseries{q},
\]
where $\cO_\wp^\circ:=\mathbb{T}(\mathfrak{a})^\circ/\wp'$,
is the $q$-expansion a $p$-ordinary eigenform $f_\wp$ of weight $k$ and tame level
$N(\mathfrak{a})$ (see \cite[Prop.~2.5.6]{EPW}).

\section{Big Heegner points}\label{sec:heegner}

As in Chapter~\ref{sec:hida-theory}, we fix an integer $N\geqslant 1$ admitting a factorization
$N=N^+N^-$ with $(N^+,N^-)=1$ and $N^-$ equal to the square-free product of an \emph{odd} number of primes, and
fix a prime $p\nmid 6N$. Also, we let $K/\Q$ be an imaginary quadratic field of discriminant $-D_K<0$ prime to $Np$
and such that every prime factor of $N^+$ (resp. $N^-$) splits (resp. is inert) in $K$.
\sk

In this section we describe a mild extension of the construction in \cite{LV-MM} (following \cite{howard-invmath})
of big Heegner points attached to $K$. Indeed, using the results from the preceding section,
we can extend the constructions of \emph{loc.cit.} to branches of the Hida family which are \emph{not necessarily}
primitive (in the sense of \cite[\S{1}]{hida86b}). The availability of such extension is fundamental for the
purposes of this paper.

\subsection{Definite Shimura curves}\label{subsec:Sh}

Let $B$ be the definite quaternion algebra
over $\Q$ of discriminant $N^-$. We fix once and for all an embedding of $\Q$-algebras
$K\hookrightarrow B$, and use it to identity $K$ with a subalgebra of $B$. Denote by $z\mapsto\overline{z}$
the nontrivial automorphism of $K$, and choose a basis $\{1,j\}$ of $B$ over $K$ such that
\begin{itemize}
\item $j^2=\beta\in\Q^\times$ with $\beta<0$;
\item $jt=\bar tj$ for all $t\in K$;
\item $\beta\in (\Z_q^\times)^2$ for $q\mid pN^+$, and $\beta\in\Z_q^\times$ for $q\mid D_K$.
\end{itemize}

Fix 
a square-root $\delta_K=\sqrt{-D_K}$, and
define $\boldsymbol{\theta}\in K$ by
\[
\boldsymbol{\theta}:=\frac{D'+\delta_K}{2},\quad\textrm{where}\quad
D':=\left\{
\begin{array}{ll}
D_K &\textrm{if $2\nmid D_K$;}\\
D_K/2 &\textrm{if $2\mid D_K$.}
\end{array}
\right.
\]
Note that $\cO_K=\Z+\Z\boldsymbol{\theta}$, and for every prime $q\mid pN^+$,
define $i_q:B_q:=B\otimes_\Q\Q_q \simeq \M_2(\Q_q)$ by
\[
i_q(\boldsymbol{\theta})=\mat{\mathrm{Tr}(\boldsymbol{\theta})}{-\mathrm{Nm}(\boldsymbol{\theta})}10,
\quad\quad
i_q(j)=\sqrt\beta\mat{-1}{\mathrm{Tr}(\boldsymbol{\theta})}01,
\]
where $\mathrm{Tr}$ and $\mathrm{Nm}$ are the reduced trace and reduced norm maps on $B$, respectively.
On the other hand, for each prime $q\nmid Np$ we fix any isomorphism $i_q:B_q\simeq \M_2(\Q_q)$
with the property that $i_q(\mathcal O_K\otimes_\Z\Z_q)\subset\M_2(\Z_q)$.
\sk

For each $r\geqslant 0$, let $R_{N^+,r}$ be the Eichler order of $B$ of level $N^+p^r$ with respect to the above isomorphisms
$\{i_q:B_q\simeq{\rm M}_2(\Q_q)\}_{q\nmid N^-}$, and let $U_{N^+,r}$ be the compact open
subgroup of $\widehat{R}_{N^+,r}^\times$ defined by
\[
U_{N^+,r}:=\left\{(x_q)_q\in\widehat{R}_{N^+,r}^\times\;\;\vert\;\;i_p(x_p)\equiv\mat 1*0*\pmod{p^r}\right\}.
\]

Consider the double coset spaces 
\begin{equation}\label{def:gross-curve}
\widetilde X_{N^+,r}=B^\times\big\backslash\bigl(\Hom_\Q(K,B)\times\widehat{B}^\times\bigr)\big/U_{N^+,r},
\end{equation}
where $b\in B^\times$ acts on $(\Psi,g)\in\Hom_\Q(K,B)\times\widehat B^\times$ by
\[
b\cdot(\Psi,g)=(b\Psi b^{-1},bg)
\]
and $U_{N^+,r}$ acts on $\widehat{B}^\times$ by right multiplication.
As is well-known (see e.g. \cite[\S{2.1}]{LV-MM}), $\widetilde X_{N^+,r}$ may be naturally identified with
the set of $K$-rational points of certain genus zero curves defined over $\Q$. Nonetheless, there
is a nontrivial Galois action on $\widetilde{X}_{N^+,r}$ defined as follows:
If $\sigma\in{\rm Gal}(K^{\rm ab}/K)$ and $P\in\widetilde X_{N^+,r}$
is the class of a pair $(\Psi,g)$, then
\[
P^\sigma:=[(\Psi,\widehat{\Psi}(a)g)],
\]
where $a\in K^\times\backslash\widehat{K}^\times$ is chosen so that ${\rm rec}_K(a)=\sigma$.
It will be convenient to extend this action to an action of $G_K:={\rm Gal}(\overline{\Q}/K)$
in the obvious manner.
\sk

Finally, we note that $\widetilde X_{N^+,r} $ is also equipped
with standard actions of $U_p$, Hecke operators $T_\ell$ for $\ell\nmid Np$, and diamond
operators $\langle d \rangle$ for $d\in(\Z/p^r\Z)^\times$ (see \cite[\S{2.4}]{LV-MM}, for example).

\subsection{Compatible systems of Heegner Points}\label{subsec:construct}

For each integer $c\geqslant 1$, let $\cO_c=\Z+c\cO_K$ be the order of $K$ of conductor $c$.

\begin{definition}
We say that a point $P\in\widetilde X_{N^+,r}$ is a \emph{Heegner point of conductor $c$}
if $P$ is the class of a pair $(\Psi,g)$ with
\[
\Psi(\cO_c)=\Psi(K)\cap(B\cap g\widehat{R}_{N^+,r}g^{-1})
\]
and
\[
\Psi_p((\cO_c\otimes\Z_p)^\times\cap(1+p^r\cO_K\otimes\Z_p)^\times)
=\Psi_p((\cO_c\otimes\Z_p)^\times)\cap g_pU_{N^+,r,p}g_p^{-1},
\]
where $U_{N^+,r,p}$ denotes the $p$-component of $U_{N^+,r}$.
\end{definition}

Fix a decomposition $N^+\cO_K=\mathfrak{N}^+\overline{\mathfrak{N}^+}$, and for each prime $q\neq p$ define
\begin{itemize}
\item{} $\varsigma_q=1$, if $q\nmid N^+$;
\item{} $\varsigma_q=\delta_K^{-1}\begin{pmatrix}\boldsymbol{\theta} & \overline{\boldsymbol{\theta}} \\ 1 & 1 \end{pmatrix}
\in{\rm GL}_2(K_{\mathfrak{q}})={\rm GL}_2(\Q_q)$, if
$q=\mathfrak{q}\overline{\mathfrak{q}}$ splits with $\mathfrak{q}\mid\mathfrak{N}^+$,
\end{itemize}
and for each $s\geqslant 0$, let
\begin{itemize}
\item{} $\varsigma_p^{(s)}=\begin{pmatrix}\boldsymbol{\theta}&-1\\1&0\end{pmatrix}\begin{pmatrix}p^s&0\\0&1\end{pmatrix}
\in{\rm GL}_2(K_{\mathfrak{p}})={\rm GL}_2(\Q_p)$,
if $p=\mathfrak{p}\overline{\mathfrak{p}}$ splits in $K$;
\item{}
$\varsigma_p^{(s)}=\begin{pmatrix}0&1\\-1&0\end{pmatrix}\begin{pmatrix}p^s&0\\0&1\end{pmatrix}$, if $p$ is inert in $K$.
\end{itemize}

Set $\varsigma^{(s)}:=\varsigma_p^{(s)}\prod_{q\neq p}\varsigma_q$, viewed as an element in
$\widehat{B}^\times$ via the isomorphisms $\{i_q:B_q\simeq M_2(\Q_q)\}_{q\nmid N^-}$ introduced in Section~\ref{subsec:Sh}.
Let $\imath_K:K\hookrightarrow B$ be the inclusion. Then one easily checks
(see \cite[Thm.~1.2]{cas-longo} and the references therein) that for all $n, r\geqslant 0$
the points
\[
\widetilde{P}_{p^n,r}^{}:=[(\imath_K,\varsigma^{(n+r)})]\in\widetilde X_{N^+,r}
\]
are Heegner point of conductor $p^{n+r}$ with the following properties:
\begin{itemize}
\item{} \emph{Field of definition}: $\widetilde{P}_{p^n,r}\in H^0(L_{p^n,r},\widetilde X_{N^+,r})$,
where $L_{p^n,r}:=H_{p^{n+r}}(\boldsymbol{\mu}_{p^r})$ and $H_{c}$ is the ring class field of $K$ of conductor $c$.
\item{} \emph{Galois equivariance}: For all $\sigma\in{\rm Gal}(L_{p^n,r}/H_{p^{n+r}})$, we have
\[
\widetilde{P}_{p^n,r}^\sigma=\langle\vartheta(\sigma)\rangle\cdot\widetilde{P}_{p^n,r},
\]
where $\vartheta:{\rm Gal}(L_{p^n,r}/H_{p^{n+r}})\rightarrow\Z_p^\times/\{\pm{1}\}$ is such that
$\vartheta^2=\varepsilon_{\rm cyc}$.
\item{} \emph{Horizontal compatibility}: If $r>1$, then
\[
\sum_{\sigma\in{\rm Gal}(L_{p^n,r}/L_{p^{n-1},r})}
\widetilde{\alpha}_r(\widetilde{P}_{p^{n},r}^{{\sigma}})
=U_p\cdot\widetilde{P}_{p^{n},r-1},
\]
where $\widetilde{\alpha}_r:\widetilde X_{N^+,r}\rightarrow\widetilde{X}_{N^+,{r-1}}$ is the map
induced by the inclusion $U_{N^+,r}\subseteq U_{N^+,r-1}$.
\item{} \emph{Vertical Compatibility}: If $n>0$, then
\[
\sum_{\sigma\in{\rm Gal}(L_{p^n,r}/L_{p^{n-1},r})}\widetilde{P}_{p^{n},r}^{{\sigma}}
=U_p\cdot\widetilde{P}_{p^{n-1},r}.
\]
\end{itemize}

\begin{remark}
We will only consider the points $\widetilde{P}_{p^n,r}$
for a fixed a value of $N^-$ (which amounts to fixing the quaternion algebra $B/\Q$),
but it will be fundamental to consider \emph{different} values of $N^+$, and the relations between
the corresponding $\widetilde{P}_{p^n,r}$ (which clearly depend on $N^+$) under various degeneracy maps.
\end{remark}

\subsection{Critical character}\label{subsec:crit}

Factor the $p$-adic cyclotomic character as
\[
\varepsilon_{\rm cyc}=\varepsilon_{\rm tame}\cdot\varepsilon_{\rm wild}:
G_\Q\longrightarrow\Z_p^\times\simeq\boldsymbol{\mu}_{p-1}\times(1+p\Z_p)
\]
and define the \emph{critical character} $\Theta:G_\Q\rightarrow\Z_p\pwseries{1+p\Z_p}^\times$ by
\begin{equation}\label{def:crit}
\Theta(\sigma)=
[\varepsilon^{1/2}_{\rm wild}(\sigma)],
\end{equation}
where $\varepsilon_{\rm wild}^{1/2}$ is the unique square-root of $\varepsilon_{\rm wild}$ taking values in $1+p\Z_p$,
and $[\cdot]:1+p\Z_p\rightarrow\Z_p\pwseries{1+p\Z_p}^\times$ is the map given by the inclusion as group-like elements.


\subsection{Big Heegner points}\label{subsec:bigHP}

Recall the Shimura curves $\widetilde X_{N^+,p^r}$ from Section~\ref{subsec:Sh}, and set
\[
\mathfrak{D}_{N^+,r}:=e^{\rm ord}({\rm Div}(\widetilde{X}_{N^+,r})\otimes_{\Z}\Z_p).
\]
By the Jacquet--Langlands correspondence, $\mathfrak{D}_{N^+,r}$ is naturally endowed with an action of
the Hecke algebra $\mathbb{T}_{N,r}^{N^-}$. Let $(\mathbb T_{N,r}^{N^-})^\dagger$ be the
free $\mathbb T_{N,r}^{N^-}$-module of rank one equipped with the Galois action via the inverse
of the critical character $\Theta$, and set
\[
\mathfrak{D}_{N^+,r}^\dagger:=\mathfrak{D}_{N^+,r}\otimes_{\mathbb{T}_{N,r}^{N^-}}(\mathbb T_{N,r}^{N^-})^\dagger.
\]

Let $\widetilde{P}_{p^{n},r}\in\widetilde{X}_{N^+,r}$ be the system of Heegner points of Section~\ref{subsec:construct}, and denote by $\mathcal{P}_{p^{n},r}^{}$ the image of
$e^{\rm ord}\widetilde{P}_{p^{n},r}^{}$ in $\mathfrak{D}_{N^+,r}$.
By the Galois equivariance of $\widetilde{P}_{p^{n},r}$ (see \cite[\S{7.1}]{LV-MM}),
we have
\[
\mathcal{P}_{p^{n},r}^\sigma=\Theta(\sigma)\cdot\mathcal{P}_{p^{n},r}
\]
for all $\sigma\in{\rm Gal}(L_{p^n,r}/H_{p^{n+r}})$, and hence
$\mathcal{P}_{p^{n},r}$ defines an element
\begin{equation}\label{eq:n,m-sigma}
\mathcal{P}_{p^n,r}\otimes\zeta_r\in H^0(H_{p^{n+r}},\mathfrak{D}_{N^+,r}^\dagger).
\end{equation}

In the next section we shall see how this system of points, for varying $n$ and $r$, can be used
to construct various anticyclotomic $p$-adic $L$-functions.

\section{Anticyclotomic $p$-adic $L$-functions}\label{sec:p-adicL}

\subsection{Multiplicity one}\label{subsec:periods}

Keep the notations introduced in Chapter~\ref{sec:heegner}.
For each integer $k\geqslant 2$, denote by $L_k(R)$ the set of polynomials of degree
less than or equal to $k-2$ with coefficients in a ring $R$, and define
\[
\mathfrak{J}_{N^+,r,k}:=e^{\rm ord}H_0(\widetilde{X}_{N^+,r},\mathcal{L}_k(\Z_p)),
\]
where $\mathcal{L}_k(\Z_p)$ is the local system on $\widetilde{X}_{N^+,r}$ associated with $L_k(\Z_p)$.
The module $\mathfrak{J}_{N^+,r,k}$ is endowed with an action of the Hecke algebra $\mathbb{T}_{N,r,k}^{N^-}$
and with perfect ``intersection pairing'':
\begin{equation}\label{eq:pairing}
\langle\;,\;\rangle_k:\mathfrak{J}_{N^+,r,k}\times\mathfrak{J}_{N^+,r,k}\longrightarrow\Q_p
\end{equation}
(see \cite[Eq.~(3.9)]{ChHs1}) with respect to which the Hecke operators are self-adjoint.

\begin{theorem}\label{thm:3.1.1}
Let $\mathfrak{m}$ be a maximal ideal of $\mathbb{T}^{N^-}_{N,r,k}$
whose residual representation is irreducible and satisfies Assumption~\SU. 
Then $(\mathfrak{J}_{N^+,r,k})_{\mathfrak{m}}$ 
is free of rank one over $(\mathbb{T}_{N,r,k}^{N^-})_{\mathfrak{m}}$.
In particular, there is a
$(\mathbb{T}^{N^-}_{N,r,k})_{\mathfrak{m}}$-module isomorphism
\[
(\mathfrak{J}_{N^+,r,k})_{\mathfrak{m}}\overset{\alpha_{N,r,k}}\simeq (\mathbb{T}_{N,r,k}^{N^-})_{\mathfrak{m}}.
\]
\end{theorem}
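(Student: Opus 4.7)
The plan is to deduce Theorem~\ref{thm:3.1.1} from a mod $p$ multiplicity one statement, which is the customary way such freeness assertions are established in the quaternionic setting. Since $(\mathfrak{J}_{N^+,r,k})_{\mathfrak{m}}$ is a finitely generated module over the local Noetherian ring $(\mathbb{T}^{N^-}_{N,r,k})_{\mathfrak{m}}$, and the Hecke algebra acts faithfully by construction, freeness of rank one is equivalent to cyclicity. By Nakayama's lemma this reduces to proving
\[
\dim_{\mathbf{F}}\bigl((\mathfrak{J}_{N^+,r,k})_{\mathfrak{m}}\otimes_{(\mathbb{T}^{N^-}_{N,r,k})_{\mathfrak{m}}}\mathbf{F}\bigr)=1,
\]
where $\mathbf{F}$ is the residue field at $\mathfrak{m}$; equivalently, the $\bar\rho$-isotypic subspace of mod $p$ quaternionic modular forms of the given weight and level is one-dimensional.

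Next I would transfer the question to a more familiar one via the Jacquet--Langlands correspondence. The module $\mathfrak{J}_{N^+,r,k}$, being the ordinary part of the weight-$k$ coefficients on the finite set $\widetilde{X}_{N^+,r}$, corresponds to the subspace of $e^{\rm ord}S_k(\Gamma_{0,1}(N,p^r),\overline{\Q}_p)$ consisting of forms that are new at every prime dividing $N^-$. The hypothesis in Assumption~\SU{} that $\bar\rho$ is ramified at all $\ell\mid N^-$ is crucial here: together with the $p$-distinguished hypothesis, it prevents any $N^-$-old contribution from entering the localization at $\mathfrak{m}$, so the Jacquet--Langlands transfer is sharp after localization.

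The core step is then mod $p$ multiplicity one for the $\bar\rho$-isotypic component. Under Assumption~\SU{}, this is the quaternionic analogue of the theorem of Wiles--Taylor--Diamond--Fujiwara: one chooses a set of Taylor--Wiles auxiliary primes (whose existence is ensured by absolute irreducibility, together with surjectivity when $\mathbf{F}=\mathbf{F}_5$) and patches the Hecke modules to identify the deformation ring with the Hecke algebra as acting freely on the patched module of rank one. In the ordinary definite setting this patching has been carried out by Diamond and later by Helm; an alternative argument, closer in spirit to Mazur--Ribet, runs through Ihara's lemma for definite quaternion algebras and the ramification at $N^-$. The main obstacle is precisely verifying the input of this machinery: one must ensure that Taylor--Wiles primes exist (using irreducibility), that the $p$-distinguished hypothesis allows the ordinary deformation ring to be controlled, and that the isotypic component is non-zero (which follows from the existence of a form of level $N$ in $\mathcal{H}(\bar\rho)$, since $\bar\rho$ is modular by \cite{kw} and the ramification hypotheses force the tame level to be divisible by $N^-$).

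Finally, with the mod $p$ multiplicity one in hand, a choice of generator of $(\mathfrak{J}_{N^+,r,k})_{\mathfrak{m}}/\mathfrak{m}$ lifts to a generator of $(\mathfrak{J}_{N^+,r,k})_{\mathfrak{m}}$ and determines the Hecke-equivariant isomorphism $\alpha_{N,r,k}$, unique up to a unit in $(\mathbb{T}^{N^-}_{N,r,k})_{\mathfrak{m}}$; this scalar indeterminacy is the quaternionic analogue of the period ambiguity that appears throughout \cite{EPW}, and will play the same role in the constructions of Section~\ref{sec:p-adicL}.
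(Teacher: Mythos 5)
Your overall strategy (reduce freeness to a mod $p$ multiplicity one statement via Nakayama and faithfulness) identifies the right ultimate input, but as written it has a genuine gap: you propose to prove multiplicity one \emph{directly at each weight $k$ and each $p$-power level $p^r$} by Taylor--Wiles patching or by Ihara's lemma for definite quaternion algebras, and the machinery you cite does not cover the full range of the statement. The patching/multiplicity-one results of Wiles--Taylor--Diamond--Fujiwara, Diamond and Helm in the definite quaternionic setting are available at weight $2$ (or at best for weights in the Fontaine--Laffaille range), and the Ihara-lemma route is precisely what forces the restriction $k\leq p-2$ in Chida--Hsieh, a restriction the introduction of this paper explicitly wants to avoid; moreover none of these finite-level results handle arbitrary $\Gamma_1(p^r)$-level structure directly. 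So for general $k\geq 2$ (in particular $k\geq p$) and general $r$ your argument does not go through as sketched, and ``verifying the input of the machinery'' is not just a matter of Taylor--Wiles primes and $p$-distinguishedness.

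The paper's proof avoids this entirely: the only case proved ``by hand'' is $k=2$, $r=1$, which is exactly the multiplicity one theorem of Pollack--Weston (\cite[Thm.~6.2, Prop.~6.5]{pollack-weston}) under Assumption~(SU). The general case is then deduced in Section~\ref{subsec:period-families} by Hida theory: Theorem~\ref{thm:3.3.1} shows that the big ordinary module $(\mathfrak{J}_{N^+})_{\mathfrak{m}}$ is free of rank one over $(\mathbb{T}_N^{N^-})_{\mathfrak{m}}$ (following \cite[Prop.~3.3.1]{EPW}, with Hida's control theorem \cite[Thm.~9.4]{hida-annals} as the key input), and the finite-level statement for any $(k,r)$ follows by specializing along the control isomorphism $(\mathfrak{J}_{N^+})_{\mathfrak{m}}\otimes\mathbb{T}^{N^-}_N/\wp_{N,r,k}\simeq(\mathfrak{J}_{N^+,r,k})_{\mathfrak{m}_{r,k}}$, since a quotient of a free rank-one module by the relevant ideal is free of rank one over the corresponding quotient Hecke algebra. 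If you want to salvage your approach, you would need either to supply a multiplicity one argument valid for all weights and all $p$-power levels (essentially redoing ordinary patching \`a la Geraghty, i.e.\ rebuilding Hida theory), or to do what the paper does and propagate the $k=2$, $r=1$ case through the Hida family.
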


\begin{proof}
If $k=2$ and $r=1$, this follows by combining \cite[Thm.~6.2]{pollack-weston} and [\emph{loc.cit.}, Prop.~6.5].
The general case will be deduced from this case in Section~\ref{subsec:period-families} using Hida theory.
\end{proof}

Let $f\in S_k(\Gamma_{0,1}(N,p^r))$ be an $N^-$-new eigenform, and suppose that $\mathfrak{m}$
is the maximal ideal of $\mathbb{T}_{N,r,k}^{N^-}$ containing the kernel of the associated $\Z_p$-algebra homomorphism
\[
\pi_f:(\mathbb{T}_{N,r,k}^{N^-})_{\mathfrak{m}}\longrightarrow\cO,
\]
where $\cO$ is the the finite extension
of $\Z_p$ generated by the Fourier coefficients of $f$.
Composing $\pi_f$ with an isomorphism $\alpha_{N,r,k}$ as in Theorem~\ref{thm:3.1.1},
we obtain an $\cO$-valued functional
\[
\psi_f:(\mathfrak{J}_{N^+,r,k})_{\mathfrak{m}}\longrightarrow\cO.
\]
By the duality $(\ref{eq:pairing})$, the map $\psi_f$ corresponds to a generator $g_f$
of the $\pi_f$-isotypical component of $\mathfrak{J}_{N^+,r,k}$, and following \cite[\S{2.1}]{pollack-weston}
and \cite[\S{4.1}]{ChHs1} we define the \emph{Gross period} $\Omega_{f,N^-}$ attached to $f$ by
\begin{equation}\label{def:period}
\Omega_{f,N^-}:=\frac{(f,f)_{\Gamma_0(N)}}{\langle g_f,g_f\rangle_k}.
\end{equation}

\begin{remark}
By Vatsal's work \cite{Vat1} (see also \cite[Thm.~2.3]{pollack-weston} and \cite[\S{5.4}]{ChHs1}),
the anticyclotomic $p$-adic $L$-functions $L_p(f/K)$ in Theorem~\ref{thm:3.4.3} below
(normalized by the complex period $\Omega_{f,N^-}$) have vanishing $\mu$-invariant.
The preceding uniform description of $\psi_f$ for all $f$
with a common maximal ideal $\mathfrak{m}$ will allow us to show that this property is preserved
in Hida families.
\end{remark}

\subsection{One-variable $p$-adic $L$-functions}

Denote by $\Gamma$ the Galois group
of the anticyclotomic $\Z_p$-extension $K_\infty/K$. For each $n$, let $K_n\subset K_\infty$ be
defined by $\Gal(K_n/K)\simeq\Z/p^n\Z$ and let $\Gamma_n$ be the subgroup of $\Gamma$
such that $\Gamma/\Gamma_n\simeq\Gal(K_n/K)$.
\sk

Let $\mathcal{P}_{p^{n+1},r}^{}\otimes\zeta_r\in H^0(H_{p^{n+1+r}},\mathfrak{D}^\dagger_{N^+,r})$
be the Heegner point of conductor $p^{n+1}$, 
and define
\begin{equation}\label{def:Q}
\mathcal{Q}_{n,r}^{}:={\rm Cor}_{H_{p^{n+1+r}}/K_n}(\mathcal{P}_{p^{n+1},r}^{}\otimes\zeta_r)
\in H^0(K_n,\mathfrak{D}^\dagger_{N^+,r});
\end{equation}
with a slight abuse of notation, we also denote by $\mathcal{Q}_{n,r}^{}$ its image
under the natural map
\[
H^0(K_n,\mathfrak{D}^\dagger_{N^+,r})\overset{\subseteq}\longrightarrow\mathfrak{D}_{N^+,r}
\longrightarrow\mathfrak{J}_{N^+,r}
\]
composed with localization at $\mathfrak{m}$, where $\mathfrak{J}_{N^+,r}:=\mathfrak{J}_{N^+,r,2}$.

\begin{definition}\label{def:1var}
For any open subset $\sigma\Gamma_n$ of $\Gamma$, define
\[
\mu_r^{}(\sigma\Gamma_n):=U_p^{-n}\cdot\mathcal{Q}^\sigma_{n,r}
\in(\mathfrak{J}_{N^+,r})_{\mathfrak{m}}.
\]
\end{definition}

\begin{proposition}
The rule $\mu_r$ is a measure on $\Gamma$.
\end{proposition}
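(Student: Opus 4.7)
The plan is to verify the distribution relation that characterizes a $(\mathfrak{J}_{N^+,r})_\mathfrak{m}$-valued measure on the procyclic group $\Gamma=\varprojlim_n\Gamma/\Gamma_n$. Since $\Gamma/\Gamma_n=\Gal(K_n/K)$ acts on $\mathcal{Q}_{n,r}\in H^0(K_n,\mathfrak{D}^\dagger_{N^+,r})$ (and this action is implicit in writing $\mu_r(\sigma\Gamma_n)$ for a representative $\sigma$ of a coset), the compatibility to check is
\[
\mu_r(\sigma\Gamma_n)=\sum_{\tau\in\Gamma_n/\Gamma_{n+1}}\mu_r(\sigma\tau\Gamma_{n+1})
\]
for every $n\geq 0$ and every $\sigma\in\Gamma$. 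Unwinding Definition~\ref{def:1var} and factoring out the action of $\sigma$, this reduces to the single identity
\[
U_p\cdot\mathcal{Q}_{n,r}=\mathrm{Cor}_{K_{n+1}/K_n}(\mathcal{Q}_{n+1,r})
\]
in $(\mathfrak{J}_{N^+,r})_\mathfrak{m}$, since corestriction from $K_{n+1}$ to $K_n$ encodes the sum over $\tau\in\Gamma_n/\Gamma_{n+1}$.

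First I would use transitivity of corestriction together with the definition $\mathcal{Q}_{n+1,r}=\mathrm{Cor}_{H_{p^{n+2+r}}/K_{n+1}}(\mathcal{P}_{p^{n+2},r}\otimes\zeta_r)$ to rewrite $\mathrm{Cor}_{K_{n+1}/K_n}(\mathcal{Q}_{n+1,r})=\mathrm{Cor}_{H_{p^{n+2+r}}/K_n}(\mathcal{P}_{p^{n+2},r}\otimes\zeta_r)$. Since $\mathcal{Q}_{n,r}=\mathrm{Cor}_{H_{p^{n+1+r}}/K_n}(\mathcal{P}_{p^{n+1},r}\otimes\zeta_r)$, the identity above follows as soon as one establishes
\[
\mathrm{Cor}_{H_{p^{n+2+r}}/H_{p^{n+1+r}}}(\mathcal{P}_{p^{n+2},r}\otimes\zeta_r)=U_p\cdot(\mathcal{P}_{p^{n+1},r}\otimes\zeta_r).
\]

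This in turn is essentially a restatement of the vertical compatibility of Heegner points recorded in Section~\ref{subsec:construct}: applying the ordinary projector $e^{\mathrm{ord}}$ to $\sum_{\sigma\in\Gal(L_{p^{n+2},r}/L_{p^{n+1},r})}\widetilde{P}_{p^{n+2},r}^\sigma=U_p\cdot\widetilde{P}_{p^{n+1},r}$, pushing forward to $\mathfrak{D}_{N^+,r}$, and tensoring with $\zeta_r$, one obtains the corresponding relation after identifying $\Gal(L_{p^{n+2},r}/L_{p^{n+1},r})$ with $\Gal(H_{p^{n+2+r}}/H_{p^{n+1+r}})$ (using disjointness of $H_{p^{n+2+r}}$ and $\Q(\boldsymbol{\mu}_{p^r})$ over $H_{p^{n+1+r}}$). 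Finally, the operator $U_p^{-n}$ appearing in the definition makes sense because $U_p$ is a unit in the localization $(\mathbb{T}_{N,r}^{N^-})_\mathfrak{m}$, the maximal ideal $\mathfrak{m}$ being ordinary.

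The main subtle point is the interaction of the twist $\otimes\zeta_r$ (which records the cyclotomic character) with the corestrictions along the anticyclotomic tower: one needs the Galois equivariance $\mathcal{P}_{p^{n+1},r}^\sigma=\Theta(\sigma)\cdot\mathcal{P}_{p^{n+1},r}$ for $\sigma\in\Gal(L_{p^n,r}/H_{p^{n+r}})$ to ensure that $\mathcal{P}_{p^{n+1},r}\otimes\zeta_r$ descends to an element rational over $H_{p^{n+1+r}}$, and one has to verify that this descent commutes with the corestrictions involved. Once this bookkeeping is done, the measure relation is a formal consequence of vertical compatibility.
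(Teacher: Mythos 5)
Your argument is correct and is, in substance, exactly the formal reduction the paper's one-line proof gestures at: additivity of $\mu_r$ on cosets is equivalent to $U_p\cdot\mathcal{Q}_{n,r}=\mathrm{Cor}_{K_{n+1}/K_n}(\mathcal{Q}_{n+1,r})$, which by transitivity of corestriction reduces to the fixed-level trace relation $\mathrm{Cor}_{H_{p^{n+2+r}}/H_{p^{n+1+r}}}(\mathcal{P}_{p^{n+2},r}\otimes\zeta_r)=U_p\cdot(\mathcal{P}_{p^{n+1},r}\otimes\zeta_r)$, and the $U_p^{-n}$ is licit after localizing at the ordinary ideal $\mathfrak{m}$. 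One point worth flagging: you invoke the \emph{vertical} compatibility of Section~\ref{subsec:construct}, whereas the paper's proof cites the \emph{horizontal} one. As the two properties are actually stated there, your attribution is the right one: the relation needed here keeps the level $r$ fixed and lowers the conductor (the ``vertical'' bullet), while the ``horizontal'' bullet involves the level-lowering map $\widetilde{\alpha}_r$ and is what is really needed in Section~\ref{subsec:2varL} to form $\mathcal{Q}_n=\varprojlim_r U_p^{-r}\mathcal{Q}_{n,r}$ (where the paper, conversely, cites ``vertical''); the two labels in these one-line justifications appear to be swapped relative to the paper's own list. Your remaining bookkeeping --- the identification $\Gal(L_{p^{n+2},r}/L_{p^{n+1},r})\simeq\Gal(H_{p^{n+2+r}}/H_{p^{n+1+r}})$ via linear disjointness and the compatibility of the $\Theta^{-1}$-twist with corestriction --- is precisely the right level of care for this step.
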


\begin{proof}
This follows immediately from the ``horizontal compatibility'' of Heegner points.
\end{proof}

%
%

\subsection{Gross periods in Hida families}\label{subsec:period-families}

Keep the notations of Section~\ref{subsec:periods}, and let
\[
(\mathfrak{J}_{N^+})_{\mathfrak{m}}:=\varprojlim_r(\mathfrak{J}_{N^+,r})_{\mathfrak{m}}
\]
which is naturally equipped with an action of the big Hecke algebra $\mathbb{T}_N^{N^-}=\varprojlim_r\mathbb{T}^{N^-}_{N,r}$.

\begin{theorem}\label{thm:3.3.1}
Let $\mathfrak{m}$ be a maximal ideal of $\mathbb{T}^{N^-}_{N}$ whose residual representation
is irreducible and satisfies Assumption~\SU.
Then $(\mathfrak{J}_{N^+})_{\mathfrak{m}}$ is free of rank one over $(\mathbb{T}_{N}^{N^-})_{\mathfrak{m}}$.
In particular, there is a $(\mathbb{T}^{N^-}_{N})_{\mathfrak{m}}$-module isomorphism
\[
(\mathfrak{J}_{N^+})_{\mathfrak{m}}\overset{\alpha_{N}}\simeq(\mathbb{T}_{N}^{N^-})_{\mathfrak{m}}.
\]
\end{theorem}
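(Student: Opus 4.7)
The strategy is to deduce Theorem~\ref{thm:3.3.1} from the base case $r=1$, $k=2$ of Theorem~\ref{thm:3.1.1}, which is already available by \cite[Thm.~6.2, Prop.~6.5]{pollack-weston}, through a Hida-theoretic control argument combined with Nakayama's lemma, and then upgrading cyclicity to freeness by a generic rank comparison.

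Write $\Lambda_{\rm wt}:=\Z_p\pwseries{1+p\Z_p}$ for the weight algebra, acting on all Hecke algebras and modules via the diamond operators. The first step is to establish the control isomorphisms
\[
(\mathbb{T}_N^{N^-})_\mathfrak{m}\otimes_{\Lambda_{\rm wt}}\Z_p\;\simeq\;(\mathbb{T}_{N,1}^{N^-})_{\bar{\mathfrak{m}}},\qquad
(\mathfrak{J}_{N^+})_\mathfrak{m}\otimes_{\Lambda_{\rm wt}}\Z_p\;\simeq\;(\mathfrak{J}_{N^+,1})_{\bar{\mathfrak{m}}},
\]
where $\bar{\mathfrak{m}}$ is the image of $\mathfrak{m}$ at finite level and the tensor is taken along the augmentation $\Lambda_{\rm wt}\to\Z_p$. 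The first isomorphism is standard Hida theory for ordinary Hecke algebras; the second follows from exactness of the projector $e^{\rm ord}$ applied to the inverse system of divisor groups $\mathrm{Div}(\widetilde{X}_{N^+,r})$, whose transition maps are surjective.

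With control in hand, Nakayama's lemma yields cyclicity: Pollack--Weston gives that $(\mathfrak{J}_{N^+,1})_{\bar{\mathfrak{m}}}$ is free of rank one over $(\mathbb{T}_{N,1}^{N^-})_{\bar{\mathfrak{m}}}$, so I may pick a lift $x\in(\mathfrak{J}_{N^+})_\mathfrak{m}$ of a generator at finite level. Since the residue field $\mathbf{F}$ of the maximal ideal of $(\mathbb{T}_N^{N^-})_\mathfrak{m}$ coincides with that at finite level, the image of $x$ spans $(\mathfrak{J}_{N^+})_\mathfrak{m}/\mathfrak{m}$ as a one-dimensional $\mathbf{F}$-vector space, so Nakayama produces the desired surjection $\alpha_N\colon(\mathbb{T}_N^{N^-})_\mathfrak{m}\twoheadrightarrow(\mathfrak{J}_{N^+})_\mathfrak{m}$ sending $1\mapsto x$.

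To promote cyclicity to freeness, I would compare generic $\Lambda_{\rm wt}$-ranks after base change to $\mathcal{L}$. The Hecke algebra $(\mathbb{T}_N^{N^-})_\mathfrak{m}$ is finite flat over $\Lambda_{\rm wt}$ by Hida, and its base change to $\mathcal{L}$ decomposes as a product of $\mathcal{L}$-algebras indexed by the $N^-$-new primitive Hida families with residual representation $\bar\rho$; the module $(\mathfrak{J}_{N^+})_\mathfrak{m}\otimes\mathcal{L}$ admits an analogous decomposition, and on each factor the classical multiplicity one theorem for $N^-$-new forms (via Jacquet--Langlands, using the ramification condition in (SU) which rules out oldform contributions) gives a rank-one contribution. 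Both sides of $\alpha_N$ therefore have equal generic $\Lambda_{\rm wt}$-rank, so $\ker(\alpha_N)$ is $\Lambda_{\rm wt}$-torsion; since $(\mathbb{T}_N^{N^-})_\mathfrak{m}$ is $\Lambda_{\rm wt}$-flat, this forces $\ker(\alpha_N)=0$. The main obstacle is the second control isomorphism — verifying that inverse-limiting the ordinary divisor groups behaves well after localization at the non-Eisenstein ideal $\mathfrak{m}$ — after which Nakayama and the rank comparison are largely formal.
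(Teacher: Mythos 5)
Your architecture is the same as the paper's: its proof of Theorem~\ref{thm:3.3.1} is precisely the argument of \cite[Prop.~3.3.1]{EPW} --- multiplicity one at the bottom level $k=2$, $r=1$ from \cite[Thm.~6.2, Prop.~6.5]{pollack-weston}, Hida-theoretic control, Nakayama, and a rank comparison --- with the control input supplied by \cite[Thm.~9.4]{hida-annals} (cf.\ Remark~\ref{remark3.5}). Two places in your write-up need repair, though neither changes the skeleton. First, the control isomorphism $(\mathfrak{J}_{N^+})_{\mathfrak{m}}\otimes_{\Lambda_{\rm wt}}\Z_p\simeq(\mathfrak{J}_{N^+,1})_{\mathfrak{m}_{1,2}}$, where $\Lambda_{\rm wt}:=\Z_p\pwseries{1+p\Z_p}$, is not a formal consequence of surjective transition maps plus exactness of $e^{\rm ord}$: that only yields a surjection onto the finite-level module, whereas what is needed is the identification of the kernel with $\wp_{N,1,2}\cdot(\mathfrak{J}_{N^+})_{\mathfrak{m}}$, i.e.\ that coinvariants of the big ordinary homology recover level-$r$ ordinary homology. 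This is exactly Hida's control theorem for definite quaternion algebras, which is what the paper cites; you correctly single this step out as the crux, but the justification you sketch does not prove it.

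Second, your parenthetical that the ramification condition in (SU) ``rules out oldform contributions'' is incorrect: ramification at $\ell\mid N^-$ only governs the $N^-$-new quotient, while primes $\ell\mid N^+$ genuinely carry old contributions (this is the whole point of the imprimitive branches and of the Euler factors $E_\ell(\mathfrak{a})$ appearing later in the paper). On such old components the generic fibre of $(\mathfrak{J}_{N^+})_{\mathfrak{m}}$ is free of rank one over the corresponding local factor of the Hecke algebra --- which contains $U_\ell$ --- only after an analysis of the $U_\ell$-action on the span of the degenerate vectors; classical multiplicity one for newforms by itself does not give this. The cleaner route, and the one implicit in \cite{EPW}, is to avoid any decomposition into primitive families: source and target of your Nakayama surjection $\alpha_N$ are finite and torsion-free over $\Lambda_{\rm wt}$, and the two control isomorphisms combined with the Pollack--Weston isomorphism $(\mathfrak{J}_{N^+,1})_{\mathfrak{m}_{1,2}}\simeq(\mathbb{T}^{N^-}_{N,1})_{\mathfrak{m}_{1,2}}$ already force the generic $\Lambda_{\rm wt}$-ranks of the two sides to agree, so the kernel of $\alpha_N$ is $\Lambda_{\rm wt}$-torsion and hence zero. (Relatedly, Hida's finiteness and flatness theorem is for $\mathfrak{h}_N^{\rm ord}$; for the $N^-$-new quotient $\mathbb{T}_N^{N^-}$ you only know, and only need, finiteness and torsion-freeness rather than flatness.)
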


\begin{proof}
As in \cite[Prop.~3.3.1]{EPW}. Note that the version of Hida's control theorem in our context
is provided by \cite[Thm.~9.4]{hida-annals}.
\end{proof}

We can now conclude the proof of Theorem~\ref{thm:3.1.1} just as in \cite[\S{3.3}]{EPW}.
For the convenience of the reader, we include here the argument.

\begin{proof}[Proof of Theorem~\ref{thm:3.1.1}]
Let $\wp_{N,r,k}$ be the product of all the arithmetic primes of $\mathbb{T}^{N^-}_N$ of weight $k$ which
become trivial upon restriction to $1+p^r\Z_p$. By \cite[Thm.~9.4]{hida-annals}, we then have
\begin{equation}\label{control-H0}
(\mathfrak{J}_{N^+})_{\mathfrak{m}}\otimes\mathbb{T}^{N^-}_N/\wp_{N,r,k}
\simeq(\mathfrak{J}_{N^+,r,k})_{\mathfrak{m}_{r,k}}
\end{equation}
where $\mathfrak{m}_{r,k}$ is the maximal ideal of $\mathbb{T}^{N^-}_{N,r,k}$ induced by $\mathfrak{m}$.
Since $(\mathfrak{J}_{N^+})_{\mathfrak{m}}$ is free of rank one over $\mathbb{T}^{N^-}_N$ by Theorem~\ref{thm:3.3.1},
it follows that $(\mathfrak{J}_{N^+,r,k})_{\mathfrak{m}_{r,k}}$ is free of rank one
over $\mathbb{T}^{N^-}_N/\wp_{N,r,k}\simeq\mathbb{T}^{N^-}_{N,r,k}$, as was to be shown.
\end{proof}

\begin{remark}\label{remark3.5}
In the above proofs we made crucial use of \cite[Thm.~9.4]{hida-annals}, which is stated in the context
of totally definite quaternion algebras which are unramified at all finite places, since this is the only
relevant case for the study of Hilbert modular forms over totally real number fields of even degree. However,
the proofs immediately extend to the (simpler) situation of definite quaternion algebras over $\Q$.
\end{remark}

\subsection{Two-variable $p$-adic $L$-functions}\label{subsec:2varL}

By the ``vertical compatibility'' satisfied by Heegner points,
the points
\[
U_p^{-r}\cdot\mathcal{Q}_{n,r}\in(\mathfrak{J}_{N^+,r})_{\mathfrak{m}}
\]
are compatible for varying $r$, thus defining an element
\[
\mathcal{Q}_n^{}:=\varprojlim_r U_p^{-r}\cdot\mathcal{Q}_{n,r}^{}
\in(\mathfrak{J}_{N^+})_{\mathfrak{m}}.
\]

\begin{definition}\label{def:2var}
For any open subset $\sigma\Gamma_n$ of $\Gamma$, define
\[
\mu^{}(\sigma\Gamma_n):=U_p^{-n}\cdot\mathcal{Q}_{n}^{\sigma}
\in(\mathfrak{J}_{N^+})_{\mathfrak{m}}.
\]
\end{definition}

\begin{proposition}
The rule $\mu$ is a measure on $\Gamma$.
\end{proposition}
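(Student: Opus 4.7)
The plan is to verify, for each open compact subset $\sigma\Gamma_n\subset\Gamma$, the distribution relation
\[
\mu(\sigma\Gamma_n)=\sum_{\tau\in\Gamma_n/\Gamma_{n+1}}\mu(\sigma\tau\Gamma_{n+1}).
\]
Unwinding the definition of $\mu$ and using that the Galois action on $(\mathfrak{J}_{N^+})_\mathfrak{m}$ commutes with the action of $U_p$, this reduces to the single identity
\[
U_p\cdot\mathcal{Q}_n=\sum_{\tau\in\Gamma_n/\Gamma_{n+1}}\tau\cdot\mathcal{Q}_{n+1}
\]
in $(\mathfrak{J}_{N^+})_\mathfrak{m}=\varprojlim_r(\mathfrak{J}_{N^+,r})_\mathfrak{m}$.

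Since equality in an inverse limit may be tested on each component, I would check this identity at each finite level $r$. By construction the $r$-th component of $\mathcal{Q}_n$ is $U_p^{-r}\mathcal{Q}_{n,r}$, and similarly for $\mathcal{Q}_{n+1}$, so after cancelling the common factor of $U_p^{-r}$ it suffices to prove
\[
U_p\cdot\mathcal{Q}_{n,r}=\sum_{\tau\in\Gamma_n/\Gamma_{n+1}}\tau\cdot\mathcal{Q}_{n+1,r}
\]
in $(\mathfrak{J}_{N^+,r})_\mathfrak{m}$. But this is precisely the measure relation for the one-variable $\mu_r$, established in the preceding proposition: concretely, the right-hand side unfolds as the iterated corestriction $\mathrm{Cor}_{H_{p^{n+2+r}}/K_n}(\mathcal{P}_{p^{n+2},r}\otimes\zeta_r)$, and the vertical compatibility of the Heegner points $\sum_{\sigma\in\Gal(H_{p^{n+2+r}}/H_{p^{n+1+r}})}\mathcal{P}_{p^{n+2},r}^{\sigma}=U_p\cdot\mathcal{P}_{p^{n+1},r}$, followed by $\mathrm{Cor}_{H_{p^{n+1+r}}/K_n}$, identifies it with $U_p\cdot\mathcal{Q}_{n,r}$.

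The only point requiring separate attention is ensuring that $\{U_p^{-r}\mathcal{Q}_{n,r}\}_r$ is indeed an inverse system under the transition maps $\widetilde{\alpha}_r$, so that $\mathcal{Q}_n$ is well-defined in $(\mathfrak{J}_{N^+})_\mathfrak{m}$; but this is the content of the horizontal compatibility of the Heegner points and is already invoked in the paragraph preceding the proposition. Thus no genuine obstacle arises: the claim is a formal consequence of the one-variable case, the $U_p^{-r}$-correction being transparent to the distribution relation which lives entirely in the $\Gamma$-direction.
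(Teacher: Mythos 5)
Your proof is correct and takes essentially the same route as the paper, which disposes of the proposition in one line by appealing to the compatibility relations of the Heegner points: your reduction to the finite-level identity $U_p\cdot\mathcal{Q}_{n,r}=\sum_{\tau}\tau\cdot\mathcal{Q}_{n+1,r}$ (legitimate since $U_p$ acts invertibly on the ordinary part, so the factor $U_p^{-r}$ cancels) is exactly the distribution relation underlying the one-variable case, obtained by unfolding the corestrictions and the conductor-direction compatibility at fixed $r$. The only cosmetic point is that the paper's one-line proof cites ``horizontal compatibility'' while the relation actually used is the one its bullet list labels ``vertical''; your identification of which relation does the work is the right one.
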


\begin{proof}
This follows immediately from the ``horizontal compatibility'' of Heegner points.
\end{proof}

Upon the choice of an isomorphism $\alpha_N$ as in Theorem~\ref{thm:3.3.1},
we may regard $\mu$ as an element
\[
\mathcal{L}(\mathfrak{m},N)\in(\mathbb{T}^{N^-}_N)_{\mathfrak{m}}\hat{\otimes}_{\Z_p}\Z_p\pwseries{\Gamma}.
\]

Denoting by $\mathcal{L}(\mathfrak{m},N)^*$ the image of $\mathcal{L}(\mathfrak{m},N)$ under the
involution induced by $\gamma\mapsto\gamma^{-1}$ on group-like elements, we set
\[
L(\mathfrak{m},N):=\mathcal{L}(\mathfrak{m},N)\cdot\mathcal{L}(\mathfrak{m},N)^*,
\]
to which we will refer as the \emph{two-variable $p$-adic $L$-function attached to $(\mathbb{T}^{N^-}_N)_{\mathfrak{m}}$}.

\subsection{Two-variable $p$-adic $L$-functions on branches of the Hida family}\label{subsec:2var-branch}

Let $\mathbb{T}_\Sigma$ be the
universal $p$-ordinary Hecke algebra
\begin{equation}\label{eq:2.4.2}
\mathbb{T}_\Sigma:=(\mathbb{T}_{N(\Sigma)}')_{\mathfrak{m}}\simeq(\mathbb{T}_{N(\Sigma)}^{N^-})_{\mathfrak{n}}
\end{equation}
associated with a mod $p$ representation $\bar{\rho}$ and a finite set of primes $\Sigma$ as in Section~\ref{subsec:residual}.

\begin{remark}\label{rem:split}
Recall that $N^-\mid N(\bar\rho)$ by Assumption~(SU). Throughout the following,
it will be further assumed that every prime factor of $N(\Sigma)/N^-$ splits in $K$. In particular, 
every prime $\ell\in\Sigma$ splits in $K$, and any $f\in\mathcal{H}^-(\bar\rho)={\rm Spec}(\mathbb{T}_\Sigma)$
has tame level $N_f$ with
\[
N_f^-=N(\bar\rho)^-=N^-.
\]
\end{remark}

The construction of the preceding section produces a two-variable $p$-adic $L$-function
\[
L(\mathfrak{n},N(\Sigma))\in(\mathbb{T}_{N(\Sigma)}^{N^-})_{\mathfrak{n}}\hat{\otimes}_{\Z_p}\Z_p\pwseries{\Gamma}
\]
which combined with the isomorphism $(\ref{eq:2.4.2})$ yields an element
\[
L_\Sigma(\bar{\rho})\in\mathbb{T}_\Sigma\hat{\otimes}_{\Z_p}\Z_p\pwseries{\Gamma}.
\]
If $\mathfrak{a}$ is a minimal prime of $\mathbb{T}_\Sigma$, we thus obtain an element
\[
L_\Sigma(\bar{\rho},\mathfrak{a})\in\mathbb{T}(\mathfrak{a})^\circ\hat{\otimes}_{\Z_p}\Z_p\pwseries{\Gamma}
\]
by reducing $L_\Sigma(\bar{\rho})$ mod $\mathfrak{a}$ (see \S\ref{sec:branches}). On the other hand, if we
let $\mathfrak{m}$ denote the inverse image of the maximal ideal of $\mathbb{T}(\mathfrak{a})^\circ$
under the composite surjection
\begin{equation}\label{eq:3.7}
\mathbb{T}^{N^-}_{N(\mathfrak{a})}\longrightarrow\mathbb{T}_{N(\mathfrak{a})}^{\rm new}
\longrightarrow\mathbb{T}_{N(\mathfrak{a})}^{\rm new}/\mathfrak{a}'=\mathbb{T}(\mathfrak{a})^\circ,
\end{equation}
the construction of the preceding section yields an $L$-function
\[
L(\mathfrak{m},N(\mathfrak{a}))\in(\mathbb{T}_{N(\mathfrak{a})}^{N^-})_{\mathfrak{m}}\hat{\otimes}_{\Z_p}\Z_p\pwseries{\Gamma}
\]
giving rise, via $(\ref{eq:3.7})$, to a second element
\[
L(\bar\rho,\mathfrak{a})\in\mathbb T(\mathfrak a)^\circ\hat{\otimes}_{\Z_p}\Z_p\pwseries{\Gamma}.
\]

It is natural to compare $L_\Sigma(\bar{\rho},\mathfrak{a})$ and $L(\bar{\rho},\mathfrak{a})$, a task that is
carried out in the next section, and provides the key for understanding the variation of
\emph{analytic} Iwasawa invariants.

\subsection{Comparison}\label{subsec:comparison}

Write $\Sigma=\{\ell_1,\dots,\ell_n\}$ and for each $\ell=\ell_i\in\Sigma$, let
$e_\ell$ be the valuation of $N(\Sigma)/N(\mathfrak a)$ at $\ell$,
and define the reciprocal Euler factor $E_{\ell}(\mathfrak{a},X)\in\mathbb T(\mathfrak a)^\circ[X]$ by
\[
E_{\ell}(\mathfrak{a},X):=
\begin{cases}
1& \text{ if $e_\ell=0$;}\\
1-(T_{\ell}\;{\rm mod}\;{\mathfrak{a}'})\Theta^{-1}(\ell)X&\text{ if $e_\ell=1$;}\\
1-(T_{\ell}\;{\rm mod}\;{\mathfrak{a}'})\Theta^{-1}(\ell)X
+\ell X^2&\text{ if $e_\ell=2$.}
\end{cases}
\]

Also, writing $\ell=\mathfrak{l}\bar{\mathfrak{l}}$,
define $E_\ell(\mathfrak{a})\in\mathbb T(\mathfrak a)^\circ\hat{\otimes}_{\Z_p}\Z_p\pwseries{\Gamma}$ by
\begin{equation}\label{def:e-a}
E_\ell(\mathfrak{a}):=E_\ell(\mathfrak{a},\ell^{-1}\gamma_{\mathfrak{l}})
\cdot E_\ell(\mathfrak{a},\ell^{-1}\gamma_{\bar{\mathfrak{l}}}),
\end{equation}
where $\gamma_{\mathfrak{l}}$, $\gamma_{\bar{\mathfrak{l}}}$ are arithmetic Frobenii at $\mathfrak{l}$, $\bar{\mathfrak{l}}$
in $\Gamma$, respectively, and put $E_\Sigma(\mathfrak a):=\prod_{\ell\in\Sigma}E_{\ell}(\mathfrak a)$.
\sk

Recall that $N^-\mid N(\mathfrak{a})\mid N(\Sigma)$ and set
\[
N(\mathfrak{a})^+:=N(\mathfrak{a})/N^-,\quad\quad N(\Sigma)^+:=N(\Sigma)/N^-,
\]
both of which consist entirely of prime factors 
which split in $K$. The purpose of this section is to prove the following result.

\begin{theorem}\label{thm:3.6.2}
There is an isomorphism of $\mathbb{T}(\mathfrak{a})^\circ$-modules
\[
\mathbb{T}(\mathfrak{a})^\circ\otimes_{(\mathbb{T}_{N(\Sigma)}^{N^-})_{\mathfrak{n}}}(\mathfrak{J}_{N(\Sigma)^+})_{\mathfrak{n}}\;\simeq\;
\mathbb{T}(\mathfrak{a})^\circ\otimes_{(\mathbb{T}_{N(\mathfrak{a})}^{N^-})_{\mathfrak{m}}}(\mathfrak{J}_{N(\mathfrak{a})^+})_{\mathfrak{m}}
\]
such that the map induced on the corresponding spaces of measures valued in these modules
sends $L_\Sigma(\bar\rho,\mathfrak{a})$ to $E_\Sigma(\mathfrak{a})\cdot L(\bar\rho,\mathfrak a)$.
\end{theorem}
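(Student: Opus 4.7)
The plan is to prove the theorem by induction on the number of primes in $\Sigma$ (counted with appropriate multiplicity when $e_\ell=2$), thereby reducing to the case $N(\Sigma)=N(\mathfrak{a})\cdot\ell^{e_\ell}$ with $\ell$ a single split prime. By Remark~\ref{rem:split} every $\ell\in\Sigma$ is split in $K$, and since $E_\Sigma(\mathfrak{a})=\prod_{\ell\in\Sigma}E_\ell(\mathfrak{a})$ is multiplicative, the general case follows from iteration.

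The isomorphism of modules in the single-prime case is a formal consequence of Theorem~\ref{thm:3.3.1}: both $(\mathfrak{J}_{N(\Sigma)^+})_\mathfrak{n}$ and $(\mathfrak{J}_{N(\mathfrak{a})^+})_\mathfrak{m}$ are free of rank one over $(\mathbb{T}^{N^-}_{N(\Sigma)})_\mathfrak{n}$ and $(\mathbb{T}^{N^-}_{N(\mathfrak{a})})_\mathfrak{m}$, respectively, and both Hecke algebras surject onto $\mathbb{T}(\mathfrak{a})^\circ$ via the composite in Section~\ref{sec:branches}, so each side becomes a free rank-one $\mathbb{T}(\mathfrak{a})^\circ$-module. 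The natural candidate for the identification is realized through the degeneracy maps $\widetilde{\pi}_{j,*}:\mathfrak{D}^\dagger_{N(\Sigma)^+,r}\to\mathfrak{D}^\dagger_{N(\mathfrak{a})^+,r}$ (for $j=1,2$, or three maps if $e_\ell=2$) arising from the inclusions of compact open subgroups at $\ell$; these induce $\mathbb{T}(\mathfrak{a})^\circ$-linear maps on the inverse limits after passing to $\mathfrak{m}$-localization, Jacquet--Langlands transfer, and reduction modulo $\mathfrak{a}'$.

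The heart of the argument is the computation of how the Heegner system $\mathcal{Q}^{(\ell)}_n\in(\mathfrak{J}_{N(\Sigma)^+})_\mathfrak{n}$ maps into $(\mathfrak{J}_{N(\mathfrak{a})^+})_\mathfrak{m}$ under these degeneracy maps. The essential input is the split-prime norm relation: expressing $\widetilde{\pi}_{j,*}(\mathcal{P}^{(\ell)}_{p^{n+1},r}\otimes\zeta_r)$ as a combination of Galois conjugates of $\mathcal{P}_{p^{n+1},r}\otimes\zeta_r$ by the arithmetic Frobenii $\gamma_\mathfrak{l}$ and $\gamma_{\bar{\mathfrak{l}}}$ in $\Gamma$, with Hecke-operator coefficients. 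This is derived directly from the double-coset description of Section~\ref{subsec:Sh} combined with the reciprocity law for the Galois action, and is the definite-quaternionic analog of the modular-symbol norm relation used in \cite[\S 3.6]{EPW}. Applying the corestriction $\mathrm{Cor}_{H_{p^{n+1+r}}/K_n}$ in~\eqref{def:Q}, renormalizing by $U_p^{-n-r}$, and passing to the inverse limit translates the norm relation into a product identity of measures on $\Gamma$; the contribution of the two primes $\mathfrak{l},\bar{\mathfrak{l}}$ above $\ell$ assembles into the factor $E_\ell(\mathfrak{a},\ell^{-1}\gamma_\mathfrak{l})\cdot E_\ell(\mathfrak{a},\ell^{-1}\gamma_{\bar{\mathfrak{l}}})=E_\ell(\mathfrak{a})$ after forming the symmetrized product $\mathcal{L}\cdot\mathcal{L}^*$, with the twist by $\Theta^{-1}$ accounting for the appearance of $\Theta^{-1}(\ell)$ in the coefficients of $E_\ell(\mathfrak{a},X)$.

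The principal obstacle is establishing the explicit form of the split-prime norm relation, particularly in the case $e_\ell=2$, where both the $U_\ell$-eigenvalue at the raised level and the quadratic term $\ell X^2$ must be correctly recovered; this requires unwinding the three degeneracy maps between $\widetilde{X}_{N(\mathfrak{a})^+\ell^2,r}$ and $\widetilde{X}_{N(\mathfrak{a})^+,r}$ and comparing Hecke operators across levels, using the $p$-distinguishedness hypothesis of (SU) to guarantee that the relevant Jacquet--Langlands modules remain free of rank one after passing through the degeneracy maps. A secondary technical point is normalizing the isomorphisms $\alpha_N$ of Theorem~\ref{thm:3.3.1} at the two levels compatibly with the chosen degeneracy map, so that the identification of Step~1 does not introduce an unintended unit ambiguity in the final equality.
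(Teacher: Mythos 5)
Your construction of the comparison map and the split--prime norm relations matches the paper's argument: the paper also defines $\epsilon_r=\epsilon(\ell_n)\circ\cdots\circ\epsilon(\ell_1)$ out of the pushforwards $(B_{d,d'})_*$ with coefficients $1-\ell^{-1}T_\ell X$ (resp.\ the quadratic factor when $e_\ell=2$), computes $(B_{\ell,1})_*$, $(B_{\ell,\ell})_*$, $(B_{\ell^2,\ell^i})_*$ on the points $\mathcal{P}^{(\ell)}\otimes\zeta_r$, $\mathcal{P}^{(\ell^2)}\otimes\zeta_r$ in terms of $\sigma_{\mathfrak l}$-conjugates twisted by $\Theta^{-1}$, and obtains the factor $E_\ell(\mathfrak{a},\ell^{-1}\gamma_{\mathfrak l})E_\ell(\mathfrak{a},\ell^{-1}\gamma_{\bar{\mathfrak l}})$ exactly from the symmetrized product $\mathcal{L}\cdot\mathcal{L}^*$ as you describe. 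So the ``measure transformation'' half of your proposal is sound.

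The genuine gap is in your first step: you assert that the isomorphism of modules ``is a formal consequence of Theorem~\ref{thm:3.3.1}'' because both sides are free of rank one over $\mathbb{T}(\mathfrak{a})^\circ$. Freeness only gives an \emph{abstract} isomorphism; the theorem requires that the \emph{specific} map induced by $\epsilon_r$ (the one that sends $L_\Sigma(\bar\rho,\mathfrak{a})$ to $E_\Sigma(\mathfrak{a})\cdot L(\bar\rho,\mathfrak a)$) be an isomorphism. A $\mathbb{T}(\mathfrak{a})^\circ$-linear map between free rank-one modules over this local ring is multiplication by some element, and nothing so far rules out that this element is a non-unit; indeed this is exactly where level-raising congruences could kill surjectivity. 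The paper's proof devotes its final part to precisely this point: using Theorem~\ref{thm:3.3.1} and Hida's control theorem one reduces (as in \cite[p.559]{EPW}) to the injectivity modulo $p$ of the dual map $\epsilon_\ell^*$ on $H^0(\widetilde{X}_{M^+,1};\mathbf{F}_p)^{\rm ord}[\mathfrak{m}]$ (the maps $(\ref{3.15})$ and $(\ref{3.17})$), and this injectivity is Lemma~\ref{lem:3.8.2}, which is a quaternionic analog of Ihara's lemma supplied by \cite[Lemma~3.26]{SW} for $n_\ell=1$ and \cite[Lemma~3.28]{SW} for $n_\ell=2$, applied inductively over the primes of $\Sigma$. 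Your proposal never addresses this, and the obstacles you single out instead (the explicit shape of the $e_\ell=2$ norm relation, and compatible normalization of the $\alpha_N$) are comparatively routine --- in fact the map $(\ref{3.11})$ is defined directly on the modules $\mathfrak{J}$, so no compatibility of the $\alpha_N$'s enters the proof. Without the Ihara-type injectivity input, the claimed isomorphism is unproved.
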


\begin{proof}
The proof follows closely the constructions and arguments in \cite[\S{3.8}]{EPW}, suitably
adapted to the quaternionic setting at hand.
Let $r\geqslant 1$. If $M$ is any positive integer with $(M,pN^-)=1$, and $d'\mid d$ are divisors of $M$,
we have degeneracy maps
\[
B_{d,d'}:\widetilde X_{M,r}\longrightarrow\widetilde X_{M/d,r}
\]
induced by $(\Psi,g)\mapsto(\Psi,\pi_{d'}g)$, where
$\pi_{d'}\in\widehat{B}^\times$ has local component $\smallmat 100{\ell^{\val_\ell(d')}}$ at every prime $\ell\mid d'$
and $1$ outside $d'$. We thus obtain a map on homology
\[
(B_{d,d'})_*:e^{\rm ord}H_0(\widetilde X_{M,r},\Z_p)\longrightarrow e^{\rm ord}H_0(\widetilde X_{M/d,r},\Z_p)
\]
and we may define
\begin{equation}\label{eq:eps_r}
\epsilon_r:e^{\rm ord}H_0(\widetilde{X}_{N(\Sigma)^+,r},\Z_p)\longrightarrow e^{\rm ord}H_0(\widetilde{X}_{N(\mathfrak{a})^+,r},\Z_p)
\end{equation}
by $\epsilon_r:=\epsilon(\ell_n)\circ\cdots\circ\epsilon(\ell_1)$, where
for every $\ell=\ell_i\in\Sigma$ we put
\[
\epsilon(\ell):=
\begin{cases}1 & \textrm{if $e_\ell=0$;}\\
(B_{\ell,1})_*-(B_{\ell,\ell})_*\ell^{-1}T_\ell & \textrm{if $e_\ell=1$;}\\
(B_{\ell^2,1})_*-(B_{\ell^2,\ell})_*\ell^{-1}T_\ell+(B_{\ell^2,\ell^2})_*\ell^{-1}
\langle\ell\rangle_{N(\mathfrak{a})p} & \textrm{if $e_\ell=2$}.
\end{cases}
\]

As before, let $M$ be a positive integer with $(M,pN^-)=1$ all of whose prime factors split in $K$, and
let $\ell\nmid Mp$  be a prime which also splits in $K$. We shall adopt the following simplifying notations for the system of
points $\widetilde{P}_{p^n,r}\in\widetilde{X}_{N^+,r}$ constructed in Section~\ref{subsec:construct}:
\[
P:=\widetilde{P}_{p^n,r}^{(M)}\in\widetilde{X}_{M,r},\quad
P^{(\ell)}:=\widetilde{P}_{p^n,r}^{(M\ell)}\in\widetilde{X}_{M\ell,r},\quad
P^{(\ell^2)}:=\widetilde{P}_{p^n,r}^{(M\ell^2)}\in\widetilde{X}_{M\ell^2,r}.
\]

It is easy to check that we have the following relations in $\widetilde{X}_{M,r}$:
\begin{itemize}
\item
$(B_{\ell, 1})_*(P ^{(\ell)}) = P$
\item
$(B_{\ell,\ell})_*(P ^{(\ell)}) = P ^{\sigma_{{\mathfrak{l}}}}$
\item
$(B_{\ell^2, 1})_*({P ^{(\ell^2)}}) = P $
\item
$(B_{\ell^2,\ell})_*(P ^{(\ell^2)}) = P^{\sigma_{{\mathfrak{l}}}}$
\item
$(B_{\ell^2,\ell^2})_*(P ^{(\ell^2)}) = P^{ {\sigma_{{\mathfrak{l}}}^2}}$
\end{itemize}
where $\sigma_{\mathfrak{l}}\in{\rm Gal}(L_{p^n,r}/K)$ is a Frobenius element at a prime $\mathfrak{l}\mid\ell$.
Letting $\mathcal{P}$ denote the image of $e^{\rm ord}P$ in $\mathfrak{D}_{M,r}$, and defining
$\mathcal{P}^{(\ell)}\in\mathfrak{D}_{M\ell,r}$ and $\mathcal{P}^{(\ell^2)}\in\mathfrak{D}_{M\ell^2,r}$
similarly, 
it follows that
\begin{itemize}
\item $(B_{\ell, 1})_*(\mathcal{P}^{(\ell)}\otimes\zeta_r)=
\mathcal P\otimes\zeta_r$
\item $(B_{\ell,\ell})_*(\mathcal P^{(\ell)}\otimes\zeta_r)=
\mathcal P^{\sigma_{{\mathfrak{l}}}}\otimes\zeta_r=
\Theta^{-1}(\sigma_\mathfrak{l})\cdot(\mathcal P\otimes\zeta_r)^{\sigma_\mathfrak{l}}$
\item $(B_{\ell^2, 1})_*(\mathcal P ^{(\ell^2)}\otimes\zeta_r) =
\mathcal P\otimes\zeta_r$
\item $(B_{\ell^2,\ell})_*(\mathcal P ^{(\ell^2)}\otimes\zeta_r) =
\mathcal P ^{\sigma_{{\mathfrak{l}}}}\otimes\zeta_r=
\Theta^{-1}(\sigma_\mathfrak{l})\cdot(\mathcal P\otimes\zeta_r)^{\sigma_\mathfrak{l}}$
\item $(B_{\ell^2,\ell^2})_*(\mathcal P ^{(\ell^2)}\otimes\zeta_r)
=\mathcal P^{\sigma^2_{{\mathfrak{l}}}}\otimes\zeta_r=
\Theta^{-2}(\sigma_\mathfrak{l})\cdot(\mathcal P \otimes\zeta_r)^{\sigma_\mathfrak{l}}$
\end{itemize}
as elements in $\mathfrak{D}_{M,r}^\dagger$. Finally, setting
$\mathcal{Q}_{}:={\rm Cor}_{H_{p^{n+1+r}}/K_n}(\mathcal{P})\in H^0(K_n,\mathfrak{D}_{M,r}^\dagger)$,
and defining $\mathcal{Q}_{}^{(\ell)}\in H^0(K_n,\mathfrak{D}_{M\ell,r}^\dagger)$ and
$\mathcal{Q}_{}^{(\ell^2)}\in H^0(K_n,\mathfrak{D}_{M\ell^2,r}^\dagger)$ similarly, we see that
\begin{itemize}
\item $(B_{\ell, 1})_*(\mathcal{Q}^{(\ell)})=\mathcal Q$
\item $(B_{\ell,\ell})_*(\mathcal{Q}^{(\ell)})=
\Theta^{-1}(\sigma_{\mathfrak{l}})\cdot\mathcal Q^{\sigma_{\mathfrak{l}}}$
\item $(B_{\ell^2, 1})_*({\mathcal Q ^{(\ell^2)})}=\mathcal Q$
\item $(B_{\ell^2,\ell})_*(\mathcal Q ^{(\ell^2)})=\Theta^{-1}(\sigma_\mathfrak{l})\cdot\mathcal Q^{\sigma_{\mathfrak{l}}}$
\item $(B_{\ell^2,\ell^2})_*(\mathcal Q ^{(\ell^2)})=\Theta^{-2}(\sigma_\mathfrak{l})\cdot\mathcal Q^{\sigma^2_{\mathfrak{l}}}$
\end{itemize}
in $H^0(K_n,\mathfrak{D}_{M,r}^\dagger)$. Each of these equalities is checked by an explicit calculation.
For example, for the second one:
\begin{align*}
(B_{\ell,\ell})_*(\mathcal Q^{(\ell)})
&=(B_{\ell,\ell})_*\left({\rm Cor}_{H_{p^{n+1+r}}/K_n}(\mathcal P^{(\ell)}\otimes\zeta_r)\right)\\
&=(B_{\ell,\ell})_*\left(\Bigl(\sum_{\sigma\in{\rm Gal}(H_{p^{n+1+r}}/K_n)}\Theta(\tilde{\sigma}^{-1})\cdot
(\mathcal P^{(\ell)})^{\tilde\sigma}\Bigr)\otimes\zeta_r\right)\\
&=\sum_{\sigma\in{\rm Gal}(H_{p^{n+1+r}}/K_n)}\Theta(\tilde\sigma^{-1})
\cdot(B_{\ell,\ell})_*((\mathcal P^{(\ell)})^{\tilde{\sigma}}\otimes\zeta_r)\\
&=\sum_{\sigma\in{\rm Gal}(H_{p^{n+1+r}}/K_n)}\Theta(\tilde\sigma^{-1})
\Theta^{-1}(\sigma_\mathfrak{l})\cdot(\mathcal P^{\tilde{\sigma}}\otimes\zeta_r)^{\sigma_{\mathfrak{l}}}\\
&=\Theta^{-1}(\sigma_\mathfrak{l})\cdot \mathcal Q^{\sigma_\mathfrak{l}}.
\end{align*}

Now let $\mathcal{Q}_{n,r}\in\mathfrak{J}_{N(\Sigma)^+,r}$ be as in $(\ref{def:Q})$ with $N=N(\Sigma)$.
Using the above formulae, we easily see that of any finite order character $\chi$ of
$\Gamma$ of conductor $p^n$, the effect of $\epsilon_r$ on the element
$\sum_{\sigma\in\Gamma/\Gamma_n}\chi(\sigma)\mathcal{Q}_{n,r}^\sigma$ is given by multiplication by
\[
\prod_{e_{\ell_i}=1}
(1-(\chi\Theta)^{-1}(\sigma_{\mathfrak{l}_i})\ell_i^{-1}T_{\ell_i})
\prod_{e_{\ell_i}=2}(1-(\chi\Theta)^{-1}(\sigma_{\mathfrak{l}_i})\ell_i^{-1}T_{\ell_i}
+(\chi\Theta)^{-2}(\sigma_{\mathfrak{l}_i})\ell_i^{-1}\langle\ell_i\rangle_{N(\mathfrak{a})p}).
\]
Similarly, we see that $\epsilon_r$ has the effect of multiplying the element
$\sum_{\sigma\in\Gamma/\Gamma_n}\chi^{-1}(\sigma)\mathcal{Q}_{n,r}^\sigma$ by
\[
\prod_{e_{\ell_i}=1}
(1-(\chi^{-1}\Theta)^{-1}(\sigma_{\mathfrak{l}_i})\ell_i^{-1}T_{\ell_i})
\prod_{e_{\ell_i}=2}(1-(\chi^{-1}\Theta)^{-1}(\sigma_{\mathfrak{l}_i})\ell_i^{-1}T_{\ell_i}
+(\chi^{-1}\Theta)^{-2}(\sigma_{\mathfrak{l}_i})\ell_i^{-1}\langle\ell_i\rangle_{N(\mathfrak{a})p}).
\]
Hence, using the relations
\[
\chi(\sigma_{\bar{\mathfrak{l}}_i})=\chi^{-1}(\sigma_{\mathfrak{l}_i}),
\quad\quad\Theta(\sigma_{\mathfrak{l}_i})=\Theta(\sigma_{\bar{\mathfrak{l}}_i})=\theta(\ell_i),
\quad\quad\theta^2(\ell_i)=\langle\ell_i\rangle_{N(\mathfrak{a})p},
\]
it follows that the effect of $\epsilon_r$ on the product of the above two elements
is given by multiplication by
\[
\prod_{\substack{\mathfrak{l}_i\mid \ell_i\\e_{\ell_i}=1}}(1-\chi(\sigma_{\mathfrak{l}_i})\theta^{-1}(\ell_i)\ell_i^{-1}T_{\ell_i})
\prod_{\substack{\mathfrak{l}_i\mid \ell_i\\e_{\ell_i}=2}}(1-\chi(\sigma_{\mathfrak{l}_i})\theta^{-1}(\ell_i)\ell_i^{-1}T_{\ell_i}+
\chi^{2}(\sigma_{\mathfrak{l}_i})\ell_i^{-1}).
\]

Taking the limit over $r$, 
we thus obtain 
a $\mathbb{T}(\mathfrak{a})^\circ$-linear map
\begin{equation}\label{3.11}
\mathbb{T}(\mathfrak{a})^\circ\otimes_{(\mathbb{T}_{N(\Sigma)}^{N^-})_{\mathfrak{n}}}(\mathfrak{J}_{N(\Sigma)^+})_{\mathfrak{n}}\longrightarrow
\mathbb{T}(\mathfrak{a})^\circ\otimes_{(\mathbb{T}_{N(\mathfrak{a})}^{N^-})_{\mathfrak{m}}}(\mathfrak{J}_{N(\mathfrak{a})^+})_{\mathfrak{m}}
\end{equation}
having as effect on the corresponding measures as stated in Theorem~\ref{thm:3.6.2}.
Hence to conclude the proof it remains to show that $(\ref{3.11})$ is an isomorphism.

By Theorem~\ref{thm:3.3.1}, both the source and the target of this map are free of rank one over $\mathbb{T}(\mathfrak{a})^\circ$,
and as in \cite[p.559]{EPW} (using \cite[Thm.~9.4]{hida-annals}), one is reduced to showing
the injectivity of the dual map modulo $p$:
\begin{equation}\label{3.15}
\begin{split}
H^0(\widetilde{X}_{N(\mathfrak{a})^+,1};\mathbf{F}_p)^{\rm ord}[\mathfrak{m}]
&\longrightarrow(\mathbb{T}_{N(\mathfrak{a})}^{N^-}/\mathfrak{m})
\otimes_{\mathbb{T}_{N(\Sigma)}^{N^-}/\mathfrak{n}}(H^0(\widetilde{X}_{N(\mathfrak{a})^+,1};\mathbf{F}_p)^{\rm ord}[\mathfrak{m}'])\\
&\longrightarrow(\mathbb{T}_{N(\mathfrak{a})}^{N^-}/\mathfrak{m})
\otimes_{\mathbb{T}_{N(\Sigma)}^{N^-}/\mathfrak{n}}(H^0(\widetilde{X}_{N(\Sigma)^+,1};\mathbf{F}_p)^{\rm ord}[\mathfrak{m}'])\\
&\longrightarrow(\mathbb{T}_{N(\mathfrak{a})}^{N^-}/\mathfrak{m})
\otimes_{\mathbb{T}_{N(\Sigma)}^{N^-}/\mathfrak{n}}(H^0(\widetilde{X}_{N(\Sigma)^+,1};\mathbf{F}_p)^{\rm ord}[\mathfrak{n}]);
\end{split}
\end{equation}
or equivalently (by a version of \cite[Lemma~3.8.1]{EPW}), to showing that the composite of the first
two arrows in (\ref{3.15}) is injective.

In turn, the latter injectivity follows from Lemma~\ref{lem:3.8.2} below, where the notations are as follows:
\begin{itemize}
\item{} $M^+$ is any positive integer with $(M^+,pN^-)=1$;
\item{} $\ell\neq p$ is a prime;
\item{} $n_\ell=1$ or $2$ according to whether or not $\ell$ divides $M^+$;
\item{} $N^+:=\ell^{n_\ell}M^+$,
\end{itemize}
and
\begin{equation}\label{3.17}
\epsilon_\ell^*:H^0(\widetilde{X}_{M^+,1};\mathbf{F}_p)^{\rm ord}[\mathfrak{m}]\longrightarrow(\mathbb{T}^{N^-}_{M^+N^-}/\mathfrak{m})
\otimes_{\mathbb{T}_{N^+N^-}'/\mathfrak{m}'}(H^0(\widetilde{X}_{N^+,1};\mathbf{F}_p)^{\rm ord}[\mathfrak{m}'])
\end{equation}
is the map defined by
\[
\epsilon_\ell^*:=
\left\{
\begin{array}{ll}
B_{\ell,1}^*-B_{\ell,\ell}^*\ell^{-1}T_\ell & \textrm{if $n_\ell=1$;}\\
B_{\ell^2,1}^*-B_{\ell^2,\ell}^*\ell^{-1}T_\ell+B_{\ell^2,\ell^2}^*\ell^{-1}
\langle\ell\rangle_{N(\mathfrak{a})p} & \textrm{if $n_\ell=2$}.
\end{array}
\right.
\]

\begin{lemma}\label{lem:3.8.2}
The map $(\ref{3.17})$ is injective.
\end{lemma}

\begin{proof}
As in the proof of the analogous result \cite[Lemma~3.8.2]{EPW} in the modular curve case,
it suffices to show the injectivity of the map
\[
(H^0(\widetilde{X}_{M^+,1};\mathbf{F})^{\rm ord}[\mathfrak{m}_{\mathbf{F}}])^{n_\ell+1}\xrightarrow{\;\beta_\ell\;}
H^0(\widetilde{X}_{N^+,1};\mathbf{F})^{\rm ord}[\mathfrak{m}_{\mathbf{F}}']
\]
defined by
\[
\beta_\ell:=
\left\{
\begin{array}{ll}
B_{\ell,1}^*\pi_1+B_{\ell,\ell}^*\pi_2 & \textrm{if $n_\ell=1$;}\\
B_{\ell^2,1}^*\pi_1+B_{\ell^2,\ell}^*\pi_2+B_{\ell^2,\ell^2}^*\pi_3 & \textrm{if $n_\ell=2$}.
\end{array}
\right.
\]

But in our quaternionic setting the proof of this injectivity follows from \cite[Lemma~3.26]{SW} for $n_\ell=1$
and [\emph{loc.cit.}, Lemma~3.28] for $n_\ell=2$.
\end{proof}

Applying inductively Lemma~\ref{lem:3.8.2} to the primes in $\Sigma$,
the proof of Theorem~\ref{thm:3.6.2} follows.
\end{proof}

\subsection{Analytic Iwasawa invariants}

Upon the choice of an isomorphism
\[
\Z_p\pwseries{\Gamma}\simeq\Z_p\pwseries{T}
\]
we may regard the $p$-adic $L$-functions $L_\Sigma(\bar\rho,\mathfrak{a})$ and $L(\bar\rho,\mathfrak{a})$,
as well as the Euler factor $E_\Sigma(\bar\rho,\mathfrak{a})$, as elements
in $\mathbb{T}(\mathfrak{a})^\circ\pwseries{T}$. In this section we apply the main result of the
preceding section to study the variation of the Iwasawa invariants attached to the anticyclotomic
$p$-adic $L$-functions of $p$-ordinary modular forms.
\sk

For any power series $f(T)\in R\pwseries{T}$ with coefficients in a ring $R$,
the \emph{content} of $f(T)$ is defined to be the ideal $I(f(T))\subseteq R$ generated
by the coefficients of $f(T)$. If $\wp$ is a height one prime of $\mathbb{T}_\Sigma$ belonging to the branch $\mathbb T(\mathfrak a)$
(in the sense that $\mathfrak{a}$ is the unique minimal prime of $\mathbb{T}_\Sigma$ contained in $\wp$),
we denote by $L(\bar\rho,\mathfrak a)(\wp)$ the element of $\mathcal O_\wp\pwseries{\Gamma}$
obtained from $L(\bar\rho,\mathfrak a)$ via reduction modulo $\wp$. In particular, we note that
$L(\bar\rho,\mathfrak{a})(\wp)$ has unit content if and only if its $\mu$-invariant vanishes.

\begin{theorem}\label{3.7.5}
The following are equivalent:
\begin{enumerate}
\item $\mu(L(\bar\rho,\mathfrak a)(\wp))=0$ for some newform $f_\wp$ in $\mathcal{H}^-(\bar\rho)$.
\item $\mu(L(\bar\rho,\mathfrak a)(\wp))=0$ for every newform $f_\wp$ in $\mathcal{H}^-(\bar\rho)$.
\item $L(\bar{\rho},\mathfrak{a})$ has unit content for some irreducible component $\mathbb{T}(\mathfrak{a})$ of $\mathcal{H}^-(\bar\rho)$.
\item $L(\bar{\rho},\mathfrak{a})$ has unit content for every irreducible component $\mathbb{T}(\mathfrak{a})$ of $\mathcal{H}^-(\bar\rho)$.
\end{enumerate}
\end{theorem}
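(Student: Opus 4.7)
The plan is to prove the cycle (2)$\Rightarrow$(1)$\Rightarrow$(3)$\Rightarrow$(4)$\Rightarrow$(2). The implications (2)$\Rightarrow$(1) and (4)$\Rightarrow$(3) are trivial, so the real content is in (1)$\Rightarrow$(3) and (3)$\Rightarrow$(4), the latter being the propagation of unit content across branches of $\mathcal{H}(\bar\rho)$.

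First I would record the following ring-theoretic observation: for a complete Noetherian local ring $R$ with residue field $k$, a power series $F \in R\pwseries{T}$ has unit content if and only if its image $\bar F \in k\pwseries{T}$ is nonzero. Applying this to $R = \mathbb{T}(\mathfrak{a})^\circ$ (with residue field $\mathbf{F}$) and $R = \cO_\wp^\circ$ (with residue field $\mathbf{F}_\wp \supseteq \mathbf{F}$), and using that the surjection $\mathbb{T}(\mathfrak{a})^\circ \twoheadrightarrow \cO_\wp^\circ$ is local, I see that $\mu(L(\bar\rho,\mathfrak{a})(\wp)) = 0$ iff $L(\bar\rho,\mathfrak{a}) \bmod \mathfrak{m}_{\mathbb{T}(\mathfrak{a})^\circ}$ is nonzero in $\mathbf{F}\pwseries{T}$. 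This criterion is independent of the chosen arithmetic prime $\wp$ in the branch $\mathbb{T}(\mathfrak{a})$, so applying it branch-by-branch immediately yields the two ``within-branch'' equivalences (1)$\Leftrightarrow$(3) and (2)$\Leftrightarrow$(4). The problem is thus reduced to showing that ``$L(\bar\rho,\mathfrak{a})$ has unit content'' is a property of $\mathcal{H}(\bar\rho)$ that is insensitive to the choice of branch.

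To prove (3)$\Leftrightarrow$(4) I would transport information between branches through the universal $L$-function $L_\Sigma(\bar\rho)\in \mathbb{T}_\Sigma\hat\otimes_{\Z_p}\Z_p\pwseries{\Gamma}$. Given branches $\mathfrak{a}_1, \mathfrak{a}_2$, choose $\Sigma$ large enough to contain every prime dividing $N(\mathfrak{a}_1)N(\mathfrak{a}_2)/N^-$. Since $\mathbb{T}_\Sigma$ is local and the structure maps $\mathbb{T}_\Sigma \twoheadrightarrow \mathbb{T}(\mathfrak{a}_i)^\circ$ are local embeddings on residue fields, reduction modulo the maximal ideal of $\mathbb{T}_\Sigma$ factors through reduction modulo each minimal prime $\mathfrak{a}_i$; hence ``$L_\Sigma(\bar\rho)$ has unit content'' is equivalent to ``$L_\Sigma(\bar\rho,\mathfrak{a}_i)$ has unit content'' for each $i$, and this common condition does not depend on the choice of branch. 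It therefore suffices to show, for a single branch $\mathfrak{a}$, that unit content of $L(\bar\rho,\mathfrak{a})$ is equivalent to unit content of $L_\Sigma(\bar\rho,\mathfrak{a})$. By Theorem \ref{thm:3.6.2}, $L_\Sigma(\bar\rho,\mathfrak{a})$ corresponds under the stated identification to $E_\Sigma(\mathfrak{a})\cdot L(\bar\rho,\mathfrak{a})$, and since $\mathbf{F}\pwseries{T}$ is an integral domain, the content of this product is unit if and only if both factors reduce to nonzero elements modulo $\mathfrak{m}_{\mathbb{T}(\mathfrak{a})^\circ}$. The remaining task is to verify that $E_\Sigma(\mathfrak{a})$ itself always has unit content.

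The main obstacle I expect is precisely this last verification. Unpacking (\ref{def:e-a}), each prime $\ell\in\Sigma$ splits in $K$ by Remark \ref{rem:split}, and each $E_\ell(\mathfrak{a})$ is a product of factors of the form $1 - \bar\alpha\,\bar\gamma_{\mathfrak{l}}$ in $\mathbf{F}\pwseries{T}$, where $\bar\alpha$ is a reduced Hecke eigenvalue twisted by $\bar\Theta^{-1}(\ell)\ell^{-1}$ and $\bar\gamma_{\mathfrak{l}}=(1+T)^{n_\mathfrak{l}}\in\mathbf{F}\pwseries{T}$ is the image of a Frobenius in $\Gamma\simeq\Z_p$. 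Such a factor can vanish only in the degenerate situation where $n_\mathfrak{l}=0$ (so $\mathfrak{l}$ splits completely in $K_\infty/K$) and simultaneously $\bar\alpha=1$; ruling this out requires a careful case analysis using the splitting hypothesis together with the residual structure of $\bar\rho$. Once this is established, the cycle closes: if $L(\bar\rho,\mathfrak{a}_1)$ has unit content, then so does $L_\Sigma(\bar\rho,\mathfrak{a}_1)$, hence $L_\Sigma(\bar\rho)$, hence $L_\Sigma(\bar\rho,\mathfrak{a}_2)$, and finally, cancelling the nonvanishing factor $E_\Sigma(\mathfrak{a}_2)$ in the domain $\mathbf{F}\pwseries{T}$, also $L(\bar\rho,\mathfrak{a}_2)$.
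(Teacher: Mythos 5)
Your proposal takes essentially the same route as the paper, whose proof simply transports the argument of \cite[Thm.~3.7.5]{EPW} with Theorem~\ref{thm:3.6.2} replacing [\emph{loc.cit.}, Cor.~3.6.3]: within a branch, unit content is detected by the residual $L$-function, and across branches one compares through $L_\Sigma(\bar\rho)$ and the Euler factors $E_\Sigma(\mathfrak a)$. The one step you leave open --- that $E_\Sigma(\mathfrak a)$ has unit content --- is indeed needed, but your description of how to close it is slightly off target: you cannot in general rule out $\bar\alpha=1$ (the reduced, twisted Hecke eigenvalue at $\ell$ may perfectly well be $1$), so no ``residual structure of $\bar\rho$'' will help; the argument must run entirely through showing $n_{\mathfrak l}\neq 0$, and this is exactly what the splitting hypothesis of Remark~\ref{rem:split} provides. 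Concretely: a prime $\mathfrak l$ of $K$ above a rational prime $\ell\neq p$ that splits in $K$ is finitely decomposed in the anticyclotomic $\Z_p$-extension (only primes inert in $K$ split completely in $K_\infty/K$). One way to see this: if the Frobenius of $\mathfrak l$ were trivial in $\Gamma$, its class would be torsion in $\varprojlim_n\Pic(\cO_{p^n})$, so some power $\mathfrak l^m$ would be generated, for every $n$, by an element of $\cO_{p^n}=\Z+p^n\cO_K$; since generators differ only by the finitely many units of $\cO_K$, a fixed generator would lie in $\Z+p^n\cO_K$ for infinitely many $n$, hence in $\Z$, forcing $\mathfrak l^m=\bar{\mathfrak l}^m$, a contradiction. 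Thus $\bar\gamma_{\mathfrak l}=(1+T)^{n_{\mathfrak l}}$ with $n_{\mathfrak l}\neq 0$, so $(1+T)^{n_{\mathfrak l}}\notin\mathbf F$ and is transcendental over $\mathbf F$; consequently every reduced Euler factor, linear or quadratic (both with constant term $1$ as polynomials in $(1+T)^{n_{\mathfrak l}}$), is nonzero in $\overline{\mathbf F}\pwseries{T}$ regardless of the values of the reduced eigenvalues. With this inserted, your cycle of implications closes and the argument coincides with the paper's.
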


\begin{proof}
The argument in \cite[Thm~3.7.5]{EPW} applies verbatim, replacing the appeal to [\emph{loc.cit.}, Cor.~3.6.3]
by our Theorem~\ref{thm:3.6.2} above.
\end{proof}

When any of the conditions in Theorem~\ref{3.7.5} hold,
we shall write
\[
\mu^\mathrm{an}(\bar\rho)=0.
\]
For a power series $f(T)$ with unit content
and coefficients in a local ring $R$, 
the $\lambda$-invariant $\lambda(f(T))$ is defined to be the smallest degree in which $f(T)$ has a unit coefficient.



\begin{theorem}\label{3.7.7}
Assume that $\mu^\mathrm{an}(\bar\rho)=0$.
\begin{enumerate}
\item Let $\mathbb T(\mathfrak a)$ be an irreducible component of $\mathcal{H}^-(\bar\rho)$.
As $\wp$ varies over the arithmetic primes of $\mathbb T(\mathfrak a)$, the $\lambda$-invariant
$\lambda(L(\bar\rho,\mathfrak a)(\wp))$ takes on a constant value, denoted $\lambda^{\rm an}(\bar\rho,\mathfrak{a})$.
\item For any two irreducible components $\mathbb T(\mathfrak a_1), \mathbb T(\mathfrak a_2)$ of $\mathcal{H}^-(\bar\rho)$,
we have that
\[
\lambda^\mathrm{an}(\bar\rho,\mathfrak a_1)-\lambda^\mathrm{an}(\bar\rho,\mathfrak a_2)=
\sum_{\ell\neq p}e_\ell(\mathfrak a_2)-e_\ell(\mathfrak a_1),
\]
where $e_\ell(\mathfrak a)=\lambda(E_\ell(\mathfrak{a}))$.
\end{enumerate}
\end{theorem}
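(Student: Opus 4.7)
The plan is to follow the argument of \cite[Thm.~3.7.7]{EPW} closely, using Theorem~\ref{thm:3.6.2} in place of its counterpart on modular curves. The crucial principle throughout is that the local rings $\mathbb T_\Sigma$, $\mathbb T(\mathfrak a)^\circ$ and $\cO_\wp^\circ$ all share the residue field $\mathbf F$, so unit-ness of an element is preserved by all the reduction maps that will appear.

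For part~(1), I would first invoke Theorem~\ref{3.7.5} to conclude that $L(\bar\rho,\mathfrak a)=\sum_n a_n T^n \in \mathbb T(\mathfrak a)^\circ\pwseries{T}$ has unit content, and let $N$ be the least index with $a_N$ a unit. For any arithmetic prime $\wp$ of $\mathbb T(\mathfrak a)$, the surjection $\mathbb T(\mathfrak a)^\circ \twoheadrightarrow \cO_\wp^\circ$ is a local homomorphism inducing the identity on residue fields, so $a_n$ is a unit in $\mathbb T(\mathfrak a)^\circ$ if and only if its image in $\cO_\wp^\circ$ is. This immediately forces $\lambda(L(\bar\rho,\mathfrak a)(\wp))=N$ for every $\wp$, yielding the invariant $\lambda^{\rm an}(\bar\rho,\mathfrak a)$.

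For part~(2), I would choose a finite set of primes $\Sigma$ (not containing $p$, compatible with Remark~\ref{rem:split}) large enough that both $\mathfrak a_1$ and $\mathfrak a_2$ appear as minimal primes of $\mathbb T_\Sigma$. Theorem~\ref{thm:3.6.2} then gives, for $j=1,2$,
\[
L_\Sigma(\bar\rho,\mathfrak a_j) = E_\Sigma(\mathfrak a_j)\cdot L(\bar\rho,\mathfrak a_j)
\]
in $\mathbb T(\mathfrak a_j)^\circ\pwseries{T}$, where each $L_\Sigma(\bar\rho,\mathfrak a_j)$ is the reduction modulo $\mathfrak a_j$ of a single element $L_\Sigma(\bar\rho)\in\mathbb T_\Sigma\pwseries{T}$. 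After verifying that each Euler factor $E_\ell(\mathfrak a)$ has unit content, so that $\lambda(E_\Sigma(\mathfrak a_j))=\sum_{\ell\in\Sigma}e_\ell(\mathfrak a_j)$ is a well-defined nonnegative integer, the residue-field argument from Part~(1) applied to $\mathbb T_\Sigma \twoheadrightarrow \mathbb T(\mathfrak a_j)^\circ$ gives
\[
\lambda(L_\Sigma(\bar\rho,\mathfrak a_1))=\lambda(L_\Sigma(\bar\rho))=\lambda(L_\Sigma(\bar\rho,\mathfrak a_2)).
\]
Additivity of $\lambda$-invariants for unit-content power series over a local ring then gives $\lambda(E_\Sigma(\mathfrak a_j)) + \lambda^{\rm an}(\bar\rho,\mathfrak a_j) = \lambda(L_\Sigma(\bar\rho))$ for $j=1,2$, and subtracting these two identities proves the claimed formula; the sum may be extended to all $\ell\neq p$ because $e_\ell(\mathfrak a_j)=0$ whenever $\ell\notin\Sigma$.

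The main technical obstacle I foresee is the verification that each Euler factor $E_\ell(\mathfrak a)$ has unit content. I expect this to reduce to a short case-by-case check on $e_\ell\in\{0,1,2\}$: since $\gamma_\mathfrak l-1$ lies in the augmentation ideal of $\Lambda$, either the constant term (the value of $E_\ell(\mathfrak a,\ell^{-1}\gamma_\mathfrak l)$ at $\gamma_\mathfrak l=1$) is already a unit modulo $\mathfrak m_{\mathbb T(\mathfrak a)^\circ}$, or else $a_\ell$ itself is a unit, in which case the linear coefficient in $(\gamma_\mathfrak l-1)$ is a unit. The same inspection must be repeated for the companion factor involving $\gamma_{\bar{\mathfrak l}}$. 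All remaining steps are formal manipulations with $\lambda$-invariants.
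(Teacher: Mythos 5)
Your proposal is correct and is essentially the paper's own argument: the paper deduces part~(1) immediately from the definition of the $\lambda$-invariant as the lowest degree of a unit coefficient (unit/non-unit status of coefficients being preserved under the local specialization maps), and for part~(2) it runs the argument of \cite[Thm.~3.7.7]{EPW} verbatim with Theorem~\ref{thm:3.6.2} replacing \cite[Cor.~3.6.3]{EPW}, which is precisely the comparison through the common element $L_\Sigma(\bar\rho)$ and the additivity of $\lambda$-invariants that you spell out. The only imprecision is in your unit-content check for $E_\ell(\mathfrak{a})$: when the constant term is a non-unit, the first unit coefficient need not be the linear one (writing $\gamma_{\mathfrak{l}}=\gamma_0^{c}$ for a topological generator $\gamma_0$ of $\Gamma$, it occurs in degree $p^{v_p(c)}$), but the unit-content conclusion still holds because $\ell$ splits in $K$, so $\gamma_{\mathfrak{l}}$ has infinite order in $\Gamma$ and the reduction of $E_\ell(\mathfrak{a})$ modulo the maximal ideal of $\mathbb{T}(\mathfrak{a})^\circ$ is a nonzero power series over $\mathbf{F}$.
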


\begin{proof}
The first part follows immediately from the definitions. 
For the second part, the argument in \cite[Thm.~3.7.7]{EPW} applies verbatim, replacing their
appeal to [\emph{loc.cit.}, Cor.~3.6.3] by our Theorem~\ref{thm:3.6.2} above.
\end{proof}

By Theorem~\ref{3.7.5} and Theorem~\ref{3.7.7}, the Iwasawa invariants of $L(\bar\rho,\mathfrak a)(\wp)$
are well-behaved as $\wp$ varies. However, for the applications of these results to the Iwasawa main conjecture
it is of course necessary to relate $L(\bar\rho,\mathfrak a)(\wp)$ to
$p$-adic $L$-functions defined by the interpolation of special values of $L$-functions.
This question was addressed in 
\cite{cas-longo}, as we now recall.

\begin{theorem}\label{thm:3.4.3}
If $\wp$ is the arithmetic prime of $\mathbb T(\mathfrak a)$
corresponding to a $p$-ordinary $p$-stabilized newform $f_\wp$ of weight $k\geqslant 2$
and trivial nebentypus, then
\[
L(\bar\rho,\mathfrak a)(\wp)=L_p(f_\wp/K),
\]
where $L_p(f_{\wp}/K)$ is the $p$-adic $L$-function
of Chida--Hsieh \cite{ChHs1}. In particular, if $\chi:\Gamma\rightarrow\bC_p^\times$
is the $p$-adic avatar of an anticyclotomic Hecke character of $K$ of infinity type $(m,-m)$ with $-k/2<m<k/2$,
then $L(\bar\rho,\mathfrak a)(\wp)$ interpolates the central critical values
\[
\frac{L(f_\wp/K,\chi,k/2)}{\Omega_{f_\wp,N^-}}
\]
as $\chi$ varies, where $\Omega_{f_\wp,N^-}$ is the complex Gross period $(\ref{def:period})$.
\end{theorem}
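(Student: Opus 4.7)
The plan is to unwind the definitions and then invoke the interpolation formula of \cite{cas-longo}, which provides the bridge between big Heegner points in Hida families and classical special values. Recall that $L(\bar\rho,\mathfrak a)$ was obtained from the two-variable measure $\mu$ of Definition~\ref{def:2var}, whose values are the Heegner-point classes $U_p^{-n}\cdot\mathcal Q_n^{}\in(\mathfrak J_{N(\mathfrak a)^+})_{\mathfrak m}$, pushed through the trivialization $\alpha_N$ of Theorem~\ref{thm:3.3.1} and then multiplied by its $*$-twin. The specialization at $\wp$ is therefore controlled by the weight-$k$ version of $\alpha_N$, which, by the multiplicity one argument of Section~\ref{subsec:periods}, can be identified up to a $p$-adic unit with evaluation against the $p$-adically normalized Jacquet--Langlands transfer $\phi_{f_\wp}\in S_k(\widetilde X_{N(\mathfrak a)^+,r})$ of $f_\wp$.

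First I would make precise the specialization of $\mu$ at $\wp$. Using the control isomorphism
\[
(\mathfrak J_{N(\mathfrak a)^+})_{\mathfrak m}\otimes\mathbb T_{N(\mathfrak a)}^{N^-}/\wp_{N(\mathfrak a),r,k}
\;\simeq\;(\mathfrak J_{N(\mathfrak a)^+,r,k})_{\mathfrak m_{r,k}}
\]
from the proof of Theorem~\ref{thm:3.1.1}, together with Theorem~\ref{thm:3.1.1} itself, the specialization of $U_p^{-n}\cdot\mathcal Q_n^{}$ at $\wp$ is identified with $\phi_{f_\wp}(U_p^{-n}\cdot\mathcal Q_{n,r}^{(k)})$ times a $p$-adic unit, where $\mathcal Q_{n,r}^{(k)}$ is the weight-$k$ Heegner cycle of conductor $p^{n+1+r}$ corestricted down to $K_n$, with the twist by the critical character $\Theta$ becoming the classical twist by $\varepsilon_{\rm wild}^{(k-2)/2}$ at weight $k$. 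Thus the specialization $L(\bar\rho,\mathfrak a)(\wp)$ is, up to units, the product of the measure $\chi\mapsto\phi_{f_\wp}(\sum_{\sigma}\chi(\sigma)\mathcal Q_{n,r}^{(k),\sigma})$ and its $\chi\mapsto\chi^{-1}$ conjugate.

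Next I would invoke the main result of \cite{cas-longo}, which is precisely the statement that this pairing of $\phi_{f_\wp}$ against the weight-$k$ specialization of the big Heegner point measure, when multiplied with its $*$-twin, equals the Chida--Hsieh anticyclotomic $p$-adic $L$-function $L_p(f_\wp/K)$. Here the period appearing on the Chida--Hsieh side is exactly the Gross period $\Omega_{f_\wp,N^-}$ of $(\ref{def:period})$, because $\Omega_{f_\wp,N^-}$ was defined using the self-pairing of the \emph{same} $p$-adically normalized $\phi_{f_\wp}$ we are evaluating against, so no auxiliary period ratio survives. This yields the first assertion; the interpolation formula of the second assertion is then just the interpolation property of $L_p(f_\wp/K)$ proved in \cite{ChHs1} for anticyclotomic characters of infinity type $(m,-m)$ with $-k/2<m<k/2$.

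The main obstacle is the bookkeeping of the normalizations. One must check that (i) the trivialization $\alpha_N$ of Theorem~\ref{thm:3.3.1} is compatible, under arithmetic specialization, with the weight-$k$ functional $\delta_{f_\wp}=\phi_{f_\wp}$ fixed in Section~\ref{subsec:periods}; (ii) the critical character twist $\Theta$ specializes at $\wp$ to the correct half-integral power of the cyclotomic character used to make $\rho_{f_\wp}$ self-dual; and (iii) the $*$-involution turning the square root $\mathcal L(\mathfrak m,N)$ into $L(\mathfrak m,N)$ matches the way Chida--Hsieh package $L_p(f_\wp/K)$ as a square of a theta element. All three checks are essentially carried out in \cite{cas-longo}; what remains here is to read off that the outcome descends compatibly to the branch $\mathbb T(\mathfrak a)$ via the surjection $(\ref{eq:3.7})$, which is immediate since every construction above is equivariant for the canonical projection $\mathbb T_\Sigma\twoheadrightarrow\mathbb T(\mathfrak a)\hookrightarrow\mathbb T(\mathfrak a)^\circ$.
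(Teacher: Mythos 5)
Your proposal is correct and follows essentially the same route as the paper: the paper's proof simply cites the main result of \cite{cas-longo} as a reformulation, with the one substantive remark being exactly your point (i) — that because the specialization is taken through the control isomorphism $(\ref{control-H0})$ and the integrally normalized identification $\delta_{f_\wp}=\phi_{f_\wp}$ from Section~\ref{subsec:periods}, the ambiguity constant $\lambda_\wp$ of \cite[Thm.~4.6]{cas-longo} disappears and the Gross period $\Omega_{f_\wp,N^-}$ is the period that appears. Your more detailed unwinding of the specialization of the measure and of the $\Theta$-twist and $*$-involution is consistent with, and fleshes out, the paper's brief argument.
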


\begin{proof}
This is a reformulation of the main result of \cite{cas-longo}.
(Note that the constant $\lambda_\wp\in F_\wp^\times$ in \cite{cas-longo}, Thm.~4.6] is not
needed here, since the specialization map of [\emph{loc.cit.}, \S{3.1}] is being replaced by the
map $(\mathfrak{J}_{N^+})_{\mathfrak{m}}\rightarrow(\mathfrak{J}_{N^+,r,k})_{\mathfrak{m}_{r,k}}$ induced by the
isomorphism $(\ref{control-H0})$, which preserves integrality.)
\end{proof}

\begin{corollary}\label{cor:lambda-an}
Let $f_1$, $f_2\in\mathcal{H}^-(\bar\rho)$ be newforms with trivial nebentypus lying in the branches
$\mathbb{T}(\mathfrak{a}_1)$, $\mathbb{T}(\mathfrak{a}_2)$, respectively.
Then $\mu^{\rm an}(\bar\rho)=0$ and
\[
\lambda(L_p(f_1/K))-\lambda(L_p(f_2/K))=
\sum_{\ell\neq p}e_\ell(\mathfrak{a}_2)-e_\ell(\mathfrak{a}_1),
\]
where $e_\ell(\mathfrak{a}_j)=\lambda(E_\ell(\mathfrak{a}_j))$.
\end{corollary}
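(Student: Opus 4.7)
The plan is to deduce this corollary by combining the three main results of this section in a straightforward way; the substantive work has all been done already, and what remains is essentially bookkeeping.

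First I would establish that $\mu^{\rm an}(\bar\rho)=0$. By Theorem~\ref{thm:3.4.3}, for the arithmetic prime $\wp$ of a branch $\mathbb{T}(\mathfrak{a})$ corresponding to any $p$-ordinary $p$-stabilized newform $f_\wp$ of trivial nebentypus in $\mathcal{H}(\bar\rho)$, one has the identification
\[
L(\bar\rho,\mathfrak{a})(\wp)=L_p(f_\wp/K),
\]
where the right-hand side is the Chida--Hsieh $p$-adic $L$-function normalized by the Gross period $\Omega_{f_\wp,N^-}$. As recalled in Section~\ref{subsec:periods} (citing \cite[\S{5.4}]{ChHs1}), this normalization is precisely the one for which the $\mu$-invariant is known to vanish under Assumption~\SU. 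Hence condition~(1) of Theorem~\ref{3.7.5} is satisfied by (say) any weight-two form in $\mathcal{H}(\bar\rho)$, and Theorem~\ref{3.7.5} then propagates the vanishing, giving $\mu^{\rm an}(\bar\rho)=0$.

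Second, with vanishing $\mu$-invariants in hand, I would invoke Theorem~\ref{3.7.7}. Let $\wp_1$, $\wp_2$ be the arithmetic primes in $\mathbb{T}(\mathfrak{a}_1)$, $\mathbb{T}(\mathfrak{a}_2)$ corresponding to $f_1$, $f_2$. Part~(1) of Theorem~\ref{3.7.7} asserts that $\lambda(L(\bar\rho,\mathfrak{a}_i)(\wp_i))$ depends only on the branch, and equals $\lambda^{\rm an}(\bar\rho,\mathfrak{a}_i)$. Combining this with Theorem~\ref{thm:3.4.3} yields
\[
\lambda(L_p(f_i/K))=\lambda(L(\bar\rho,\mathfrak{a}_i)(\wp_i))=\lambda^{\rm an}(\bar\rho,\mathfrak{a}_i).
\]
Subtracting and applying Theorem~\ref{3.7.7}(2) immediately gives
\[
\lambda(L_p(f_1/K))-\lambda(L_p(f_2/K))=\sum_{\ell\neq p}e_\ell(\mathfrak{a}_2)-e_\ell(\mathfrak{a}_1),
\]
which is the claim.

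There is essentially no obstacle: every nontrivial input has been assembled in the preceding subsections. If anything, the only point requiring mild care is confirming that the branch $\mathbb{T}(\mathfrak{a})$ actually contains an arithmetic prime of the type to which Chida--Hsieh's $\mu=0$ result applies (i.e., a trivial-nebentypus $p$-ordinary $p$-stabilized newform), so that the hypothesis of Theorem~\ref{3.7.5}(1) is genuinely triggered; this is guaranteed by our standing setup since $f_1,f_2\in\mathcal H(\bar\rho)$ themselves witness this. Everything else is formal.
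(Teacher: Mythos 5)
Your proposal is correct and follows the paper's own proof essentially verbatim: Chida--Hsieh's vanishing of the $\mu$-invariant for the Gross-period-normalized $L_p(f/K)$, fed through Theorem~\ref{3.7.5} and Theorem~\ref{thm:3.4.3} to get $\mu^{\rm an}(\bar\rho)=0$, and then Theorem~\ref{3.7.7} combined again with Theorem~\ref{thm:3.4.3} for the $\lambda$-identity. The only point to tighten is the choice of witness for Theorem~\ref{3.7.5}(1): the cited $\mu=0$ input (\cite[Thm.~5.7]{ChHs1}) requires trivial nebentypus \emph{and} weight $k\leq p+1$, so one should take a trivial-nebentypus specialization of the branch of weight $\leq p+1$ (which exists because $f_1$ has trivial nebentypus), rather than ``any weight-two form'' (whose nebentypus may be nontrivial) or $f_1,f_2$ themselves (whose weights may exceed $p+1$).
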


\begin{proof}
By \cite[Thm.~5.7]{ChHs1} (extending Vatsal's result \cite{Vat1} to higher weights),
if $f\in\mathcal{H}^-(\bar\rho)$ has weight $k\leqslant p+1$ and trivial nebentypus,
then $\mu(L_p(f/K))=0$. By Theorem~\ref{3.7.5} and Theorem~\ref{thm:3.4.3}, this implies $\mu^{\rm an}(\bar\rho)=0$.
The result thus follows from Theorem~\ref{3.7.7}, using again Theorem~\ref{thm:3.4.3}
to replace $\lambda^{\rm an}(\bar\rho,\mathfrak{a}_j)$ by $\lambda(L_p(f_j/K))$.
\end{proof}

\section{Anticyclotomic Selmer groups}\label{sec:Selmer}

We continue with the notation of the previous sections. In particular,
$\bar\rho:G_\Q\rightarrow{\rm GL}_2(\mathbf{F})$ is an odd irreducible
Galois representation satisfying Assumption~\SU~and isomorphic to $\bar\rho_{f_0}$
for some newform $f_0$ of weight $2$, $\mathcal{H}^-(\bar\rho)$ is
the associated Hida family, and $\Sigma$ is a finite set of primes split in the
imaginary quadratic field $K$. 
\sk

For each $f\in\mathcal{H}^-(\bar\rho)$, let $V_f$
denote the self-dual Tate twist of the $p$-adic Galois representation associated to $f$,
fix an $\cO$-stable lattice $T_f\subseteq V_f$, and set $A_f:=V_f/T_f$.
Since $f$ is $p$-ordinary, there is a unique one-dimensional $G_{\bQ_p}$-invariant subspace
$F_p^+V_f\subseteq V_f$ where the inertia group at $p$ acts via $\varepsilon_{\rm cyc}^{k/2}\psi$,
with $\psi$ a finite order character. Let $F_p^+A_f$ be the image of $F_p^+V_f$ in $A_f$,
and as recalled in the Introduction define the \emph{minimal Selmer group} of $f$ by
\begin{equation}\label{def:Sel-min}
{\rm Sel}(K_\infty,f):=\ker\left\{H^1(K_\infty,A_f)
\longrightarrow
\prod_{w\nmid p}H^1(K_{\infty,w},A_f)\times\prod_{w\vert p}H^1(K_{\infty,w},F_p^-A_f)\right\},\nonumber
\end{equation}
where $w$ runs over the places of $K_\infty$ and we set $F_p^-A_f:=A_f/F_p^+A_f$. 
\sk

It is well-known that ${\rm Sel}(K_\infty,f)$ is cofinitely generated over $\Lambda$. When
it is also $\Lambda$-cotorsion, we define the $\mu$-invariant $\mu({\rm Sel}(K_\infty,f))$
(resp. $\lambda$-invariant $\lambda({\rm Sel}(K_\infty,f))$) to the largest power of $\varpi$ dividing
(resp. the number of zeros of) the characteristic power series of the Pontryagin dual of ${\rm Sel}(K_\infty,f)$.
\sk

A distinguishing feature of the anticyclotomic setting (in comparison with cyclotomic Iwasawa theory)
is the presence of primes which split infinitely in the corresponding $\Z_p$-extension. Indeed,
being inert in $K$, all primes $\ell\mid N^-$ are infinitely split in $K_\infty/K$. As a result,
the above Selmer group differs in general from the \emph{Greenberg Selmer group} of $f$, defined as
\begin{equation}\label{def:Sel-Gr}
\mathfrak{Sel}(K_\infty,f):=\ker\left\{H^1(K_\infty,A_f)
\longrightarrow
\prod_{w\nmid p}H^1(I_{\infty,w},A_f)\times\prod_{w\vert p}H^1(K_{\infty,w},F_p^-A_f)\right\},\nonumber
\end{equation}
where $I_{\infty,w}\subseteq G_{K_\infty}$ denotes the inertia group at $w$.
\sk

If $S$ is a finite set of primes in $K$,
we let ${\rm Sel}^S(K_\infty,f)$ and $\mathfrak{Sel}^S(K_\infty,f)$ be the ``$S$-primitive'' Selmer groups
defined as above by omitting the local conditions at the primes in $S$ (except those above $p$,
when any such prime is in $S$). Moreover, if $S$ consists of the primes
dividing a rational integer $M$, we replace the superscript $S$ by $M$ in the above notation.
\sk

Immediately from the definitions, we see that there is as exact sequence
\begin{equation}\label{eq:defs}
0\longrightarrow{\rm Sel}(K_\infty,f)\longrightarrow\mathfrak{Sel}(K_\infty,f)
\longrightarrow\prod_{\ell\vert N^-}\mathcal{H}^{\rm un}_\ell,
\end{equation}
where
\[
\mathcal{H}_\ell^{\rm un}:={\ker}\left\{\prod_{w\vert\ell}H^1(K_{\infty,w},A_f)
\longrightarrow\prod_{w\vert\ell}H^1(I_{\infty,w},A_f)\right\}
\]
is the set of unramified cocycles. In \cite[\S\S{3}, 5]{pollack-weston},
Pollack and Weston carried out a careful analysis of the difference
between ${\rm Sel}(K_\infty,f)$ and $\mathfrak{Sel}(K_\infty,f)$.
Even though \emph{loc.cit.} is mostly concerned with cases in which $f$ is of weight $2$,
many of their arguments apply more generally.
In fact, the next result follows essentially from their work.


\begin{theorem}\label{thm:mu-alg}
Assume that $\bar\rho$ satisfies Hypotheses~\SU. Then the following are equivalent:
\begin{enumerate}
\item{} ${\rm Sel}(K_\infty,f_0)$ is $\Lambda$-cotorsion with $\mu$-invariant zero
for some newform $f_0\in\mathcal{H}^-(\bar\rho)$.
\item{} ${\rm Sel}(K_\infty,f)$ is $\Lambda$-cotorsion with $\mu$-invariant zero
for all newforms $f\in\mathcal{H}^-(\bar\rho)$.
\item{} $\mathfrak{Sel}(K_\infty,f)$ is $\Lambda$-cotorsion with $\mu$-invariant zero
for all newforms $f\in\mathcal{H}^-(\bar\rho)$.
\end{enumerate}
Moreover, in that case ${\rm Sel}(K_\infty,f)\simeq\mathfrak{Sel}(K_\infty,f)$.
\end{theorem}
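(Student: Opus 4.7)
The plan is to combine the residual Selmer group strategy of \cite{EPW} with the minimal-versus-Greenberg Selmer comparison carried out in \cite{pollack-weston}. I will organize the argument as follows: first establish that under Assumption~\SU\ the two Selmer groups coincide, which yields the ``moreover'' clause and reduces (2)$\Leftrightarrow$(3) to a tautology; then prove (1)$\Leftrightarrow$(2) by identifying the $\varpi$-torsion of ${\rm Sel}(K_\infty,f)$ with a residual Selmer group that depends only on $\bar\rho$ and the tame level.

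For the identification ${\rm Sel}(K_\infty,f) = \mathfrak{Sel}(K_\infty,f)$, the defect sequence $(\ref{eq:defs})$ reduces matters to proving $\mathcal{H}_\ell^{\rm un} = 0$ for every $\ell \mid N^-$. Since such $\ell$ is inert in $K$ but splits completely in $K_\infty/K$, at each place $w\vert\ell$ the completion $K_{\infty,w}$ equals $K_\ell$, and $\mathcal{H}_\ell^{\rm un}$ decomposes as a product of local unramified cohomology groups isomorphic to $H^1(K_{\infty,w}^{\rm un}/K_{\infty,w}, A_f^{I_w})$. Under \SU(3), $\bar\rho$ is ramified at every $\ell \mid N^-$; a standard case analysis (as in \cite[\S3]{pollack-weston}) according to whether or not $\ell \equiv \pm 1 \pmod p$, combined with the ramification of $\bar\rho$ and the fact that $A_f^{I_w}$ is determined by $\bar V^{I_w}$ up to a bounded error controlled by $\bar\rho$, shows that the relevant cohomology vanishes. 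Consequently the map in $(\ref{eq:defs})$ is zero, ${\rm Sel}(K_\infty,f) \simeq \mathfrak{Sel}(K_\infty,f)$ for every $f \in \mathcal{H}(\bar\rho)$, and (2)$\Leftrightarrow$(3) is immediate.

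To prove (1)$\Leftrightarrow$(2), I follow the EPW strategy via residual Selmer groups. One defines ${\rm Sel}(K_\infty,\bar\rho) \subseteq H^1(K_\infty, \bar V)$ using the analogous local conditions at places above $p$, via the reduction of $F_p^+ V_f$ (unambiguous by the $p$-distinguished hypothesis in \SU(2)), and unramified conditions elsewhere. A snake-lemma argument applied to the short exact sequence
\[
0 \longrightarrow A_f[\varpi] \longrightarrow A_f \xrightarrow{\;\varpi\;} A_f \longrightarrow 0,
\]
together with the surjectivity of $H^1(G_{K,S}, V_f) \twoheadrightarrow H^1(G_{K,S}, A_f)[\varpi]$ obtained from the irreducibility of $\bar\rho$ (guaranteed by \SU), identifies ${\rm Sel}(K_\infty, f)[\varpi]$ with ${\rm Sel}(K_\infty, \bar\rho)$ modulo correction terms at the tamely ramified primes that depend only on $\bar\rho$ and the tame level. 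Once one knows that ${\rm Sel}(K_\infty,f)$ is $\Lambda$-cotorsion, the vanishing $\mu({\rm Sel}(K_\infty,f)) = 0$ is equivalent to finiteness of ${\rm Sel}(K_\infty, f)[\varpi]$, and hence to finiteness of the residual Selmer group. Since the latter depends only on $\bar\rho$ and $\Sigma$, not on the particular lift $f$, the condition transfers from any one newform to all of $\mathcal{H}(\bar\rho)$, yielding (1)$\Leftrightarrow$(2).

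The main technical obstacle will be the uniform control of $\mathcal{H}_\ell^{\rm un}$ at the primes $\ell \mid N^-$, which split infinitely in $K_\infty/K$: the local cohomology is an infinite product indexed by the places of $K_\infty$ above $\ell$, so one must ensure the vanishing at each such place is robust, which is precisely where the full strength of the ramification hypothesis in \SU(3) is used. Once this local vanishing is secured, the EPW snake-lemma reduction to the residual Selmer group proceeds essentially as in the cyclotomic setting, and the remaining implications are formal.
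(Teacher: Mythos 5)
Your treatment of the primes dividing $N^-$ is essentially sound and close to what the paper does: since $\bar\rho$ is ramified at every $\ell\mid N^-$ and $\ell\,\|\,N$, the module $A_f^{I_w}$ is $\cO$-divisible of corank one with ${\rm Frob}_w$ acting by a scalar $\neq 1$ over $K_{\infty,w}=K_\ell$, so the unramified local cohomology vanishes at each of the infinitely many places above $\ell$; this is exactly the content of the results of \cite{pollack-weston} (their Lemma~3.4 and Cor.~5.2) invoked in the paper, and it disposes of $(\ref{eq:defs})$, the ``moreover'' clause, and (2)$\Leftrightarrow$(3). (Do state the mechanism — divisibility of $A_f^{I_w}$ plus ${\rm Frob}_w-1$ acting as a nonzero scalar — rather than a ``standard case analysis''; also note the correct cohomological input for the residual comparison is the isomorphism $H^1(K_\infty,A_f[\varpi])\simeq H^1(K_\infty,A_f)[\varpi]$ coming from $H^0(K_\infty,A_f)=0$, not the surjection $H^1(G_{K,S},V_f)\twoheadrightarrow H^1(G_{K,S},A_f)[\varpi]$ you wrote.)

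The genuine gap is in your (1)$\Leftrightarrow$(2). You assert that ${\rm Sel}(K_\infty,f)[\varpi]$ agrees with a residual Selmer group ``modulo correction terms at the tamely ramified primes that depend only on $\bar\rho$ and the tame level.'' That is false as stated: the local conditions at the primes $\ell\mid N(\Sigma)/N^-$ vary with $f$ across branches (different tame conductors $N(\mathfrak a)$, different Euler factors), and this variation is precisely what produces the change of $\lambda$-invariants in Theorem~\ref{thm:lambda-alg}; so the $\varpi$-torsion of the \emph{minimal} Selmer group is not independent of $f$, and your transfer from one newform to all of $\mathcal H(\bar\rho)$ is unjustified as written. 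What the paper does instead is pass to the $N^+$-imprimitive Selmer groups ${\rm Sel}^{N^+}$ and $\mathfrak{Sel}^{N^+}$ via the exact sequences of \cite[Prop.~5.1]{pollack-weston}: it is the imprimitive residual Selmer group that is genuinely independent of $f$ (\cite[Prop.~3.6]{pollack-weston}), and the discrepancy $\prod_{\ell\mid N^+}\mathcal H_\ell$ is shown to be $\Lambda$-cotorsion with $\mu=0$ using \cite[Prop.~2.4]{GV} — a step which crucially uses that every prime dividing $N(\Sigma)/N^-$ splits in $K$ (Remark~\ref{rem:split}); for non-split primes these local terms need not even be cotorsion. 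Your proposal never invokes this splitting hypothesis nor controls these error terms, so the key finiteness transfer is missing. A smaller point: your phrase ``once one knows that ${\rm Sel}(K_\infty,f)$ is $\Lambda$-cotorsion'' risks circularity for general $f$; the correct order is that finiteness of the $\varpi$-torsion (deduced from the imprimitive residual group plus the $\mu=0$ error terms) \emph{implies} both cotorsionness and $\mu=0$.
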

\begin{proof}
Assume $f_0$ is a newform in $\mathcal{H}^-(\bar\rho)$ for which
${\rm Sel}(K_\infty,f_0)$ is $\Lambda$-cotorsion with $\mu$-invariant zero, and set $N^+:=N(\Sigma)/N^-$.
By \cite[Prop.~5.1]{pollack-weston}, we then have the exact sequences
\begin{align}
0\longrightarrow{\rm Sel}(K_\infty,f_0)\longrightarrow &\;{\rm Sel}^{N^+}(K_\infty,f_0)
\longrightarrow\prod_{\ell\mid N^+}\mathcal{H}_\ell\longrightarrow 0;\label{5.1a}\\
0\longrightarrow\mathfrak{Sel}(K_\infty,f_0)\longrightarrow&\;\mathfrak{Sel}^{N^+}(K_\infty,f_0)
\longrightarrow\prod_{\ell\mid N^+}\mathcal{H}_\ell\longrightarrow 0,\label{5.1b}
\end{align}
where $\mathcal{H}_\ell$ is the product of $H^1(K_{\infty,w},A_{f_0})$
over the places $w\mid\ell$ in $K_\infty$.
Since every prime $\ell\mid N^+$ splits in $K$ (see Remark~\ref{rem:split}),
the $\Lambda$-cotorsionness and the vanishing of the $\mu$-invariant of $\mathcal{H}_\ell$
can be deduced from \cite[Prop.~2.4]{GV}. Since ${\rm Sel}(K_\infty,f_0)[\varpi]$
is finite by assumption, it thus follows from $(\ref{5.1a})$
that ${\rm Sel}^{N^+}(K_\infty,f_0)[\varpi]$ is finite. Combined with
$(\ref{eq:defs})$ and \cite[Cor.~5.2]{pollack-weston},
the same argument using (\ref{5.1b}) shows that then
$\mathfrak{Sel}^{N^+}(K_\infty,f_0)[\varpi]$ is also finite.

On the other hand, following the arguments in the proof \cite[Prop.~3.6]{pollack-weston}
we see that for any $f\in\mathcal{H}(\bar{\rho})$ we have the isomorphisms
\begin{align*}
{\rm Sel}^{N^+}(K_\infty,\bar{\rho})
&\simeq{\rm Sel}^{N^+}(K_\infty,f)[\varpi];\\
\mathfrak{Sel}^{N^+}(K_\infty,\bar{\rho})
&\simeq\mathfrak{Sel}^{N^+}(K_\infty,f)[\varpi].
\end{align*}
As a result, the argument in the previous paragraph implies that, for any newform $f\in\mathcal{H}^-(\bar\rho)$,
both ${\rm Sel}^{N^+}(K_\infty,f)[\varpi]$ and $\mathfrak{Sel}^{N^+}(K_\infty,f)[\varpi]$ are finite ,
from where (using (\ref{5.1a}) and (\ref{5.1b}) with $f$ in place of $f_0$)
the $\Lambda$-cotorsionness and the vanishing of both the $\mu$-invariant
of ${\rm Sel}(K_\infty,f)$ and of $\mathfrak{Sel}(K_\infty,f)$ follows.
In view of $(\ref{eq:defs})$ and \cite[Lemma~3.4]{pollack-weston},
the result follows.
\end{proof}

Let $w$ be a prime of $K_\infty$ above $\ell\neq p$ and denote by $G_{w}\subseteq G_{K_\infty}$ its decomposition group.
Let $\mathbb T(\mathfrak a)$ be the irreducible component of $\mathbb{T}_\Sigma$ passing through $f$, and define 
\[
\delta_w(\mathfrak a):=\dim_{\mathbf{F}}A_f^{G_{w}}/\varpi.
\]
(Note that this is well-defined by \cite[Lemma 4.3.1]{EPW}.)
Assume $\ell=\mathfrak{l}\bar{\mathfrak{l}}$ splits in $K$ and put
\begin{equation}\label{def:delta-ell}
\delta_\ell(\mathfrak a)
:=\sum_{w\mid \ell}\delta_w(\mathfrak a),
\end{equation}
where the sum is over the (finitely many) primes $w$ of $K_\infty$ above $\ell$.
\sk

In view of Theorem~\ref{thm:mu-alg}, we write $\mu^\mathrm{alg}(\bar\rho)=0$ whenever
any of the $\mu$-invariants appearing in that result vanish. In that case, for any newform $f$
in $\mathcal{H}^-(\bar\rho)$ we may consider the $\lambda$-invariants
$\lambda({\rm Sel}(K_\infty,f))=\lambda(\mathfrak{Sel}(K_\infty,f))$.

\begin{theorem}\label{thm:lambda-alg}
Let $\bar\rho$ and $\Sigma$ be as above, and assume that $\mu^{\rm alg}(\bar\rho)=0$. If $f_1$ and $f_2$ are
any two newforms in 
$\mathcal{H}^-(\bar\rho)$ lying in the branches
$\mathbb{T}(\mathfrak{a}_1)$ and $\mathbb{T}(\mathfrak{a}_2)$, respectively,
then
\[
\lambda({\rm Sel}(K_\infty,f_1))-\lambda({\rm Sel}(K_\infty,f_2))=
\sum_{\ell\neq p}\delta_\ell(\mathfrak{a}_1)-\delta_\ell(\mathfrak{a}_2).
\]
\end{theorem}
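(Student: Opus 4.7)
The plan is to adapt the argument of \cite[Thm.~4.1.1]{EPW} to our anticyclotomic/quaternionic setting, using the ingredients developed in the previous sections. First, by Theorem~\ref{thm:mu-alg}, the hypothesis $\mu^{\rm alg}(\bar\rho)=0$ gives canonical isomorphisms ${\rm Sel}(K_\infty,f_j)\simeq\mathfrak{Sel}(K_\infty,f_j)$ for $j=1,2$, with both groups $\Lambda$-cotorsion of vanishing $\mu$-invariant. It therefore suffices to compare $\lambda(\mathfrak{Sel}(K_\infty,f_1))$ and $\lambda(\mathfrak{Sel}(K_\infty,f_2))$.

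I would then set $N^+:=N(\Sigma)^+$ (so every prime $\ell\neq p$ at which $f_1$ or $f_2$ is ramified divides $N^+$, and all such primes split in $K$ by Remark~\ref{rem:split}) and work with the $N^+$-imprimitive Greenberg Selmer group $\mathfrak{Sel}^{N^+}(K_\infty,f)$. The short exact sequence
\[
0\longrightarrow\mathfrak{Sel}(K_\infty,f)\longrightarrow\mathfrak{Sel}^{N^+}(K_\infty,f)\longrightarrow\prod_{\ell\vert N^+}\mathcal{H}_\ell\longrightarrow 0
\]
from \cite[Prop.~5.1]{pollack-weston}, combined with the vanishing of $\mu$, yields the additivity of $\lambda$-invariants
\[
\lambda(\mathfrak{Sel}^{N^+}(K_\infty,f))=\lambda(\mathfrak{Sel}(K_\infty,f))+\sum_{\ell\vert N^+}\lambda(\mathcal{H}_\ell).
\]

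The proof then rests on two ingredients. Ingredient (i) is the independence of $\lambda(\mathfrak{Sel}^{N^+}(K_\infty,f))$ from the choice of $f\in\mathcal H(\bar\rho)$: extending the residual-Selmer argument already used in the proof of Theorem~\ref{thm:mu-alg} (itself modeled on \cite[Prop.~3.6]{pollack-weston}), I would produce a canonical isomorphism $\mathfrak{Sel}^{N^+}(K_\infty,f)[\varpi]\simeq\mathfrak{Sel}^{N^+}(K_\infty,\bar\rho)$; combined with $\mu^{\rm alg}(\bar\rho)=0$, this forces $\lambda(\mathfrak{Sel}^{N^+}(K_\infty,f))=\dim_{\mathbf{F}}\mathfrak{Sel}^{N^+}(K_\infty,\bar\rho)$, a quantity depending only on $\bar\rho$ and $\Sigma$. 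Ingredient (ii) is the local identity $\lambda(\mathcal{H}_\ell)=\delta_\ell(\mathfrak a)$ for $f$ on the branch $\mathbb T(\mathfrak a)$ and each split prime $\ell\vert N^+$, which follows from \cite[Prop.~2.4]{GV} together with the definition $(\ref{def:delta-ell})$.

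Applying the additivity formula to $f_1$ and $f_2$ and subtracting, the $f$-independent constant term cancels and the claimed identity follows, once one observes that $\delta_\ell(\mathfrak a_1)=\delta_\ell(\mathfrak a_2)$ for every $\ell\neq p$ with $\ell\nmid N^+$: for such $\ell$, either $\ell$ is inert in $K$ (in which case the natural convention $\delta_\ell=0$ applies to both), or both $A_{f_1}$ and $A_{f_2}$ are unramified at $\ell$, so the dimension mod $\varpi$ of Frobenius-fixed vectors is determined by $\bar\rho\vert_{G_w}$ alone. The main technical obstacle will be ingredient (i), which demands a uniform analysis of local cohomology at the split primes of $N^+$, at the inert primes of $N^-$ (where the passage from ${\rm Sel}$ to $\mathfrak{Sel}$ is already accounted for by the first step), and at $p$ via the ordinary filtration, thereby upgrading the weight-two elliptic-curve arguments of \cite{pollack-weston} to arbitrary $p$-ordinary weight; no essentially new phenomena are expected, but the bookkeeping must be carried out carefully.
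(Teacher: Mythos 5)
Your overall strategy is the same as the paper's (which simply invokes the arguments of \cite[\S{4}]{EPW} and \cite[Thm.~7.1]{pollack-weston}): pass to $\mathfrak{Sel}$ via Theorem~\ref{thm:mu-alg}, compare with the $N^+$-imprimitive Selmer group through \cite[Prop.~5.1]{pollack-weston}, and use the residual Selmer group to see that $\lambda(\mathfrak{Sel}^{N^+}(K_\infty,f))$ depends only on $\bar\rho$. However, your Ingredient (ii) is incorrect, and it is exactly the step that decides the signs in the formula. For $w\mid\ell\neq p$ the $\lambda$-invariant of $H^1(K_{\infty,w},A_f)$ is \emph{not} $\delta_w(\mathfrak a)$: by \cite[Prop.~2.4]{GV} the characteristic ideal of $H^1(K_{\infty,w},A_f)^\vee$ is generated by the local Euler factor of $f$ at $w$ (an ``$e_\ell$-type'' quantity; this is precisely how it is used in Lemma~\ref{5.1.5}), whereas $\delta_w(\mathfrak a)=\dim_{\mathbf F}A_f^{G_w}/\varpi$ enters through the $\varpi$-multiplication sequence
\[
0\longrightarrow A_f^{G_w}/\varpi\longrightarrow H^1(K_{\infty,w},A_f[\varpi])\longrightarrow H^1(K_{\infty,w},A_f)[\varpi]\longrightarrow 0,
\]
which, since $H^1(K_{\infty,w},A_f)$ is divisible and $\Lambda$-cotorsion with $\mu=0$, gives
\[
\lambda\bigl(H^1(K_{\infty,w},A_f)\bigr)=\dim_{\mathbf F}H^1(K_{\infty,w},A_f[\varpi])-\delta_w(\mathfrak a),
\]
with the first term depending only on $\bar\rho\vert_{G_w}$. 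So the local $\lambda$-invariant and $\delta_w(\mathfrak a)$ are \emph{complementary} (their sum is constant along the Hida family), not equal; this complementarity is the content of Lemma~\ref{5.1.5}, and if your identity $\lambda(\mathcal H_\ell)=\delta_\ell(\mathfrak a)$ held, that lemma would force $e_\ell(\mathfrak a_1)=e_\ell(\mathfrak a_2)$ for all branches, which is false.

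Concretely, feeding your identity into your own additivity formula gives
\[
\lambda({\rm Sel}(K_\infty,f_1))-\lambda({\rm Sel}(K_\infty,f_2))=\sum_{\ell}\delta_\ell(\mathfrak a_2)-\delta_\ell(\mathfrak a_1),
\]
the negative of the statement you are proving; with the corrected local computation the constant terms $\dim_{\mathbf F}H^1(K_{\infty,w},A_f[\varpi])$ cancel upon subtracting the two additivity formulas and you obtain $\sum_\ell\delta_\ell(\mathfrak a_1)-\delta_\ell(\mathfrak a_2)$, as required. A smaller point: to identify $\lambda(\mathfrak{Sel}^{N^+}(K_\infty,f))$ with $\dim_{\mathbf F}\mathfrak{Sel}^{N^+}(K_\infty,\bar\rho)$ you also need the imprimitive Selmer group to be $\varpi$-divisible (surjectivity of the global-to-local maps, absence of finite-index submodules), as in \cite[\S{4}]{EPW} and \cite{pollack-weston}; this is part of the ``bookkeeping'' you defer, but it should be stated, since without it the residual isomorphism only bounds $\lambda$ rather than computing it.
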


\begin{proof}
Since we have the divisibilities $N^-\mid N(\mathfrak{a}_i)\mid N(\Sigma)$ with the quotient $N(\Sigma)/N^-$ only divisible
by primes that are split in $K$, the arguments of \cite[\S{4}]{EPW} apply verbatim (\emph{cf.} \cite[Thm.~7.1]{pollack-weston}).
\end{proof}



\section{Applications to the main conjecture}\label{sec:applications}

\subsection{Variation of anticyclotomic Iwasawa invariants}

Recall the definition of the analytic invariant
$e_\ell(\mathfrak{a})=\lambda(E_\ell(\mathfrak{a}))$,
where $E_\ell(\mathfrak{a})$ 
is the Euler factor from Section~\ref{subsec:comparison}, and of the algebraic invariant
$\delta_\ell(\mathfrak{\mathfrak{a}})$ introduced in $(\ref{def:delta-ell})$.


\begin{lemma}\label{5.1.5}
Let $\mathfrak a_1$, $\mathfrak a_2$ be minimal primes of $\mathbb T_\Sigma$.
For any prime $\ell\neq p$ split in $K$, we have
\[
\delta_\ell(\mathfrak a_1)-\delta_\ell(\mathfrak a_2)=e_\ell(\mathfrak a_2)-e_\ell(\mathfrak a_1).
\]
\end{lemma}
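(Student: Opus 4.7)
The plan is a direct local computation at $\ell$, modeled on the analysis in \cite[\S 4]{EPW} but carried out in the anticyclotomic setting. Both $e_\ell(\mathfrak{a})$ and $\delta_\ell(\mathfrak{a})$ can be computed explicitly from $\bar\rho\vert_{G_\ell}$ together with the local conductor exponent $v_\ell(N(\mathfrak{a}))$; since $\bar\rho$ is shared by the two branches, only this change of conductor can enter the difference. Concretely, I will verify that the sum $e_\ell(\mathfrak{a}) + \delta_\ell(\mathfrak{a})$ depends only on $\bar\rho\vert_{G_\ell}$ and on $v_\ell(N(\Sigma))$, which, once established, yields the lemma upon subtracting this identity for $\mathfrak{a}_1$ and $\mathfrak{a}_2$.

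For the analytic side, complex conjugation inverts $\Gamma$, so $\gamma_{\bar{\mathfrak{l}}} = \gamma_\mathfrak{l}^{-1}$; under an isomorphism $\Z_p\pwseries{\Gamma}\simeq \Z_p\pwseries{T}$ this makes $E_\ell(\mathfrak{a}, \ell^{-1}\gamma_\mathfrak{l})\cdot E_\ell(\mathfrak{a}, \ell^{-1}\gamma_\mathfrak{l}^{-1})$ into an explicit element of $\mathbf{F}\pwseries{T}$ after reduction modulo $\varpi$. Its $\lambda$-invariant is the order of vanishing at $T=0$, which I would read off case by case in $e_\ell\in\{0,1,2\}$, using in each case the residual value of $T_\ell$ modulo $\mathfrak{a}'$ pinned down by $\bar\rho\vert_{G_\ell}$ (for instance, in the Steinberg case $e_\ell=1$ it is governed by the Frobenius eigenvalue on the unramified quotient of $\bar\rho\vert_{G_\ell}$, up to the fixed twist by $\Theta^{-1}$). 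For the algebraic side, since $\ell=\mathfrak{l}\bar{\mathfrak{l}}$ splits in $K$, the primes of $K_\infty$ above $\ell$ are parametrised by $\Gamma/\Gamma_\mathfrak{l}\sqcup \Gamma/\Gamma_{\bar{\mathfrak{l}}}$ (with $\Gamma_{?}$ the decomposition group), and $\delta_\ell(\mathfrak{a}) = \sum_{w\mid\ell}\dim_\mathbf{F} A_f^{G_w}/\varpi$ unwinds to a dimension count involving the inertia-invariants of the local Weil--Deligne representation of $f$ together with the Frobenius action on them. A case analysis on $e_\ell$, paralleling the analytic one, then yields the matching formula.

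The main obstacle is the careful bookkeeping of the interaction between three different $\Gamma$-flavoured inputs: the local Frobenii $\gamma_\mathfrak{l},\gamma_{\bar{\mathfrak{l}}}\in\Gamma$, the Iwasawa variable $T$, and the critical character $\Theta$. The relations $\Theta(\sigma_\mathfrak{l})=\Theta(\sigma_{\bar{\mathfrak{l}}})=\theta(\ell)$ and $\theta(\ell)^2=\langle\ell\rangle_{N(\mathfrak{a})p}$, already exploited in Section~\ref{subsec:comparison}, are precisely what allow the Euler-factor and inertia-invariant contributions to line up. A subsidiary case split on whether $\bar\rho$ is ramified at $\ell$, and on whether $\ell\equiv\pm 1\pmod{p}$ (which controls the existence of Frobenius-fixed lines modulo $\varpi$), then completes the verification.
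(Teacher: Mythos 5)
Your reduction of the lemma to the statement that $e_\ell(\mathfrak a)+\delta_\ell(\mathfrak a)$ is independent of the branch is exactly the right skeleton, and it is also how the paper frames the argument; but the way you propose to establish this constancy has a genuine gap. The organizing premise is not true: neither $e_\ell(\mathfrak a)$ nor $\delta_\ell(\mathfrak a)$ is determined by $\bar\rho\vert_{G_\ell}$ together with the conductor exponent $v_\ell(N(\mathfrak a))$. In the level-raising situations (e.g.\ $\bar\rho$ unramified at $\ell$ with Frobenius eigenvalue ratio $\equiv\ell^{\pm1}\pmod\varpi$, so in particular $\ell\equiv\pm1\pmod p$), two branches with the \emph{same} exponent at $\ell$ can have $T_\ell\bmod\mathfrak a'$ reducing to either of the two admissible residual values, hence different $e_\ell$ and different $\delta_\ell$; only the sum is branch-independent, so the cancellation must actually be verified branch by branch, and that verification is precisely what your sketch leaves undone. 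Moreover $\delta_w(\mathfrak a)=\dim_{\mathbf F}A_f^{G_w}/\varpi$ is not a Weil--Deligne-level dimension count: it is an integral invariant, sensitive to congruences of Frobenius eigenvalues modulo $\varpi$, to the behavior of the extension class (Steinberg case) after restriction to $K_{\infty,w}$, and to the index $[\Gamma:\overline{\langle\gamma_{\mathfrak l}\rangle}]=p^m$ counting the primes $w\mid\mathfrak l$ (which on the analytic side appears as the order of vanishing of $1-(1+T)^{p^m u}$ at $T=0$). A by-hand case analysis would have to track all of this and confirm the matching in every case; nothing in your proposal identifies the mechanism that forces it.

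The paper supplies that mechanism and thereby avoids any case analysis. For each $w\mid\ell$ one has the exact sequence
\[
0\longrightarrow A_f^{G_w}/\varpi\longrightarrow H^1(K_{\infty,w},A_f[\varpi])\longrightarrow H^1(K_{\infty,w},A_f)[\varpi]\longrightarrow 0,
\]
whose middle term depends only on $\bar\rho$ (by irreducibility, $A_f[\varpi]$ is the same for every branch). The right-hand term has $\mathbf F$-dimension equal to the $\lambda$-invariant of the characteristic ideal of $\bigl(\oplus_{w\mid\ell}H^1(K_{\infty,w},A_f)\bigr)^\vee$ (divisibility of the local $H^1$), and by \cite[Prop.~2.4]{GV} that characteristic ideal is generated by the specialization of $E_\ell(\mathfrak a)$ at $\wp_f$, whose $\lambda$-invariant is $e_\ell(\mathfrak a)$. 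Summing over $w\mid\ell$ gives $\delta_\ell(\mathfrak a)+e_\ell(\mathfrak a)=\sum_{w\mid\ell}\dim_{\mathbf F}H^1(K_{\infty,w},A_f[\varpi])$, visibly independent of $\mathfrak a$; this is the argument of \cite[Lemma~5.1.5]{EPW}, to which the paper reduces. I recommend you replace the proposed explicit computation by this cohomological argument, or else accept that the direct route requires carrying out the full case-by-case verification (including the ambiguous $\ell\equiv\pm1$ cases) rather than asserting it.
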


\begin{proof}
Let $\mathfrak{a}$ be a minimal prime of $\mathbb T_\Sigma$, let
$f$ be a newform in the branch $\mathbb{T}(\mathfrak{a})$, and let
$\wp_f\subseteq\mathfrak{a}$ be the corresponding height one prime.
Since $\ell=\mathfrak{l}\bar{\mathfrak{l}}$ splits in $K$, we have
\[
\bigoplus_{w\vert\ell}H^1(K_{\infty,w},A_f)=
\Biggl(\bigoplus_{w\vert\mathfrak{l}}H^1(K_{\infty,w},A_f)\Biggr)
\oplus\Biggl(\bigoplus_{w\vert\bar{\mathfrak{l}}}H^1(K_{\infty,w},A_f)\Biggr)
\]
and \cite[Prop.~2.4]{GV} immediately implies that
\[
Ch_{\Lambda}\Biggl(\bigoplus_{w\vert\ell}H^1(K_{\infty,w},A_f)^\vee\Biggr)
=E_\ell(f,\ell^{-1}\gamma_{\mathfrak{l}})\cdot E_\ell(f,\ell^{-1}\gamma_{\bar{\mathfrak{l}}}),
\]
where $E_\ell(f,\ell^{-1}\gamma_{\mathfrak{l}})\cdot E_\ell(f,\ell^{-1}\gamma_{\bar{\mathfrak{l}}})$
is the specialization of $E_\ell(\mathfrak{a})$ at $\wp_f$. 
The result thus follows from \cite[Lemma 5.1.5]{EPW}.
\end{proof}

\begin{theorem}\label{thm:variation}
Suppose that $\bar\rho$ satisfies Assumption~\SU. If for some newform $f_0\in\mathcal{H}^-(\bar\rho)$ we have
the equalities
\[
\mu({\rm Sel}(K_\infty,f_0))=\mu(L_p(f_0/K))=0\quad\textrm{and}\quad
\lambda({\rm Sel}(K_\infty,f_0))=\lambda(L_p(f_0/K)),
\]
then the equalities
\[
\mu({\rm Sel}(K_\infty,f))=\mu(L_p(f/K))=0\quad\textrm{and}\quad
\lambda({\rm Sel}(K_\infty,f))=\lambda(L_p(f/K))
\]
hold for all newforms $f\in\mathcal{H}^-(\bar\rho)$.
\end{theorem}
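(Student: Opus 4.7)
The plan is to deduce Theorem~\ref{thm:variation} as a formal consequence of the $\mu$-invariant transfer results (Theorems~\ref{thm:mu-alg} and~\ref{3.7.5}, together with Theorem~\ref{thm:3.4.3}), the $\lambda$-invariant variation formulae (Theorem~\ref{thm:lambda-alg} and Corollary~\ref{cor:lambda-an}), and the matching identity between analytic and algebraic local invariants (Lemma~\ref{5.1.5}). Essentially no new argument is required: the machinery of Sections~\ref{sec:p-adicL} and \ref{sec:Selmer} was designed precisely so that both sides of the main conjecture obey parallel variation laws, and Lemma~\ref{5.1.5} aligns them term by term.

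For the $\mu$-invariants, I would argue as follows. The hypothesis $\mu(\mathrm{Sel}(K_\infty,f_0))=0$ is exactly condition (1) of Theorem~\ref{thm:mu-alg}, so we obtain $\mu^{\mathrm{alg}}(\bar\rho)=0$ and consequently $\mu(\mathrm{Sel}(K_\infty,f))=0$ for every newform $f\in\mathcal{H}(\bar\rho)$. On the analytic side, let $\wp_0$ be the arithmetic prime of $\mathbb{T}(\mathfrak{a}_0)$ corresponding to $f_0$. By Theorem~\ref{thm:3.4.3}, $L(\bar\rho,\mathfrak{a}_0)(\wp_0)=L_p(f_0/K)$, so $\mu(L_p(f_0/K))=0$ gives condition (1) of Theorem~\ref{3.7.5}, and hence $\mu^{\mathrm{an}}(\bar\rho)=0$. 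Applying Theorem~\ref{thm:3.4.3} in the opposite direction to any other newform $f$ of trivial nebentypus then yields $\mu(L_p(f/K))=0$.

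For the $\lambda$-invariants, let $f$ be an arbitrary newform in $\mathcal{H}(\bar\rho)$ lying on a branch $\mathbb{T}(\mathfrak{a})$, and write $\mathbb{T}(\mathfrak{a}_0)$ for the branch of $f_0$. By Theorem~\ref{thm:lambda-alg},
\[
\lambda(\mathrm{Sel}(K_\infty,f))-\lambda(\mathrm{Sel}(K_\infty,f_0))
=\sum_{\ell\neq p}\bigl(\delta_\ell(\mathfrak{a})-\delta_\ell(\mathfrak{a}_0)\bigr),
\]
while Corollary~\ref{cor:lambda-an} gives
\[
\lambda(L_p(f/K))-\lambda(L_p(f_0/K))
=\sum_{\ell\neq p}\bigl(e_\ell(\mathfrak{a}_0)-e_\ell(\mathfrak{a})\bigr).
\]
Since every prime $\ell\in\Sigma$ splits in $K$ by Remark~\ref{rem:split}, and the remaining primes contribute trivially to both sides, Lemma~\ref{5.1.5} applies to each term to give $\delta_\ell(\mathfrak{a})-\delta_\ell(\mathfrak{a}_0)=e_\ell(\mathfrak{a}_0)-e_\ell(\mathfrak{a})$. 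Hence the two displayed differences agree, and combining with the assumed equality $\lambda(\mathrm{Sel}(K_\infty,f_0))=\lambda(L_p(f_0/K))$ yields $\lambda(\mathrm{Sel}(K_\infty,f))=\lambda(L_p(f/K))$.

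Because all the structural inputs are already in place, there is no substantive obstacle left; the only point one must take care of is to ensure that the branches of $f$ and $f_0$ sit inside a common $\mathbb{T}_\Sigma$ for a finite set $\Sigma$ large enough to accommodate the tame level of both forms (which is automatic by the definition of $\mathcal{H}(\bar\rho)$ and the construction of Section~\ref{subsec:residual}), and that the terms with $\ell\nmid N(\Sigma)$ contribute $0$ to both sums, so the sums can be truncated identically. Once these bookkeeping points are verified, the theorem follows.
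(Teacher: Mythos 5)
Your proposal is correct and follows essentially the same route as the paper's own proof: the $\mu$-part comes from Theorem~\ref{thm:mu-alg} and Theorem~\ref{3.7.5} (with Theorem~\ref{thm:3.4.3} making the identification with $L_p(f/K)$ explicit), and the $\lambda$-part from Theorem~\ref{thm:lambda-alg} combined with Theorem~\ref{3.7.7}/Corollary~\ref{cor:lambda-an} and the term-by-term matching of Lemma~\ref{5.1.5}. Your routing through Corollary~\ref{cor:lambda-an} rather than citing Theorem~\ref{3.7.7} directly is only a cosmetic difference, since that corollary is exactly Theorem~\ref{3.7.7} plus Theorem~\ref{thm:3.4.3}.
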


\begin{proof}
Let $f$ be any newform in $\mathcal{H}^-(\bar\rho)$. Since the algebraic and analytic
$\mu$-invariants of $f_0$ both vanish, the vanishing of $\mu({\rm Sel}(K_\infty,f))$ and $\mu(L_p(f/K))$
follows from Theorems~\ref{thm:mu-alg} and ~\ref{3.7.5}, respectively.
On the other hand, combining Theorems~\ref{3.7.7} and \ref{thm:lambda-alg}, and
Lemma~\ref{5.1.5}, we see that
\[
\lambda({\rm Sel}(K_\infty,f))-\lambda({\rm Sel}(K_\infty,f_0))=
\lambda(L_p(f/K))-\lambda(L_p(f_0/K)),
\]
and hence the equality $\lambda({\rm Sel}(K_\infty,f_0))=\lambda(L_p(f_0/K))$
implies the same equality for $f$.
\end{proof}

\subsection{Applications to the main conjecture}

As an immediate consequence of Weierstrass preparation theorem,
Theorem~\ref{thm:variation} together with one the divisibilities predicted by the
anticyclotomic main conjecture implies the full anticyclotomic main conjecture.


\begin{theorem}[Skinner--Urban]\label{thm:SU}
Let $f\in S_k(\Gamma_0(N))$ be a newform of weight $k\equiv 2\pmod{p-1}$ and trivial nebentypus.
Suppose that $\bar{\rho}_f$ satisfies Assumption~\SU~ and that $p$ splits in $K$. Then
\[
(L_p(f/K))\supseteq Ch_{\Lambda}({\rm Sel}(K_\infty,f)^\vee).
\]
\end{theorem}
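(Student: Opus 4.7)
The plan is to deduce this from the main result of Skinner--Urban \cite{SU}, which establishes the divisibility in a three-variable Iwasawa main conjecture for $\mathrm{GL}_2$, and then specialize carefully to the anticyclotomic line. The hypothesis that $p$ splits in $K$ is precisely what allows one to construct the three-variable $p$-adic $L$-function via Eisenstein series on $\mathrm{GU}(2,2)$, and the assumption \SU~ on $\bar\rho_f$ matches the residual hypotheses needed to run the Eisenstein congruence argument of \emph{loc.cit.} The weight congruence $k\equiv 2\pmod{p-1}$ together with trivial nebentypus ensures that $f$ sits in a Hida family in a manner compatible with the critical character $\Theta$ used throughout this paper, so that the specialization of Skinner--Urban's three-variable objects really does land in the anticyclotomic world considered here.

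Concretely, first I would let $\mathcal{F}$ denote the Hida family containing $f$ (with respect to the chosen $p$-stabilization) and consider the three-variable Galois representation associated to $\mathcal{F}$ restricted to $G_K$. Since $p=\mathfrak p\bar{\mathfrak p}$ splits, the relevant deformation space has three variables: the Hida weight variable and two cyclotomic variables over $K$ (equivalently, the cyclotomic and anticyclotomic variables over $K$). One has an associated three-variable $p$-adic $L$-function $\mathcal L$ and a three-variable Selmer group $\mathcal S$, and Skinner--Urban prove $(\mathcal L)\supseteq Ch(\mathcal S^\vee)$ in the corresponding three-variable Iwasawa algebra.

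Next I would specialize. Setting the Hida variable to the arithmetic prime corresponding to $f$ and setting the cyclotomic variable to the trivial character gives, on the analytic side, the anticyclotomic $p$-adic $L$-function $L_p(f/K)\in\Lambda$ (up to matching the period normalisations, which is the content of the interpolation formula \eqref{eq:interp} and Theorem~\ref{thm:3.4.3}). On the algebraic side, the same specialization of $\mathcal S$ must be identified with the minimal Selmer group $\mathrm{Sel}(K_\infty,f)$; this requires checking that the local conditions at $p$ used by Skinner--Urban (the Greenberg local conditions corresponding to $F_p^+ A_f$) agree with those in the definition of $\mathrm{Sel}(K_\infty,f)$, and that the local conditions at primes $w\nmid p$ in $K_\infty$ also coincide after specialization. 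Since we are in the residually irreducible case, characteristic ideals behave well under specialization, so the divisibility $(\mathcal L)\supseteq Ch(\mathcal S^\vee)$ descends to $(L_p(f/K))\supseteq Ch_\Lambda(\mathrm{Sel}(K_\infty,f)^\vee)$.

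The main obstacle, as always with such deductions, is the control theorem and period comparison: one must verify that Skinner--Urban's three-variable $p$-adic $L$-function specializes on the nose to $L_p(f/K)$ as defined via Chida--Hsieh \cite{ChHs1}, and that no spurious factors of height one primes of $\mathbb T(\mathfrak a)$ enter the specialization of the characteristic ideal $Ch(\mathcal S^\vee)$. In practice this is where hypotheses \SU~ are essential, since they both force the Eisenstein component where the argument takes place to be a local direct summand and guarantee multiplicity one for the relevant Hecke modules. Once these ingredients are in place, the statement reduces to a direct citation of the Skinner--Urban divisibility followed by a specialization argument entirely parallel to \cite[\S3.6]{SU} and to the specialization arguments already used in the present paper.
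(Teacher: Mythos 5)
Your overall strategy is the same as the paper's: quote the Skinner--Urban three-variable divisibility and descend it to the anticyclotomic line of $f$. But two of the steps you wave through are precisely where the content lies, and as written they do not go through. First, your claim that ``the local conditions at primes $w\nmid p$ in $K_\infty$ also coincide after specialization'' fails at the primes dividing $N^-$: the Selmer group controlled by \cite{SU} is a Greenberg-type (and moreover $\Sigma'$-imprimitive) Selmer group with unramified conditions away from $p$, whereas ${\rm Sel}(K_\infty,f)$ is the \emph{minimal} Selmer group, and since every $\ell\mid N^-$ is inert in $K$ and splits completely in $K_\infty/K$, the unramified and strict conditions genuinely differ there. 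Because ${\rm Sel}\subseteq\mathfrak{Sel}$, an upper bound on $\mathfrak{Sel}$ does not formally give one on ${\rm Sel}$ in the direction you need; the paper has to invoke the Pollack--Weston analysis (Theorem~\ref{thm:mu-alg}) to identify the two, and separately to strip off the extra Euler factors coming from the $\Sigma'$-imprimitivity of the Skinner--Urban objects, which you never mention.

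Second, ``since we are in the residually irreducible case, characteristic ideals behave well under specialization'' is not a justification: specializing a divisibility of characteristic ideals from three variables down to one requires control statements for the Selmer group and commutative-algebra lemmas on the behaviour of characteristic ideals modulo height-one primes, and it also requires knowing that the specialized analytic object is the Chida--Hsieh $p$-adic $L$-function normalized by the Gross period $\Omega_{f,N^-}$, not the Skinner--Urban period. The paper does this in stages: twist by the critical character $\Theta^{-1}$ and restrict first to the self-dual (anticyclotomic $\times$ weight) plane using self-duality of $T_{\mathbf{f}}^\dagger$, \cite[Lemma~6.1.2]{Rubin-ES} and a variant of \cite[Prop.~3.9]{SU}; then identify the restricted $L$-function with the quaternionic two-variable $L$-function up to explicit Euler factors, via \cite[(12.3)]{SU} at weight-two arithmetic primes, the period comparison of \cite[Thm.~6.8]{pollack-weston} (this is where Assumption~\SU\ enters), density of such primes, Theorem~\ref{thm:3.6.2} and \cite{GV}; and only then specialize at $\wp_f$ using Theorem~\ref{thm:3.4.3} and Theorem~\ref{thm:mu-alg}. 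You correctly flag the period comparison as ``the main obstacle,'' but the proposal supplies none of these ingredients, so as it stands it is a plan rather than a proof.
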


\begin{proof}
This follows from specializing the divisibility in \cite[Thm.~3.26]{SU} to the anticyclotomic line.
Indeed, let $\mathbf{f}=\sum_{n\geqslant 1}\mathbf{a}_n(\mathbf{f})q^n\in\mathbb{I}\pwseries{q}$
be the $\Lambda$-adic form with coefficients in $\mathbb{I}:=\mathbb{T}(\mathfrak{a})^\circ$
associated with the branch of the Hida family containing 
$f$, 
let $\Sigma$ be a finite set of primes as in Section~\ref{subsec:2var-branch}, let $\Sigma'\supseteq\Sigma$
be a finite set of primes of $K$ containing $\Sigma$ and all primes dividing $pN(\mathfrak{a})D_K$,
and assume that $\Sigma'$ contains at least one prime $\ell\neq p$ that splits in $K$.
Under these assumptions, in \cite[Thm.~3.26]{SU} it is shown that
\begin{equation}\label{eq:SU}
(\mathfrak{L}_p^{\Sigma'}(\mathbf{f}/K))\supseteq
Ch_{\Lambda_{\mathbf{f}}(L_\infty)}(\mathfrak{Sel}^{\Sigma'}(L_\infty,A_{\mathbf{f}})^\vee),
\end{equation}
where $L_\infty=K_\infty K_{\rm cyc}$ is the $\Z_p^2$-extension of $K$, 
$\Lambda_{\mathbf{f}}(L_\infty)$ is the three-variable Iwasawa algebra
$\mathbb{I}\pwseries{{\rm Gal}(L_\infty/K)}$, and $\mathfrak{L}_p^{\Sigma'}(\mathbf{f}/K)$
and $\mathfrak{Sel}^{\Sigma'}(L_\infty,A_{\mathbf{f}})$ are the ``$\Sigma'$-primitive'' $p$-adic $L$-function and Selmer group
defined in \cite[\S{3.4.5}]{SU} and \cite[\S\S{3.1.3}, 3.1.10]{SU}, respectively.

Recall the character $\Theta:G_\Q\rightarrow\Z_p[[1+p\Z_p]]^\times$ from Section~\ref{subsec:crit}, regarded
as a character on ${\rm Gal}(L_\infty/K)$, and let
\[
{\rm Tw}_{\Theta^{-1}}:\Lambda_{\mathbf{f}}(L_\infty)\longrightarrow\Lambda_{\mathbf{f}}(L_\infty)
\]
be the $\mathbb{I}$-linear isomorphism induced by ${\rm Tw}_{\Theta^{-1}}(g)=\Theta^{-1}(g)g$ for
$g\in{\rm Gal}(L_\infty/K)$. Choose a topological generator $\gamma\in{\rm Gal}(K_{\rm cyc}/K)$, and expand
\[
{\rm Tw}_{\Theta^{-1}}(\mathfrak{L}_p^{\Sigma'}(\mathbf{f}/K))=
\mathfrak{L}_{p,0}^{\Sigma'}(\mathbf{f}/K)+\mathfrak{L}_{p,1}^{\Sigma'}(\mathbf{f}/K)(\gamma-1)+\cdots
\]
with $\mathfrak{L}_{p,i}^{\Sigma'}(\mathbf{f}/K)\in\Lambda_{\mathbf{f}}(K_\infty)=\mathbb{I}[[\Gamma]]$.
In particular, note that $\mathfrak{L}_{p,0}^{\Sigma'}(\mathbf{f}/K)$ is the restriction of the twisted
three-variable $p$-adic $L$-function ${\rm Tw}_{\Theta^{-1}}(\mathfrak{L}_p^{\Sigma'}(\mathbf{f}/K))$ to the ``self-dual'' plane.

Because of our assumptions on $f$, the $\Lambda$-adic form $\mathbf{f}$ has trivial tame character, and
hence denoting by ${\rm Frob}_\ell$ an arithmetic Frobenius at any prime $\ell\nmid N(\mathfrak{a})p$,
the Galois representation
\[
\rho(\mathfrak{a}):G_\Q\longrightarrow{\rm GL}(T_{\mathbf{f}})\simeq{\rm GL}_2(\mathbb{T}(\mathfrak{a})^\circ)
\]
considered in $\S\ref{sec:branches}$ (which is easily seen to agree with the twisted representation
considered in \cite[p.37]{SU}) is such that
\[
{\rm det}(X-{\rm Frob}_\ell\vert T_{\mathbf{f}})=X^2-\mathbf{a}_\ell(\mathbf{f})X+\Theta^2(\ell)\ell.
\]
The twist $T_{\mathbf{f}}^\dagger:=T_{\mathbf{f}}\otimes\Theta^{-1}$ is therefore self-dual. Thus combining
\cite[Lemma~6.1.2]{Rubin-ES} with a straightforward variant of \cite[Prop.~3.9]{SU}
having ${\rm Gal}(K_\infty/K)$ in place of ${\rm Gal}(K_{\rm cyc}/K)$, we see that divisibility $(\ref{eq:SU})$
implies that
\begin{equation}\label{eq:SU-}
(\mathfrak{L}_{p,0}^{\Sigma'}(\mathbf{f}/K))\supseteq
Ch_{\Lambda_{\mathbf{f}}(K_\infty)}(\mathfrak{Sel}^{\Sigma'}(K_\infty,A_{\mathbf{f}}^\dagger)^\vee).
\end{equation}
(Here, as above, $A_{\mathbf{f}}$ denotes the Pontryagin dual
$T_{\mathbf{f}}\otimes_{\mathbb{I}}{\rm Hom}_{\rm cts}(\mathbb{I},\Q_p/\Z_p)$, and $A_{\mathbf{f}}^\dagger$
is the corresponding twist.) We next claim that, setting $\Sigma'':=\Sigma'\smallsetminus\Sigma$, we have
\begin{equation}\label{eq:comp}
(\mathfrak{L}_{p,0}^{\Sigma'}(\mathbf{f}/K))=(L_\Sigma(\bar\rho,\mathfrak{a})\cdot
\prod_{\substack{v\in\Sigma''\\v\nmid p}}E_v(\mathfrak{a})),
\end{equation}
where $L_\Sigma(\bar\rho,\mathfrak{a})$ is the two-variable $p$-adic $L$-function
constructed in Section~\ref{subsec:2varL}, and if $v$ lies over the rational prime $\ell$,
$E_v(\mathfrak{a})$ is the Euler factor given by
\[
E_v(\mathfrak{a})=\det({\rm Id}-{\rm Frob}_vX\vert(V_{\mathbf{f}}^\dagger)_{I_v})_{X=\ell^{-1}{\rm Frob}_v},
\]
where $V_{\mathbf{f}}:=T_{\mathbf{f}}\otimes_{\mathbb{I}}{\rm Frac}(\mathbb{I})$,
and ${\rm Frob}_v$ is an arithmetic Frobenius at $v$.
(Note that for $\ell=\mathfrak{l}\bar{\mathfrak{l}}$ split in $K$,
$E_{\mathfrak{l}}(\mathfrak{a})\cdot E_{\bar{\mathfrak{l}}}(\mathfrak{a})$ is simply the Euler factor (\ref{def:e-a}).)
Indeed, combined with Theorem~\ref{thm:3.6.2}
and Theorem~\ref{thm:3.4.3}, equality $(\ref{eq:comp})$ specialized to any arithmetic prime
$\wp\subseteq\mathbb{T}(\mathfrak{a})$ of weight $2$ is shown in \cite[(12.3)]{SU}, from where the claim
follows easily from the density of these primes. (See also \cite[Thm.~6.8]{pollack-weston} for the
comparison between the different periods involved in the two constructions, which differ by a $p$-adic unit under our assumptions.)

Finally, $(\ref{eq:SU-})$ and $(\ref{eq:comp})$ combined with Theorem~\ref{thm:3.6.2} and \cite[Props.~2.3,8]{GV}
imply that
\[
(L(\bar\rho,\mathfrak{a}))\supseteq
Ch_{\Lambda_{\mathbf{f}}(K_\infty)}(\mathfrak{Sel}(K_\infty,A^\dagger_{\mathbf{f}})^\vee),
\]
from where the result follows by specializing at $\wp_f$ using Theorem~\ref{thm:3.4.3}
and Theorem~\ref{thm:mu-alg}.
\end{proof}

In the opposite direction, we have the following result:

\begin{theorem}[Bertolini--Darmon]\label{thm:BD}
Let $f=\sum_{n=1}^\infty a_n(f)q^n$ be a $p$-ordinary newform of weight $2$, level $N$, and trivial nebentypus.
Suppose that $\bar{\rho}_f$ satisfies Assumption~(CR) and that
\begin{equation}\label{eq:PO}
a_p(f)\not\equiv\pm{1}\pmod{p}.\tag{PO}
\end{equation}
Then
\[
(L_p(f/K))\subseteq Ch_{\Lambda}({\rm Sel}(K_\infty,f)^\vee).
\]
\end{theorem}

\begin{proof}
This is the main result of \cite{bdIMC}, as extended by Pollack--Weston \cite{pollack-weston} to
newforms of weight $2$ not necessarily defined over $\Q$ and under the stated hypotheses (weaker
that in \cite{bdIMC}) on $\bar\rho_f$. See also \cite{KPW} for a detailed discussion on the additional
``non-anomalous'' hypothesis (PO) on $f$.
\end{proof}


Before we combine the previous two theorems with our main results in this paper, we note that
condition~(PO) in Theorem~\ref{thm:BD} can be phrased in terms of the Galois representation $\rho_f$
associated to $f$. Indeed, let $f=\sum_{n=1}^\infty a_n(f)q^n$ be a $p$-ordinary newform as above,
defined over a finite extension $F/\Q_p$ with ring of integers $\cO$. Then we have
\[
\rho_f\vert_{D_p}\simeq\left(\begin{smallmatrix}\varepsilon&*\\0&\delta\end{smallmatrix}\right)
\]
on a decomposition group $D_p\subseteq G_\Q$ at $p$, with $\delta:D_p\rightarrow\cO^\times$ an unramified
character sending ${\rm Frob}_p$ to the unit root $\alpha_p$ of $X^2-a_p(f)X+p$. Since clearly
$\alpha\equiv a_p(f)\pmod{p}$, we see that condition~(PO) amounts to the requirement that
\begin{equation}\label{eq:PO2}
\delta({\rm Frob}_p)\not\equiv\pm{1}\pmod{p}.\tag{PO}
\end{equation}

Now we are finally in a position to prove our main application to the anticyclotomic Iwasawa main conjecture
for $p$-ordinary newforms.

\begin{corollary}\label{coro:5.4}
Suppose that $\bar\rho$ satisfies Assumptions~\SU~and (PO) and that $p$ splits in $K$, and let $f$ be a newform
in $\mathcal{H}^-(\bar\rho)$ of weight $k\equiv 2\pmod{p-1}$ and trivial nebentypus. Then
the anticyclotomic Iwasawa main conjecture holds for $f$.
\end{corollary}

\begin{proof}
After Theorem~\ref{thm:variation} and Theorem~\ref{thm:SU}, to check the anticyclotomic main conjecture for
\emph{any} newform $f$ as in the statement, it suffices to check the three equalities
\begin{equation}\label{Iw}
\mu({\rm Sel}(K_\infty,f_0))=\mu(L_p(f_0/K))=0\quad\textrm{and}\quad
\lambda({\rm Sel}(K_\infty,f_0))=\lambda(L_p(f_0/K)).
\end{equation}
holds \emph{some} $f_0\in\mathcal{H}^-(\bar\rho)$ of weight $k\equiv 2\pmod{p-1}$ and trivial nebentypus.

Let $\mathbb{T}(\mathfrak{a})$ be the irreducible component of $\mathcal{H}^-(\bar\rho)$ containing $f$,
and let $f_0\in S_2(\Gamma_0(Np))$ be the $p$-stabilized newform corresponding to an arithmetic prime $\wp\subseteq\mathbb{T}(\mathfrak{a})$
of weight $2$ and trivial nebentypus. By Assumption~(PO), the form $f_0$ is necessarily
the $p$-stabilization of a $p$-ordinary newform $f_0^\sharp\in S_2(\Gamma_0(N))$
(see e.g. \cite[Lemma~2.1.5]{howard-invmath}). From the combination of Theorem~\ref{thm:SU} and Theorem~\ref{thm:BD},
the anticyclotomic Iwasawa main conjecture holds for $f_0^\sharp$, and since we clearly have
\[
L_p(f_0/K)=L_p(f_0^\sharp/K)\quad\textrm{and}\quad
{\rm Sel}(K_\infty,f_0)\simeq{\rm Sel}(K_\infty,f_0^\sharp)
\]
(note that the latter isomorphism relies on the absolute irreducibility of $\bar\rho$),
the anticyclotomic Iwasawa main conjecture holds for $f_0$ as well. In particular, equalities $(\ref{Iw})$ hold for this $f_0$,
and the result follows.
\end{proof}


%


\bibliographystyle{amsalpha}
\bibliography{paper}
\end{document}